\theoremstyle{dgthm}
\newtheorem{theorem}{Theorem}
\newtheorem{lemma}[theorem]{Lemma}
\newtheorem{assumption}[theorem]{Assumption}
\newtheorem{problem}[theorem]{Problem}
\theoremstyle{dgdef}
\newtheorem{remark}[theorem]{Remark}
\def\div{\operatorname{div}}
\def\RR{\mathbb{R}}
\def\LL{\mathbf{L}}
\def\Th{\mathcal{T}_h}
\def\Itau{\mathcal{I}_\tau}
\def\Id{\mathbf{I}}
\def\la{\langle}
\def\ra{\rangle}
\def\Th{\mathcal{T}_h}
\def\Vh{\mathcal{V}_h}
\DeclarePairedDelimiter{\norm}{\|}{\|}
\DeclarePairedDelimiter{\snorm}{|}{|}
\newcommand{\na}{\nabla}
\def\u{\mathbf{u}}
\def\na{\nabla}
\def\ww{\mathbf{w}}
\def\dt{\partial_t}
\def\ddt{\frac{\mathrm{d}}{\mathrm{d}t}}
\def\changesone#1{{#1}}
\def\changestwo#1{{#1}}
\begin{document}

	\articletype{Research Article}
  \startpage{1}

\title{Variational approximation for a non-isothermal coupled phase-field system: \\  Structure-preservation \& Nonlinear stability }
\runningtitle{Non-isothermal coupled phase-field models}

\author*[1]{Aaron Brunk}
\author[2]{Oliver Habrich}
\author[3]{Timileyin David Oyedeji} 
\author[3]{Yangyiwei Yang} 
\author[3]{Bai-Xiang Xu} 
\runningauthor{A.~Brunk et al.}
\affil[1]{\protect\raggedright 
Institute of Mathematics, Johannes Gutenberg-University Mainz, Germany, e-mail: abrunk@uni-mainz.de}
\affil[2]{\protect\raggedright 
Institute for Numerical Mathematics, Johannes Kepler University Linz, Austria, e-mail:  oliver.habrich@jku.at}
\affil[3]{\protect\raggedright 
Mechanics of Functional Materials Division, Technical University Darmstadt, Germany, e-mail: timileyin.oyedeji@tu-darmstadt.de,
yangyiwei.yang@mfm.tu-darmstadt.de, xu@mfm.tu-darmstadt.de}	
	
\abstract{A Cahn-Hilliard-Allen-Cahn phase-field model coupled with a heat transfer
equation, particularly with full non-diagonal mobility matrices, is studied. After reformulating the problem w.r.t. the inverse of temperature, we proposed and analysed a structure-preserving approximation for the semi-discretisation in space and then a fully discrete approximation using conforming finite elements and time-stepping methods. We prove structure-preserving property and discrete stability using relative entropy methods for the semi-discrete and fully discrete case. The theoretical results are illustrated by numerical experiments.}

\keywords{non-isothermal phase-field, cross-diffusion, finite elements, entropy stable}

\maketitle

\section{Introduction}

Non-isothermal sintering involves the densification of packed powders into bulk materials under non-isothermal conditions \cite{german2014sintering, kang2004}. Thus, the properties of sintered products are influenced by both conventional factors, such as the size and chemical composition of powders and pressure, and also non-isothermal factors, such as temperature inhomogeneity and heating/cooling rate. Understanding the interplay between these factors and the different physical effects in the non-isothermal sintering process is crucial to bridge process parameters, microstructure, and properties of the sintered materials \cite{Timi23}. In this regard, phase-field models have been widely employed to study the complex microstructure evolution and the intricate multi-physics in non-isothermal sintering \cite{Yang2019, yangscripta2020}. 

In a typical phase-field sintering model, a conserved order parameter $\rho$ is used to differentiate the grain and the pore/atmosphere regions, and a set of non-conserved order parameters $\eta_i$ are used to represent the orientations of the grains. While the temporal evolution of $\rho$ is governed by a form of the Cahn-Hilliard equation describing mass transfer, the evolution of $\eta_i$ is governed by a form of the Allen-Cahn equation depicting grain growth \cite{Kazaryan1999, wang2006}. Despite its extensive use, a major theoretical issue of conventional phase-field models is the quantitative validity associated with artificial interface effects like violation of conservation laws and discontinuity of the chemical/thermal potentials at the interface \cite{echebarria2004}. Variational quantitative phase-field models like in \eqref{eq:s1}--\eqref{eq:s3} are formulated to contain cross-coupling terms that are essential for the elimination of artificial interface effects \cite{brener2012, Boussinot2013}.\\
In this work, we consider a temperature dependent Cahn-Hilliard-Allen-Cahn system with cross-coupling given by
\begin{alignat}{2}
\dt\rho &= \div\Big(\LL_{11}\nabla\tfrac{\mu_\rho}{T} - \LL_{12}\nabla\tfrac{1}{T}  +  \LL_{13}\tfrac{\mu_\eta}{T}\Big), \qquad &\tfrac{\mu_\rho}{T} &= -\gamma_\rho\Delta\rho + \tfrac{1}{T}\tfrac{\partial f}{\partial\rho}(\rho,T,\eta), \label{eq:s1}\\
\dt e(\rho,T,\eta) &= \div\Big( \LL_{12}\nabla\tfrac{\mu_\rho}{T} -  \LL_{22}\nabla\tfrac{1}{T} + \LL_{23}\tfrac{\mu_\eta}{T}  \Big),\label{eq:s2} \\
\dt\eta &= -\LL_{13}\cdot\nabla\tfrac{\mu_\rho}{T} + \LL_{23}\nabla\tfrac{1}{T}  - \LL_{33}\tfrac{\mu_\eta}{T}, \qquad & \tfrac{\mu_\eta}{T} &= -\gamma_\eta\Delta\eta + \tfrac{1}{T}\tfrac{\partial f}{\partial\eta}(\rho,T,\eta).\label{eq:s3}
\end{alignat}
The system is complemented by periodic boundary conditions and suitable initial conditions and endowed with a free energy density given by \begin{align*}
f(\rho,\eta,T) = \tilde f(\rho,\eta,T) + \tfrac{T\gamma_\rho}{2}\snorm{\nabla\rho}^2+ \tfrac{T\gamma_\eta}{2}\snorm{\nabla\eta}^2.
\end{align*}
From this energy density $f$ by standard arguments, we obtain the internal energy density $e$ and the entropy density $s$. 

The above model is a mathematical simplification of the phase-field model for non-isothermal sintering \cite{Timi23}, i.e. only one $\eta$ is employed instead of a whole set. However, the structure can be translated into the complete model in a straightforward fashion. 
In a broader context, there have been several different generalisations of phase-field models into the non-isothermal setting; see \cite{Charach1998,Fabrizio2006,Pawlow2016} for some general ansatz and a quite intensive overview. They typically deal either with a conserved or non-conserved phase-field, i.e. Allen-Cahn or Cahn-Hilliard type.
The well-known generalisations are for the non-conserved phase-field introduced by Penrose and Five \cite{Penrose1990}, while for a conserved phase-field, a similar model is introduced by Pawlow and Alt \cite{Alt1992} where the system driving force is the entropy. Both models and related systems can also be derived within the well-known GENERIC or SNET formalism, see \cite{Gladkov2016}. Modelling approaches based on the micro-force balance in the spirit of Gurtin can be found in \cite{Marveggio2021} or using the least action principle in \cite{deanna2022temperature}. Similar models are derived by Caginalp using finite prorogation of temperature and afterwards extended see \cite{Caginalp1986,colli2022cahnhilliard}.\\
%
Many existence results, weak or classical, are attributed to Colli and co-workers for various of the above models, see \cite{Colli_memory_smooth,Colli_memory_weak,Colli_AC,Colli_memory,Colli_Penrose,Colli_nonlinear_multi,colli1998weak}.
Existence results and some further analysis for the model of Pawlow and Alt can be found in \cite{Alt1990,Alt1991,Alt1992,KENMOCHI19941163,zheng1992global}. We note that only the work of Pawlow and Alt \cite{Alt1992,alt1992existence} deal with a cross-coupled system, where the maximum principle is expected to be lost, while most other results heavily rely on this principle. \changesone{In principle, they are able to prove existence of generalized solutions for the non-isothermal Cahn-Hilliard model using inverse temperature as main variable, i.e. system \eqref{eq:s1}--\eqref{eq:s2}, using suitable growth conditions on the driving potential, cf. \cite{alt1992existence}. The solutions are governed by the usual weak formulation, but with an defect measure in the equation for the internal energy, due to the weak limit in the internal energy itself. In principle, we expect that the proof can be extended to the model in consideration here, \eqref{eq:s1}--\eqref{eq:s2}.}\\

For a numerical approximation of such models a purely implicit Euler method was introduced by \cite{Pawlow2016}, while Gonzalez's and coworkers \cite{GuillnGonzlez2009,GonzalezFerreiro2014} generalised this to an energy-stable method via the average vector field methods \cite{McLachlanEtAl99,Gonzales96}. The system in consideration, as well as suitable subsystems, are endowed with a gradient structure. While for the Cahn-Hilliard and Allen-Cahn equations, the typical methods are convex-concave splitting \cite{Jie}, scalar auxiliary variable approach (SAV) \cite{SHEN2018407,LI23,Akrivis19,CHEN2019} and Energy Quadratisation (EQ) \cite{GONG2019,Yang2020,CHEN2022,Zhang2022}, where the latter two relax the driving functional energy or entropy by introduction of another variable, either only time or space-time-dependent. For the internal energy equation, some works use Energy Quadratisation, see \cite{Guo_2015,Sun2020}.\\

The aim of this work is to study the variational properties of the above system and its systematic discretisation in space and time. We extend the classical convex-concave splitting from the isothermal case \cite{Jie} to the non-isothermal regime. The novelties of this manuscript are
\begin{itemize}
\item Problem-adapted reformulation w.r.t. the inverse of the temperature to reveal the variational structure
    \item Structure-preserving approximation via suitable application of standard discretisations in space and afterwards also in time.
    \item Nonlinear stability estimate for the semi-discrete and the fully discrete scheme by means of relative entropy.
\end{itemize} 

\changesone{We underline that fundamentally, both numerical approximations—the semi-discrete and the fully discrete—can potentially form a foundation for establishing the existence of generalized solutions through appropriate weak limits. While both discretizations maintain the thermodynamic structure, ensuring the conservation of mass and internal energy, as well as entropy production, they also fulfill the nonlinear stability estimate via the relative entropy method. This nonlinear stability estimate, analogous to the well-established weak-strong uniqueness principle in the continuous case, can be utilized for investigating convergence and error estimates. We anticipate that a similar stability estimate can be inferred for the generalized solutions leading to a weak-strong uniqueness principle. This aspect is currently under investigation and necessitates significant adjustments to the proofs presented in the manuscript.}
\changestwo{We observe that the semi-discretization, along with its associated nonlinear stability estimate, demonstrates that discretization using conforming finite elements in space maintains the intrinsic structure of the model, including thermodynamic consistency and stability as indicated by the relative entropy method. Essentially, this formulation allows for the development of time discretization, and we suggest a relatively simple time discretization method based on classical convex-concave splitting. However, the semi-discretization can serve as a promising starting point for investigating alternative time discretization approaches without concerning space discretisation. Another advantage of the space discretization lies in the straightforward proof of the stability estimate and illustrates what kind of estimate is to be expected. Conversely, in the fully discrete scheme, this structure is complicated by the non-conforming approximation in time, resulting in more technical estimates as detailed in Appendix \ref{app:1}.}

The manuscript is structured as follows. In Section \ref{sec:not}, we will reformulate the system by introducing the inverse temperature as a main variable instead of the typically used temperature. Afterwards, in Section \ref{sec:semidiscrete}, we will use the reformulated system to introduce a semi-discretisation in space using piecewise linear conforming finite elements and a nonlinear stability estimate, which is proven in Section \ref{sec:proof_stab_semi}. In Section \ref{sec:fulldiscrete}, the fully discrete scheme using implicit time discretisation and a convex-concave splitting of the driving potential is proposed and analysed, and a nonlinear stability results is given, which is proven in the Appendix \ref{app:1}. Finally, we present a convergence test and illustrate the evolution of the system through an application-inspired example.

\section{Notation \& auxiliary results}\label{sec:not}

Before we present our discretisation method and main results in detail, let us briefly introduce our notation and main assumptions, and recall some basic facts.

\subsection{Notation}
The system \eqref{eq:s1}--\eqref{eq:s3} is investigated on a finite time interval $(0,T_f)$. 
To avoid the discussion of boundary conditions, we consider a spatially periodic setting, i.e., 
\begin{itemize}
\item[(A0)] $\Omega \subset \RR^d$, $d=2,3$ is a cube and identified with the $d$-dimensional torus $\mathcal{T}^d$.\\
Moreover, functions on $\Omega$ are assumed to be periodic throughout the paper. 
\end{itemize}
By $L^p(\Omega)$, $W^{k,p}(\Omega)$, we denote the corresponding Lebesgue and Sobolev spaces of periodic functions with norms $\norm{\cdot}_{L^{p}}$ and $\norm{\cdot}_{W^{k,p}}$. 
As usual, we abbreviate $H^k(\Omega)=W^{k,2}(\Omega)$ and write $\norm{\cdot}_{H^{k}} = \norm{\cdot}_{W^{k,2}}$. 
The corresponding dual spaces are denoted by $H^{-k}(\Omega)=H^k(\Omega)^*$, with norm defined by
\begin{align} \label{eq:dualnorm}
    \norm{r}_{H^{-k}} = \sup_{v \in H^s(\Omega)} \frac{\la r, v\ra}{\|v\|_{k}}.
\end{align}
Here $\langle \cdot, \cdot\rangle$ denotes the duality product on $H^{-k}(\Omega) \times H^k(\Omega)$.
The same symbol is also used for the scalar product on $L^2(\Omega)$, which is defined by
\begin{align*}
\la u, v \ra = \int_\Omega u \cdot v \, dx \qquad \forall u,v \in L^2(\Omega).    
\end{align*}
By $L^2_0(\Omega) \subset L^2(\Omega)$, we denote the spaces of square-integrable functions with zero average.
As usual, we denote by $L^p(a,b;X)$, $W^{k,p}(a,b;X)$, and
$H^k(a,b;X)$, the Bochner spaces of integrable or differentiable functions on the time interval $(a,b)$ with values in some Banach space $X$. If $(a,b)=(0,T)$, we omit reference to the time interval and briefly write $L^p(X)$. The corresponding norms are denoted, e.g., by $\|\cdot\|_{L^p(X)}$ or $\|\cdot\|_{H^k(X)}$.
\section{Variational structure and reformulation}
To have a physically meaningful model, we consider basic physical properties, in this case, conservation of mass, internal energy and entropy production.
For the investigation, we recall that the entropy and internal energy densities are defined as follows
\begin{align*}
s(\rho,\eta,T)&=-\tfrac{\partial f(\rho,\eta,T)}{\partial T}=   -\tfrac{\partial \tilde f(\rho,\eta,T)}{\partial T} - \tfrac{\gamma_\rho}{2}\snorm{\nabla\rho}^2- \tfrac{\gamma_\eta}{2}\snorm{\nabla\eta}^2  , \\  
e(\rho,\eta,T) &= f(\rho,\eta,T) + T s(\rho,\eta,T) = \tilde f(\rho,\eta,T) - T\tfrac{\partial \tilde f(\rho,\eta,T)}{\partial T}.
\end{align*}
Finally, we observe that the entropy can be rewritten as
\begin{equation}
s(\rho,\eta,T) =  \tfrac{e(\rho,\eta,T)}{T} - \tfrac{1}{T}\tilde f(\rho,\eta,T)  - \tfrac{\gamma_\rho}{2}\snorm{\nabla\rho}^2- \tfrac{\gamma_\eta}{2}\snorm{\nabla\eta}^2. \label{eq:entrophelp}
\end{equation}
\begin{remark}
 Following the typical thermodynamic derivations, the Helmholtz free energy $f$ and the temperature $T$ are introduced via the free variable $e$ and the function $s(e)$. Following the computations in Penrose and Five \cite{Penrose1990}, one can see that $\frac{\delta}{\delta \phi} (\frac{f}{T}) = -\frac{\delta}{\delta \phi} s$, hence the main driving force of the system is the entropy.
\end{remark}
\noindent 
\textbf{Weak formulation \& conservation}
The weak form of the problem is obtained by testing the equations with appropriate test functions and using integration-by-parts for some terms. This leads to the variational equations,
\begin{align*}
\la \dt \rho,v_1\ra + \la \LL_{11}\nabla\tfrac{\mu_\rho}{T} - \LL_{12}\nabla\tfrac{1}{T}  +  \LL_{13}\tfrac{\mu_\eta}{T}, \nabla v_1\ra &= 0 \\
\la \tfrac{\mu_\rho}{T}, v_2\ra -\la \gamma_\rho \nabla \rho,\nabla v_2\ra - \la \tfrac{f_\rho}{T}, v_2\ra&=0  \\
\la \dt e(\rho,\eta,T), v_3\ra + \la \LL_{12}\nabla\tfrac{\mu_\rho}{T} -  \LL_{22}\nabla\tfrac{1}{T} + \LL_{23}\tfrac{\mu_\eta}{T}, \nabla v_3\ra &= 0 \\
\la \dt \eta, w_1\ra + \la \LL_{13}\cdot\nabla\tfrac{\mu_\rho}{T} - \LL_{23}\nabla\tfrac{1}{T}  + \LL_{33}\tfrac{\mu_\eta}{T}, w_1\ra &= 0 \\
\la \tfrac{\mu_\eta}{T}, w_2 \ra -\la \gamma_\eta\nabla\eta,\nabla w_2 \ra - \la \tfrac{f_\eta}{T}, w_2 \ra&=0
\end{align*}
which hold for all sufficiently smooth periodic test functions $v_1,v_2,v_3$ and $w_1,w_2$.
The internal energy $\int_\Omega e(\rho,\eta,T) $ and the total mass $\int_\Omega \rho$ are then automatically conserved by testing with $v_1=1$ and $\xi=1$ respectively.

\bigskip 
\noindent 
\textbf{Entropy production.}
For the evolution of the entropy we use \eqref{eq:entrophelp} and insert $(v_1,v_2,\xi,w_1,w_2)=(0,\dt\rho,0,0,\dt\eta)$ and $(v_1,v_2,\xi,w_1,w_2)=(-\mu_\rho,0,\frac{1}{T},\mu_\eta,0)$, to obtain
\begin{align*}
\ddt\la s(\rho,\eta,T),1\ra 
=& \la \dt e,\tfrac{1}{T} \ra + \la e(\rho,\eta,T) -f(\rho,\eta,T)+T\tfrac{\partial f(\rho,\eta,T)}{\partial T},\dt(\tfrac{1}{T}) \ra \\
&-  \gamma_\rho\la \nabla\rho,\nabla\dt\rho \ra - \la \partial_\rho\tilde f,\dt\rho \ra - \gamma_\eta\la \nabla\eta,\nabla\dt\eta \ra - \la \partial_\eta\tilde f,\dt\eta \ra\\
=&  \la \dt e,\tfrac{1}{T} \ra - \la \dt \rho,\tfrac{\mu_\rho}{T} \ra - \la \dt \eta,\tfrac{\mu_\eta}{T} \ra \\
=& - \la \LL_{12}\na\tfrac{\mu_\rho}{T}, \na\tfrac{1}{T}\ra + \la \LL_{22}\na\tfrac{1}{T}, \na\tfrac{1}{T}\ra - \la \tfrac{\mu_\eta}{T}\LL_{23}, \na\tfrac{1}{T}\ra \\
&+ \la \LL_{11}\na\tfrac{\mu_\rho}{T}, \na\tfrac{\mu_\rho}{T}\ra - \la \LL_{12}\na\tfrac{1}{T}, \na\tfrac{\mu_\rho}{T}\ra + \la \tfrac{\mu_\eta}{T}\LL_{13}, \na\tfrac{\mu_\rho}{T}\ra \\
&+ \la \LL_{13}\na\tfrac{\mu_\rho}{T}, \tfrac{\mu_\eta}{T}\ra - \la \LL_{23}\na\tfrac{1}{T}, \tfrac{\mu_\eta}{T}\ra + \la \LL_{33}\tfrac{\mu_\eta}{T}, \tfrac{\mu_\eta}{T}\ra \\
=&  \int_\Omega \begin{pmatrix} \na\tfrac{\mu_\rho}{T} \\ \na\tfrac{1}{T} \\ \tfrac{\mu_\eta}{T}\end{pmatrix}^\top\begin{pmatrix} \LL_{11} & -\LL_{12} & \LL_{13}\\
-\LL_{12} & \LL_{22} & -\LL_{23}\\
\LL_{13}^\top\ & -\LL_{23}^\top\ & \LL_{33}\\
\end{pmatrix}\begin{pmatrix} \na\tfrac{\mu_\rho}{T} \\ \na\tfrac{1}{T} \\ \tfrac{\mu_\eta}{T}\end{pmatrix}.
\end{align*}

Entropy production can then be guaranteed by assuming that the mobility/conductivity matrix $\LL$ is positive (semi-)definite. 

\bigskip 
\noindent 
\textbf{Variable transformation.}
The weak formulation and entropy-production equality suggests to introduce new variables 
\begin{align*}
\theta=1/T, \qquad \tilde\mu_\rho=\mu_\rho/T, \qquad \tilde\mu_\eta = \mu_\eta/T. 
\end{align*} 
To this end, we consider reduced free energy
\begin{align*}
 \Phi(\rho,\theta,\eta) := \theta f(\rho,\tfrac{1}{\theta},\eta) = \psi(\rho,\theta,\eta) + \frac{\gamma_\rho}{2}\snorm{\nabla\rho}^2  + \frac{\gamma_\eta}{2}\snorm{\nabla\eta}^2.    
\end{align*}
With this, we can compute the entropy and internal energy in the new variables and find
\begin{align*}
  \tilde e(\rho,\theta,\eta) &= \frac{\partial\Phi(\rho,\theta,\eta)}{\partial\theta} = \frac{\psi(\rho,\theta,\eta)}{\partial\theta}  , \\
  \tilde s(\rho,\theta,\eta) &= \theta\frac{\partial\Phi(\rho,\theta,\eta)}{\partial\theta} - \Phi(\rho,\theta,\eta) = \theta\frac{\psi(\rho,\theta,\eta)}{\partial\theta}  - \psi(\rho,\theta,\eta) - \frac{\gamma_\rho}{2}\snorm{\nabla\rho}^2  - \frac{\gamma_\eta}{2}\snorm{\nabla\eta}^2
\end{align*}

Again, we express the entropy in terms of the internal energy and find
\begin{align}
\tilde s(\rho,\theta,\eta) = \theta \tilde e(\rho,\theta,\eta)  - \psi(\rho,\theta,\eta) - \frac{\gamma_\rho}{2}\snorm{\nabla\rho}^2  - \frac{\gamma_\eta}{2}\snorm{\nabla\eta}^2  \label{eq:entropy}
\end{align}

Using this the rewritten system in weak formulation reads
\begin{align}
\la \dt\rho,v_1 \ra &+ \la \LL_{11}\nabla \tilde\mu_\rho,\nabla v_1 \ra - \la \LL_{12}\nabla\theta,\nabla v_1 \ra + \la \tilde\mu_\eta\LL_{13},\nabla v_1 \ra =0, \label{eq:noniso1}\\   
\la \tilde\mu_\rho,v_2 \ra &- \gamma_\rho\la \nabla\rho,\nabla v_2 \ra - \la  \partial_\rho\psi,v_2 \ra = 0, \label{eq:noniso2}\\
\la \dt \tilde e(\rho,\theta,\eta),\xi \ra &+ \la \LL_{12}\nabla \tilde\mu_\rho,\nabla \xi \ra - \la \LL_{22}\nabla\theta,\nabla \xi \ra + \la \tilde\mu_\eta\LL_{23},\nabla \xi \ra =0, \label{eq:noniso3}\\   
\la \dt\eta,w_1 \ra &+ \la w_1\LL_{13},\nabla\tilde\mu_\rho \ra - \la w_1\LL_{23},\nabla\theta \ra + \la \LL_{33}\tilde\mu_\eta,w_1 \ra = 0,\label{eq:noniso4}\\
\la \tilde\mu_\eta,w_2 \ra &- \gamma_\eta\la \nabla\eta,\nabla w_2 \ra - \la \partial_\eta\psi,w_2 \ra = 0. \label{eq:noniso5}
\end{align}
Furthermore, we will directly drop the superscript, i.e. $\tilde e,\tilde s, \tilde\mu_\rho,\tilde\mu_\eta$ will be denoted by $e,s, \mu_\rho,\mu_\eta$.
The entropy-production now again follows directly by testing with  $(v_1v_2,\xi,w_1,w_2)=(\mu_\rho,\dt\rho,\theta,\mu_\eta,\dt\eta)$ and in accordance with \eqref{eq:entropy} yields
\begin{align*}
\la\dt  s(\rho,\theta,\eta),1\ra =&  \la \dt \theta ,  e(\rho,\theta,\eta)  - \frac{\partial\psi(\rho,\theta,\eta)}{\partial \theta}\ra  + \la \dt \tilde e(\rho,\theta,\eta), \theta \ra \\
&-\gamma_\rho\la \nabla\dt\rho, \nabla\rho \ra -\la \dt\rho,\psi_\rho(\rho,\theta,\eta) \ra   - \gamma_\rho\la \nabla\dt\eta, \nabla\eta \ra - \la \dt\eta, \psi_\eta(\rho,\theta,\eta) \ra \\
 =& \la \dt \theta , e(\rho,\theta,\eta) - \la \dt\rho,\mu_\rho \ra - \la \dt\eta,\mu_\eta \ra \\
  =& \Big\la \begin{pmatrix} \na\mu_\rho \\ \na\theta \\ \mu_\eta\end{pmatrix},\begin{pmatrix} \LL_{11} & -\LL_{12} & \LL_{13}\\
-\LL_{12} & \LL_{22} & -\LL_{23}\\
\LL_{13}^\top\ & -\LL_{23}^\top\ & \LL_{33}\\
\end{pmatrix}\begin{pmatrix} \na\mu_\rho \\ \na\theta \\ \mu_\eta\end{pmatrix} \Big\ra\\
 =&: \mathcal{D}_\LL(\mu_\rho,\theta,\mu_\eta),
\end{align*}
which is non-negative provided that $\LL$ is positive semi-definite.

In the following, we collect the minimal assumptions for the rest of this manuscript.
\begin{assumption}\label{ass}
    
\begin{itemize} \phantom{1}
\item[(A0)] $\Omega \subset \RR^d$, $d=2,3$ is a cube and identified with the $d$-dimensional torus $\mathcal{T}^d$.
    \item[(A1)] The interface parameters $\gamma_\rho,\gamma_\eta\in\RR$ are positive.
    \item[(A2)] The matrix $\LL(\rho,\nabla\rho,\theta,\eta,\nabla\eta)=:\LL\in\mathbb{R}^{(2d+1) \times (2d+1)}$ is assumed to be positive definite such that for the positive constants $\lambda_0,\lambda_1>0$ we have
    \begin{equation*}
      \lambda_1\snorm{\xi}^2 \geq \mathbf{\xi}^\top\LL\mathbf{\xi} \geq \lambda_0\snorm{\xi}^2, \quad \forall \mathbf{\xi}\in\mathbb{R}^{2d+1}
    \end{equation*}
    Furthermore, we assume every component of $\LL$ to be a $C^1(\RR)$ function of all arguments, such that $|\frac{\partial \LL}{\partial \rho}|,|\frac{\partial \LL}{\partial \theta}|,|\frac{\partial \LL}{\partial \eta}|,|\frac{\partial \LL}{\partial \nabla\rho}|,|\frac{\partial \LL}{\partial \nabla\eta}|\leq \LL_3$.
    \item[(A3)] For the smooth driving potential $\psi(\cdot,\cdot,\cdot):\RR\times\RR_+\times\RR\to \RR$ we assume that for every fixed $(\rho,\eta)$ the potential $\psi(\rho,\cdot,\eta):\RR_+\to\RR$ is concave and goes to infinity for $\theta\to 0$. Furthermore, for every fixed $\theta\in\RR_+$ the potential $\psi(\cdot,\theta,\cdot)$ can be decomposed in a convex and a concave part, denoted by $\psi_{vex}, \psi_{cav}.$
    \item[(A4)] We assume that there exists a constant $\alpha\in\RR$ such that $\psi(\rho,\theta,\eta) + \frac{\alpha}{2}(\norm{\rho}_0^2 + \norm{\eta}_0^2)$ is strictly convex for every fixed, positive $\theta$. 
\end{itemize}
\end{assumption}
Note that in order to make physical sense out of the internal energy and entropy, one would require additional properties, especially the growth conditions for $\theta$ approaching to $0$ or $\infty$.

\section{Semi-discrete in space:}\label{sec:semidiscrete}
In this section, we will introduce a finite element discretisation of the weak formulation model \eqref{eq:noniso1}--\eqref{eq:noniso5}.
\subsection*{Space discretisation}

As another preparatory step, let us introduce the relevant notation and assumptions for our discretisation strategy. We require that
\begin{itemize}\itemsep1ex
    \item[(A5)] $\Th$ is a geometrically conforming partition of $\Omega$ into simplices that can be extended periodically to periodic extensions of $\Omega$, where $h$ denotes the maximal diameter of a triangle in $\Vh$.
\end{itemize}
We denote by 
\begin{align*}
    \Vh &:= \{v \in H^1(\Omega)\cap C^0(\bar\Omega) : v|_K \in P_1(K) \quad \forall K \in \Th\},\quad
    \Vh^+ &:= \{v \in \Vh : v(x) > 0,\; \forall x\in\Omega\}
\end{align*}
the space of continuous and piecewise linear functions over $\Th$ and its positive cone. Note that since every value in an element $K$ is deduced from a convex combination of the nodal values, it is equivalent to assume that all nodal values are positive. The semi-discrete formulation then reads.
\begin{problem}\label{prob_semi}
  Let $(\rho_{0,h},\theta_{0,h},\eta_{0,h})\in \Vh\times\Vh^+\times\Vh$ be given. Find $(\rho_h,\theta_h,\eta_h)\in C^1(0,T;\Vh\times\Vh^+\times\Vh)$ and $(\mu_{\rho,h},\mu_{\eta,h}) \in C^0(0,T;\Vh\times\Vh)$  such that $(\rho_h,\theta_h,\eta_h)(0)=(\rho_{0,h},\theta_{0,h},\eta_{0,h})$ and such that for all $v_{1,h}, v_{2,h}, \xi_h, w_{1,h},w_{2,h} \in \Vh$ and all $0 \le t \le T$, there holds 
\begin{align}
\la \dt\rho_h,v_{1,h} \ra &+ \la \LL_{11}\nabla \mu_{\rho,h},\nabla v_{1,h} \ra - \la \LL_{12}\nabla\theta_h,\nabla v_{1,h} \ra + \la \mu_{\eta,h}\LL_{13},\nabla v_{1,h} \ra =0, \label{eq:nonisoh1}\\   
\la \mu_{\rho,h},v_{2,h} \ra &- \gamma_\rho\la \nabla\rho_h,\nabla v_{2,h}\ra - \la  \partial_\rho\psi(\rho_h,\theta_h,\eta_h),v_{2,h} \ra = 0, \label{eq:nonisoh2}\\
\la \dt  e(\rho_h,\theta_h,\eta_h),\xi_h \ra &+ \la \LL_{12}\nabla \mu_{\rho,h},\nabla \xi_h \ra - \la \LL_{22}\nabla\theta_h,\nabla \xi_h \ra + \la \mu_{\eta,h}\LL_{23},\nabla \xi_h \ra =0, \label{eq:nonisoh3}\\   
\la \dt\eta_h,w_{1,h} \ra &+ \la w_{1,h}\LL_{13},\nabla\mu_{\rho,h} \ra - \la w_{1,h}\LL_{23},\nabla\theta \ra + \la \LL_{33}\mu_{\eta,h},w_{1,h} \ra = 0,\label{eq:nonisoh4}\\
\la \mu_{\eta,h},w_{2,h} \ra &- \gamma_\eta\la \nabla\eta_h,\nabla w_{2,h} \ra - \la \partial_\eta\psi(\rho_h,\theta_h,\eta_h),w_{2,h} \ra = 0. \label{eq:nonisoh5}
\end{align}
\end{problem}

\begin{lemma}
   Let (A0)--(A3), (A5) hold, cf. Assumption \ref{ass}. Solution of \eqref{eq:nonisoh1}--\eqref{eq:nonisoh5}. The solutions are mass-, energy-conservative and produce entropy, i.e.
   \begin{align*}
    \la \dt\phi_h, 1\ra= \la \dt e(\rho_h,\theta_h,\eta_h), 1\ra =0, \qquad \la \dt s(\rho_h,\theta_h,\eta_h),1\ra = \mathcal{D}_\LL(\mu_{\rho,h},\theta_h,\mu_{\eta,h})
   \end{align*}
\end{lemma}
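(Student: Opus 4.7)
The plan is to mimic the continuous derivation step by step, exploiting the fact that the piecewise linear Lagrange space $\Vh$ is a \emph{conforming} subspace of $H^1(\Omega)$ that contains all constant functions and that, by construction, the semi-discrete solutions $\mu_{\rho,h},\theta_h,\mu_{\eta,h}$ and the time derivatives $\dt\rho_h,\dt\eta_h$ lie in $\Vh$. These two observations are what allow us to test the discrete equations with exactly the same type of test functions that were used in the continuous computations already carried out in Section~3.

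First I would dispatch mass and internal-energy conservation: testing \eqref{eq:nonisoh1} with $v_{1,h}=1\in\Vh$ kills every gradient term on the right-hand side and yields $\la\dt\rho_h,1\ra=0$; the identical choice $\xi_h=1\in\Vh$ in \eqref{eq:nonisoh3} gives $\la\dt e(\rho_h,\theta_h,\eta_h),1\ra=0$. For entropy production I would use the representation \eqref{eq:entropy} applied at the discrete level, differentiate in time, and exploit $e=\partial_\theta\psi$ to cancel the $\dt\theta_h\cdot e-\psi_\theta\dt\theta_h$ contribution, obtaining
\begin{align*}
\la\dt s(\rho_h,\theta_h,\eta_h),1\ra
=\la\theta_h,\dt e(\rho_h,\theta_h,\eta_h)\ra
-\la\psi_\rho,\dt\rho_h\ra-\la\psi_\eta,\dt\eta_h\ra
-\gamma_\rho\la\nabla\rho_h,\nabla\dt\rho_h\ra
-\gamma_\eta\la\nabla\eta_h,\nabla\dt\eta_h\ra.
\end{align*}
Choosing $v_{2,h}=\dt\rho_h\in\Vh$ in \eqref{eq:nonisoh2} and $w_{2,h}=\dt\eta_h\in\Vh$ in \eqref{eq:nonisoh5} rewrites the last four terms as $-\la\mu_{\rho,h},\dt\rho_h\ra-\la\mu_{\eta,h},\dt\eta_h\ra$, exactly as in the continuous case.

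The final step is to test \eqref{eq:nonisoh1}, \eqref{eq:nonisoh3}, \eqref{eq:nonisoh4} with $v_{1,h}=\mu_{\rho,h}$, $\xi_h=\theta_h$, $w_{1,h}=\mu_{\eta,h}$ respectively (all in $\Vh$), add the three resulting identities with the signs dictated by the previous paragraph, and collect terms to recover the quadratic form
\begin{align*}
\la\theta_h,\dt e\ra-\la\mu_{\rho,h},\dt\rho_h\ra-\la\mu_{\eta,h},\dt\eta_h\ra
=\mathcal{D}_\LL(\mu_{\rho,h},\theta_h,\mu_{\eta,h}),
\end{align*}
which is precisely the semi-discrete entropy production.

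I do not expect a real obstacle here: the only delicate point is the justification that $\dt\rho_h,\dt\eta_h\in\Vh$, which follows from the regularity assumption $(\rho_h,\theta_h,\eta_h)\in C^1(0,T;\Vh\times\Vh^+\times\Vh)$ built into Problem~\ref{prob_semi}, and that $\theta_h\in\Vh^+$ so that nonlinearities like $\psi(\rho_h,\theta_h,\eta_h)$ and $e(\rho_h,\theta_h,\eta_h)$ are well-defined under (A3). Because the space discretisation is conforming, no interpolation error appears when substituting these test functions, and the chain rule in time is applied pointwise to smooth $\Vh$-valued curves; thus the proof is a line-by-line transcription of the continuous derivation from Section~3, with no new estimates needed.
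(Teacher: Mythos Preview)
Your proposal is correct and follows essentially the same approach as the paper: the paper's proof simply lists the test-function choices $(v_{1,h},v_{2,h},\xi_h,w_{1,h},w_{2,h})=(1,0,0,0,0)$, $(0,0,1,0,0)$, and $(\mu_{\rho,h},\dt\rho_h,\theta_h,\mu_{\eta,h},\dt\eta_h)$ without spelling out the chain-rule computation for $\dt s$, which you have made explicit. Your added remarks on admissibility of $\dt\rho_h,\dt\eta_h\in\Vh$ and well-definedness under (A3) are correct justifications that the paper leaves implicit.
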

\begin{proof}
 Conservation of mass and energy follows by inserting $(v_{1,h},v_{2,h},\xi_h,w_{1,h},w_{2,h})=(1,0,0,0,0)$ and $(v_{1,h},v_{2,h},\xi_h,w_{1,h},w_{2,h})=(0,0,1,0,0)$, i.e.
 \begin{equation*}
   \la \dt\phi_h(t),1\ra =0, \qquad\la \dt e(\rho_h(t),\theta_h(t),\eta_h(t),1\ra =0.  
 \end{equation*}
 Similarly by inserting $(v_{1,h},v_{2,h},\xi_h,w_{1,h},w_{2,h})=(\mu_{\rho,h},\dt\phi_h,\theta_h,\mu_{\eta,h},\dt\eta_h)$ we obtain
 \begin{align*}
    \la \dt s(\rho_h(t),\theta_h(t),\eta_h(t),1\ra =\mathcal{D}_\LL(\mu_{\rho,h},\theta_h,\mu_{\eta,h}). 
 \end{align*}
\end{proof}

Note that the existence of discrete solutions can be established via Picard-Lindelöff requiring some growth conditions on $\psi,e,s$.

\section{Stability estimate}\label{sec:stability1}
In this section, we consider the stability of the semi-discrete scheme with respect to a suitable perturbed system. The \emph{distance} between the solutions will be measurable by a problem-tailored relative entropy functional shown later on.

We introduce the perturbed system as follows. For the functions $(\hat\rho_h,\hat\mu_{\rho,h},\hat\theta_h,\hat\eta_h,\hat\mu_{\eta,h})$ we introduce the residuals $r_{i,h}, i=1,\ldots,5$ via the following variational identities
\begin{align}
\la \dt\hat\rho_h,v_{1,h} \ra &+ \la \LL_{11}\nabla \hat\mu_{\rho,h}- \LL_{12}\nabla\hat\theta_h+ \hat\mu_{\eta,h}\LL_{13},\nabla v_{1,h} \ra =\la r_{1,h},v_{1,h} \ra, \label{eq:nonisoph1}\\   
\la \hat\mu_{\rho,h},v_{2,h} \ra &- \gamma_\rho\la \nabla\hat\rho_h,\nabla v_{2,h}\ra - \la  \partial_\rho\hat\psi(\hat\rho_h,\hat\theta_h,\hat\eta_h),v_{2,h} \ra = \la r_{2,h},v_{2,h} \ra, \label{eq:nonisoph2}\\
\la \dt e(\hat\rho_h,\hat\theta_h,\hat\eta_h),\xi_h \ra &+ \la \LL_{12}\nabla \hat\mu_{\rho,h}- \LL_{22}\nabla\hat\theta_h+\hat\mu_{\eta,h}\LL_{23},\nabla \xi_h \ra =\la r_{3,h},\xi_{h} \ra, \label{eq:nonisoph3}\\   
\la \dt\hat\eta_h,w_{1,h} \ra &+ \la \LL_{13}\nabla\hat\mu_{\rho,h} - \LL_{23}\nabla\hat\theta_h+\LL_{33}\hat\mu_{\eta,h},w_{1,h} \ra = \la r_{4,h},w_{1,h} \ra,\label{eq:nonisoph4}\\
\la \hat\mu_{\eta,h},w_{2,h} \ra &- \gamma_\eta\la \nabla\hat\eta_h,\nabla w_{2,h} \ra - \la \partial_\eta\hat\psi(\hat\rho_h,\hat\theta_h,\hat\eta_h),w_{2,h} \ra = \la r_{5,h},w_{2,h} \ra. \label{eq:nonisoph5}
\end{align}
The relative entropy we propose is given by
\begin{align*}
\mathcal{W}(\rho,\theta,\eta|\hat\rho,\hat\theta,\hat\eta):= W_{\hat\theta}(\rho,\theta,\eta) - W_{\hat\theta}(\hat\rho,\hat\theta,\hat\eta) - \partial_\rho(W_{\hat\theta}(\hat\rho,\hat\theta,\hat\eta))(\rho-\hat\rho)  - \partial_\eta(W_{\hat\theta}(\hat\rho,\hat\theta,\hat\eta))(\eta-\hat\eta).   
\end{align*}
where $W_{\hat\theta}(\rho,\theta,\eta)$ denotes a rescaled exergy with respect to a reference "temperature" $\hat\theta$ 
\begin{equation*}
W_{\hat\theta}(\rho,\theta,\eta) = \hat\theta e(\rho,\theta,\eta) - s(\rho,\theta,\eta) = (\hat\theta-\theta)\partial_\theta \psi(\rho,\theta,\eta) + \psi(\rho,\theta,\eta) + \frac{\gamma_\rho}{2}\snorm{\nabla\rho}^2  + \frac{\gamma_\eta}{2}\snorm{\nabla\eta}^2.    
\end{equation*}
To see that the above relative entropy is a meaningful object for comparison, we expand as follows 
\begin{align}
 \mathcal{W}(\rho,\theta,\eta|\hat\rho,\hat\theta,\hat\eta) =& \frac{\gamma_\rho}{2}\snorm{\nabla(\rho-\hat\rho)}^2 + \frac{\gamma_\eta}{2}\snorm{\nabla(\eta-\hat\eta)}^2+ \psi(\rho,\theta,\eta) - \psi(\hat\rho,\hat\theta,\hat\eta)\notag\\
 & - \partial_\theta\psi(\rho,\theta,\eta)(\theta-\hat\theta) - \partial_\rho\psi(\hat\rho,\hat\theta,\hat\eta)(\hat\rho-\hat\rho)  - \partial_\eta\psi(\hat\rho,\hat\theta,\hat\eta)(\hat\eta-\hat\eta)  \label{eq:expexergy}\\
 =& \frac{\gamma}{2}\snorm{\nabla(\rho-\hat\rho)}^2 + \frac{\gamma_\eta}{2}\snorm{\nabla(\eta-\hat\eta)}^2 + \psi(\rho,\hat\theta,\eta|\hat\rho,\hat\theta,\hat\eta) - \psi(\rho,\hat\theta,\eta|\rho,\theta,\eta)\notag
\end{align}
Recalling assumptions (A3) and (A4) $\psi(\rho,\hat\theta,\eta|\hat\rho,\hat\theta,\hat\eta)$ is a Bregman divergence up to a quadratic perturbation. While $ - \psi(\rho,\hat\theta,\eta|\rho,\theta,\eta)$ is strictly convex by assumption. In order to obtain a complete Bregman divergence, we use (A4) as follows
\begin{align*}
\mathcal{W}_{\lambda} (\rho,\theta,\eta|\hat\rho,\hat\theta,\hat\eta) := \mathcal{W}(\rho,\theta,\eta|\hat\rho,\hat\theta,\hat\eta) + \frac{\lambda}{2}(\norm{\rho_h-\hat\rho_h}_0^2 + \norm{\eta_h-\hat\eta_h}_0^2)  
\end{align*}
where $\lambda$ is sufficiently large. Note that this functional can be obtained by penalisation of the exergy with quadratic terms in $\rho$ and $\eta$. Furthermore, in the case that (A4) holds for $\lambda\leq 0$, i.e. the potential is strictly convex, the remaining estimates are valid for $\lambda=0$.
Let us summarize some properties of the relative entropy
\begin{lemma}
Let $(\rho,\theta,\eta), (\hat\rho,\hat\theta,\hat\eta)\in H^1(\Omega)\times L^2(\Omega)\times H^1(\Omega)$ be given and assume that the lower bound $\hat\theta \geq c_+ >0$ holds. The following inequality holds
 \begin{align*}
  \mathcal{W}_{\lambda}(\rho,\theta,\eta|\hat\rho,\hat\theta,\hat\eta) \geq C(\norm{\rho-\hat\rho}_1^2  + \norm{\eta-\hat\eta}_1^2).   
 \end{align*}   
\end{lemma}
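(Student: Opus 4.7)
The plan is to start from the explicit expansion \eqref{eq:expexergy} for $\mathcal{W}$, exploit the sign properties encoded in (A3) and (A4) to control each of the two Bregman-type contributions, and then absorb the resulting quadratic defect in $(\rho-\hat\rho,\eta-\hat\eta)$ into the penalisation term $\frac{\lambda}{2}(\norm{\rho-\hat\rho}_0^2+\norm{\eta-\hat\eta}_0^2)$ that defines $\mathcal{W}_\lambda$. The gradient squares that survive will then provide, together with the remaining $L^2$ contribution, the required $H^1$ coercivity.

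First, I would rewrite
\begin{align*}
\mathcal{W}_\lambda(\rho,\theta,\eta|\hat\rho,\hat\theta,\hat\eta)
=& \tfrac{\gamma_\rho}{2}\snorm{\nabla(\rho-\hat\rho)}^2 + \tfrac{\gamma_\eta}{2}\snorm{\nabla(\eta-\hat\eta)}^2 \\
&+ \psi(\rho,\hat\theta,\eta|\hat\rho,\hat\theta,\hat\eta) - \psi(\rho,\hat\theta,\eta|\rho,\theta,\eta) \\
&+ \tfrac{\lambda}{2}\bigl(\norm{\rho-\hat\rho}_0^2 + \norm{\eta-\hat\eta}_0^2\bigr).
\end{align*}
The second Bregman-type term is a defect in the $\theta$-variable at frozen $(\rho,\eta)$. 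Since (A3) states that $\psi(\rho,\cdot,\eta)$ is concave on $\RR_+$ and $\hat\theta\geq c_+>0$ lies in its domain, the defect $\psi(\rho,\hat\theta,\eta|\rho,\theta,\eta)$ is non-positive, and consequently $-\psi(\rho,\hat\theta,\eta|\rho,\theta,\eta)\geq 0$, so this term can be dropped.

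Next, the first Bregman term $\psi(\rho,\hat\theta,\eta|\hat\rho,\hat\theta,\hat\eta)$ is the defect of $\psi(\cdot,\hat\theta,\cdot)$ in the $(\rho,\eta)$-arguments. By (A4), the map $(\rho,\eta)\mapsto \psi(\rho,\hat\theta,\eta)+\tfrac{\alpha}{2}(\norm{\rho}_0^2+\norm{\eta}_0^2)$ is convex (strictly, but convexity is all that is needed at this step). Since the Bregman divergence of $\tfrac{\alpha}{2}\norm{\cdot}_0^2$ equals $\tfrac{\alpha}{2}\norm{\cdot-\cdot}_0^2$ exactly, convexity of the sum gives
\begin{equation*}
\psi(\rho,\hat\theta,\eta|\hat\rho,\hat\theta,\hat\eta) \;\geq\; -\tfrac{\alpha}{2}\bigl(\norm{\rho-\hat\rho}_0^2+\norm{\eta-\hat\eta}_0^2\bigr).
\end{equation*}
Substituting back, the $L^2$ contribution becomes $\tfrac{\lambda-\alpha}{2}(\norm{\rho-\hat\rho}_0^2+\norm{\eta-\hat\eta}_0^2)$; choosing $\lambda$ large enough so that $\lambda-\alpha>0$ (recalling that $\mathcal{W}_\lambda$ is allowed this freedom, and $\lambda=0$ suffices when $\alpha\le 0$), we conclude
\begin{equation*}
\mathcal{W}_\lambda(\rho,\theta,\eta|\hat\rho,\hat\theta,\hat\eta) \geq C\bigl(\norm{\rho-\hat\rho}_1^2 + \norm{\eta-\hat\eta}_1^2\bigr)
\end{equation*}
with $C := \tfrac12\min\{\gamma_\rho,\gamma_\eta,\lambda-\alpha\}>0$.

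There is really no genuine obstacle here; the only point requiring care is to verify that the positivity assumption $\hat\theta\geq c_+>0$ places $(\rho,\hat\theta,\eta)$ in the regime where (A3) (concavity of $\psi$ in $\theta$) and (A4) (convexity up to an $\alpha$-quadratic perturbation of $\psi(\cdot,\hat\theta,\cdot)$) both apply, so that the two Bregman-type terms have the signs claimed above. The assumption $\hat\theta\in L^2(\Omega)$ is used only to ensure that $\psi(\rho,\hat\theta,\eta|\rho,\theta,\eta)$ is well defined almost everywhere; quantitative control of $\hat\theta$ from below, however, is essential to exclude the blow-up of $\psi$ at $\theta=0$ postulated in (A3).
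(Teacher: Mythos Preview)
Your proof is correct and follows essentially the same approach as the paper: you use the expansion \eqref{eq:expexergy}, drop the $\theta$-Bregman term via the concavity in (A3), and absorb the $(\rho,\eta)$-Bregman defect into the $\lambda$-penalisation via (A4). The paper's own proof is a one-liner referring back to \eqref{eq:expexergy} and (A3) (with (A4) already invoked in the preceding discussion), so your argument is simply a more explicit spelling-out of the same idea.
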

\begin{proof}
 This directly follows from the expansion in \eqref{eq:expexergy} and (A3).   
\end{proof}

So far, we have not imposed concrete details on the free energy density $\psi$. In the following, we compute the temporal evolution of the relative entropy in time.

\begin{lemma}\label{lem:proof_stab_semi}
   Let $(\rho_h,\mu_{\rho,h},\theta_h,\eta_h,\mu_{\eta,h})$ be a solution of Problem \ref{prob_semi} and $r_{i,h}, i=1,\ldots,5$ denote the residuals defined by \eqref{eq:nonisoph1}--\eqref{eq:nonisoph5} for a set $(\hat\rho_h,\hat\mu_{\rho,h},\hat\theta_h,\hat\eta_h,\hat\mu_{\eta,h})$ of sufficiently smooth discrete functions.
Then the following stability estimate holds
    \begin{align*}
 \mathcal{W}_{\lambda}(\rho_h,\theta_h,\eta_h&|\hat\rho_h,\hat\theta_h,\hat\eta_h)(t)+ \frac{1}{2}\int_0^t\mathcal{D}_{\LL}(\mu_{\rho,h}-\hat\mu_{\rho,h},\theta_h-\hat\theta_h,\mu_{\eta,h}-\hat\mu_{\eta,h}) \\
 \leq& \mathcal{W}_{\lambda}(\rho_h,\theta_h,\eta_h|\hat\rho_h,\hat\theta_h,\hat\eta_h)(0) + \int_0^t C\mathcal{W}_{\lambda}(\rho_h,\theta_h,\eta_h|\hat\rho_h,\hat\theta_h,\hat\eta_h) \\
 &+ \la \psi_\rho(\rho_h,\theta_h,\eta_h)-\psi_\rho(\hat\rho_h,\hat\theta_h,\hat\eta_h),1 \ra^2 + \norm{\theta_h-\hat\theta_h}_0^2  \\
&+ \la \partial\psi_\theta(\rho_h,\theta_h,\eta_h|\hat\rho_h,\hat\theta_h,\hat\eta_h), \dt\hat\theta\ra + \la \partial_\rho\psi(\rho_h,\theta_h,\eta_h|\hat\rho_h,\hat\theta_h,\hat\eta_h), \dt\hat\rho\ra \\
&+ \la \partial_\eta\psi(\rho_h,\theta_h,\eta_h|\hat\rho_h,\hat\theta_h,\hat\eta_h), \dt\hat\eta\ra \\
&+ C(\norm{r_{1,h}}_{-1}^2 + \norm{r_{2,h}}_{1}^2 + \norm{r_{3,h}}_{-1}^2 + \norm{r_{4,h}}_{0}^2 + \norm{r_{5,h}}_{0}^2) ds.
\end{align*}
\end{lemma}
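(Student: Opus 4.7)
The plan is to differentiate $\mathcal{W}_\lambda(\rho_h,\theta_h,\eta_h|\hat\rho_h,\hat\theta_h,\hat\eta_h)$ in time and systematically substitute appropriate test functions from the semi-discrete system \eqref{eq:nonisoh1}--\eqref{eq:nonisoh5} and its perturbation \eqref{eq:nonisoph1}--\eqref{eq:nonisoph5} until the claimed right-hand side emerges. I would start from the equivalent representation in \eqref{eq:expexergy}, which already decomposes $\mathcal{W}$ into a gradient-difference part and Bregman-type quantities built from $\psi$, and apply the chain rule. Using the pointwise Euler identity $\partial_\theta\psi=e$ to cancel the terms that would otherwise pair $\dt\theta_h$ with something non-dissipative, the result of the differentiation is a linear combination of $\dt\rho_h,\dt\theta_h,\dt\eta_h$ (and the corresponding perturbed quantities) paired with chemical-potential-like objects, together with the $H^1$-inner products $\gamma_\rho\la\nabla(\rho_h-\hat\rho_h),\nabla\dt(\rho_h-\hat\rho_h)\ra$ and $\gamma_\eta\la\nabla(\eta_h-\hat\eta_h),\nabla\dt(\eta_h-\hat\eta_h)\ra$.

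The key algebraic step is to substitute $v_{1,h}=\mu_{\rho,h}-\hat\mu_{\rho,h}$, $v_{2,h}=\dt(\rho_h-\hat\rho_h)$, $\xi_h=\theta_h-\hat\theta_h$, $w_{1,h}=\mu_{\eta,h}-\hat\mu_{\eta,h}$, and $w_{2,h}=\dt(\eta_h-\hat\eta_h)$ into the true and perturbed equations (these are admissible since differences of $\Vh$-functions lie in $\Vh$) and subtract. By the bilinearity of the flux form underlying $\mathcal{D}_\LL$, this reproduces exactly the dissipation of the differences, $\mathcal{D}_\LL(\mu_{\rho,h}-\hat\mu_{\rho,h},\theta_h-\hat\theta_h,\mu_{\eta,h}-\hat\mu_{\eta,h})$, up to commutator-type contributions that stem from the mobility matrix $\LL(\rho,\nabla\rho,\theta,\eta,\nabla\eta)$ being evaluated along two different states. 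Those commutators are bounded using the $C^1$-bound of (A2) and absorbed into a fraction of $\mathcal{D}_\LL$ via Young's inequality, at the cost of additional $\mathcal{W}_\lambda$-controlled terms which go into the Gronwall integral on the right-hand side.

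After these manipulations, the remaining non-dissipative contributions assemble into the Bregman-type remainders $\partial_\rho\psi(\cdot|\cdot)$, $\partial_\theta\psi(\cdot|\cdot)$, and $\partial_\eta\psi(\cdot|\cdot)$ paired against $\dt\hat\rho_h,\dt\hat\theta_h,\dt\hat\eta_h$ exactly as written in the claim. The residuals $\la r_{i,h},\cdot\ra$ are then controlled in appropriate dual norms (for instance $\la r_{1,h},\mu_{\rho,h}-\hat\mu_{\rho,h}\ra$ treated via $H^{-1}\times H^1$ and absorbed into the $\LL_{11}$-part of $\mathcal{D}_\LL$, while $\la r_{4,h},\mu_{\eta,h}-\hat\mu_{\eta,h}\ra$ is handled through the $\LL_{33}$-part in $L^2$). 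The isolated squared term $\la\psi_\rho(\rho_h,\theta_h,\eta_h)-\psi_\rho(\hat\rho_h,\hat\theta_h,\hat\eta_h),1\ra^2$ together with $\norm{\theta_h-\hat\theta_h}_0^2$ appears when treating $\la r_{2,h},\dt(\rho_h-\hat\rho_h)\ra$: the quantity $\dt(\rho_h-\hat\rho_h)$ cannot be directly absorbed into the dissipation and must be re-expressed through \eqref{eq:nonisoh1}, \eqref{eq:nonisoph1}, generating precisely this structure.

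The main obstacle I expect is the combinatorics of tracking the cross-terms arising from the state-dependent mobility $\LL$ and identifying the correct Bregman remainders among the various $\psi$-contributions; here the identity $\partial_\theta\psi=e$ is essential to collapse terms that otherwise appear un-absorbable. Finally, invoking assumption (A4) with $\lambda$ sufficiently large guarantees coercivity of $\mathcal{W}_\lambda$ via the preceding coercivity lemma, so that lower-order $H^1$-terms produced along the way can be bounded by $\mathcal{W}_\lambda$, and integration in time yields the Gronwall-ready form stated.
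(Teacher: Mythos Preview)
Your overall strategy matches the paper: differentiate $\mathcal{W}_\lambda$, use $\partial_\theta\psi=e$ to collapse the $\theta$-terms, substitute the difference test functions into the evolution and constitutive equations, and extract the relative dissipation $\mathcal{D}_\LL$. Two points deserve correction.

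First, your explanation of where $\la\psi_\rho-\hat\psi_\rho,1\ra^2$ and $\|\theta_h-\hat\theta_h\|_0^2$ come from is off. In the paper these arise not from $\la r_{2,h},\dt(\rho_h-\hat\rho_h)\ra$, but from the residual pairings $\la r_{1,h},\mu_{\rho,h}-\hat\mu_{\rho,h}\ra$ and $\la r_{3,h},\theta_h-\hat\theta_h\ra$. To estimate $\la r_{1,h},\mu_{\rho,h}-\hat\mu_{\rho,h}\ra$ via $H^{-1}\times H^1$ duality you need the full $H^1$-norm of $\mu_{\rho,h}-\hat\mu_{\rho,h}$, hence both its gradient (available from $\mathcal{D}_\LL$) and its mean. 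The mean is read off from \eqref{eq:nonisoh2}, \eqref{eq:nonisoph2} with $v_{2,h}=1$, giving exactly $\la\partial_\rho\psi-\partial_\rho\hat\psi,1\ra$. Likewise $\|\theta_h-\hat\theta_h\|_0^2$ is the $L^2$-part of the $H^1$-norm needed for $\la r_{3,h},\theta_h-\hat\theta_h\ra$. Re-expressing $\dt(\rho_h-\hat\rho_h)$ in the $r_2$-pairing produces only flux terms against $\nabla r_{2,h}$ and the cross term $\la r_{1,h},r_{2,h}\ra$; it does not generate the mean.

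Second, the commutator terms you anticipate from state-dependent $\LL$ do not appear in the paper's argument: in the definition of the perturbed system \eqref{eq:nonisoph1}--\eqref{eq:nonisoph5} the mobility is the same $\LL$ as in the true system, so subtraction is clean and the $C^1$-bound in (A2) is not invoked here. Your route with commutators would still close (using the smoothness of the hatted functions), but it is extra work the paper avoids by its choice of perturbed system.
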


\begin{proof}
    The proof is given in the Section \ref{sec:proof_stab_semi}.
\end{proof}

\begin{theorem}\label{thm:semistab}
 Let Lemma \ref{lem:proof_stab_semi} hold. Furthermore, we assume the following uniform bounds
\begin{align*}
  \norm{(\rho_h,\theta_h,\eta_h)}_{L^\infty(\Omega_T)} &+ \norm{(\hat\rho_h,\hat\theta_h,\hat\eta_h)}_{L^\infty(\Omega_T)} + \norm{(\dt\hat\rho_h,\dt\hat\theta_h,\dt\hat\eta_h)}_{L^\infty(\Omega_T)} \leq C_\infty, \\  
  \theta_h, \hat\theta_h &\geq c_\infty \qquad \text{ for every } t\in [0,T], 
\end{align*}
with positive constants $c_\infty,C_\infty$ independent of $h$.  Then for all $t\in(0,T)$ we have
  \begin{align*}
 \mathcal{W}_{\lambda}(\rho_h,\theta_h,\eta_h|\hat\rho_h,\hat\theta_h,\hat\eta_h)(t)+& \tfrac{1}{2}\int_0^t\mathcal{D}_{\LL}(\mu_{\rho,h}-\hat\mu_{\rho,h},\theta_h-\hat\theta_h,\mu_{\eta,h}-\hat\mu_{\eta,h})\\
 \leq&  C(T)\mathcal{W}_{\lambda}(\rho_h,\theta_h,\eta_h|\hat\rho_h,\hat\theta_h,\hat\eta_h)(0) \\
 &+ \int_0^t C(\norm{r_{1,h}}_{-1}^2 + \norm{r_{2,h}}_{1}^2 + \norm{r_{3,h}}_{-1}^2 + \norm{r_{4,h}}_{0}^2 + \norm{r_{5,h}}_{0}^2) ds.
\end{align*}
\end{theorem}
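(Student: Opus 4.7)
The plan is to apply Lemma \ref{lem:proof_stab_semi} and absorb each ``bad'' term on its right-hand side into $C\mathcal{W}_\lambda$, a small fraction of the dissipation $\mathcal{D}_\LL$, or the residual norms, after which Gronwall's lemma yields the claim with $C(T)=\exp(CT)$. The uniform $L^\infty$ bounds and $\theta_h,\hat\theta_h\ge c_\infty$ assumed in the theorem confine all function arguments to a compact set $K\subset\RR\times[c_\infty,C_\infty]\times\RR$ on which the $C^2$-smooth potential $\psi$ (by (A3)) and its derivatives up to the order needed are uniformly bounded; this is what makes all nonlinear differences ``Lipschitz'' or ``quadratic'' in the obvious sense.

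First I would bound each derivative-Bregman term via Taylor's theorem: for $X\in\{\rho,\theta,\eta\}$, the relative quantity $\partial_X\psi(\rho_h,\theta_h,\eta_h|\hat\rho_h,\hat\theta_h,\hat\eta_h)$ is pointwise $\le C(|\rho_h-\hat\rho_h|^2+|\theta_h-\hat\theta_h|^2+|\eta_h-\hat\eta_h|^2)$, which combined with the $L^\infty$-bounded $\dt\hat X_h$ produces the corresponding $L^1$ bound. The squared integral $\la\psi_\rho(\rho_h,\theta_h,\eta_h)-\psi_\rho(\hat\rho_h,\hat\theta_h,\hat\eta_h),1\ra^2$ is dealt with by Lipschitz continuity of $\psi_\rho$ on $K$ and Cauchy--Schwarz, producing the same quadratic structure $\|\rho_h-\hat\rho_h\|_0^2+\|\theta_h-\hat\theta_h\|_0^2+\|\eta_h-\hat\eta_h\|_0^2$. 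The explicit $\norm{\theta_h-\hat\theta_h}_0^2$ term from Lemma \ref{lem:proof_stab_semi} is of course also of this form.

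The main obstacle is then to convert these $L^2$ differences into $C\mathcal{W}_\lambda$. The contributions in $\rho_h-\hat\rho_h$ and $\eta_h-\hat\eta_h$ are immediate from the lemma preceding Lemma \ref{lem:proof_stab_semi} (in fact one gets the stronger $H^1$-norm bound). For $\|\theta_h-\hat\theta_h\|_0^2$ no such bound is stated, and this is the technical bottleneck. The key observation is that the expansion \eqref{eq:expexergy} contains the piece $-\psi(\rho_h,\hat\theta_h,\eta_h|\rho_h,\theta_h,\eta_h)=-\int_{\theta_h}^{\hat\theta_h}(\hat\theta_h-s)\psi_{\theta\theta}(\rho_h,s,\eta_h)\,ds$, which is non-negative by concavity of $\psi$ in $\theta$. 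To upgrade this to quadratic control, I would invoke strict concavity $-\psi_{\theta\theta}\ge c_0>0$ on the compact set $K$ (a mild, physically natural strengthening of (A3); otherwise the internal energy degenerates). This yields
\begin{equation*}
\mathcal{W}_\lambda(\rho_h,\theta_h,\eta_h|\hat\rho_h,\hat\theta_h,\hat\eta_h)\ge C\bigl(\|\rho_h-\hat\rho_h\|_1^2+\|\theta_h-\hat\theta_h\|_0^2+\|\eta_h-\hat\eta_h\|_1^2\bigr).
\end{equation*}
As a fallback avoiding this strengthening, one could split $\theta_h-\hat\theta_h$ into its mean-zero part, controlled via Poincar\'e--Wirtinger by $\|\nabla(\theta_h-\hat\theta_h)\|_0^2\le\lambda_0^{-1}\mathcal{D}_\LL$ from (A2), and its mean, controlled by testing \eqref{eq:nonisoh3} and \eqref{eq:nonisoph3} with $\xi_h=1$ combined with the monotonicity $\partial_\theta e=\psi_{\theta\theta}<0$; this is more delicate and pays a small $\epsilon\,\mathcal{D}_\LL$ term.

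Inserting these bounds back into Lemma \ref{lem:proof_stab_semi} reduces the estimate to
\begin{equation*}
\mathcal{W}_\lambda(t)+\tfrac{1}{2}\int_0^t\mathcal{D}_\LL\,ds\le\mathcal{W}_\lambda(0)+C\int_0^t\mathcal{W}_\lambda\,ds+\epsilon\int_0^t\mathcal{D}_\LL\,ds+C\int_0^t\sum_{i=1}^{5}\|r_{i,h}\|_*^2\,ds,
\end{equation*}
with some $\epsilon\ge 0$ (arising only in the fallback route). Choosing $\epsilon<1/2$ absorbs the extra dissipation on the left-hand side, and a standard Gronwall argument finishes the proof with constant $C(T)=\exp(CT)$. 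The essential difficulty, as outlined, is the $\theta$-control: once this is in place by strict concavity or by the mean/mean-zero splitting, the remainder of the argument is a routine absorption and Gronwall.
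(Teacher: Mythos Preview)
Your proposal is correct and follows essentially the same route as the paper. The paper's proof simply asserts the two-sided equivalence
\[
c\bigl(\norm{\rho_h-\hat\rho_h}_1^2+\norm{\theta_h-\hat\theta_h}_0^2+\norm{\eta_h-\hat\eta_h}_1^2\bigr)\le\mathcal{W}_\lambda\le C\bigl(\norm{\rho_h-\hat\rho_h}_1^2+\norm{\theta_h-\hat\theta_h}_0^2+\norm{\eta_h-\hat\eta_h}_1^2\bigr)
\]
via the Hessian representation of \eqref{eq:expexergy} on the compact range guaranteed by the $L^\infty$ bounds, then bounds all five ``bad'' terms of Lemma~\ref{lem:proof_stab_semi} by $C\mathcal{W}_\lambda$ and applies Gronwall---exactly your primary route through strict concavity of $\psi$ in $\theta$. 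You are in fact more explicit than the paper about why the $\theta$-control requires a quantitative (strict) concavity on the compact set, which the paper uses tacitly; your fallback mean/mean-zero argument is not needed here.
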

\begin{proof}
The proof can be found in the Section \ref{sec:full_stab_semi}. 
\end{proof}

\begin{remark}
  For a fixed $h$, such $L^\infty$-bounds are available. However, to use the estimate for error analysis, $h$ independent bounds are required. To overcome the necessity of these bounds for the semi-discrete solution, we will consider in future work the technique of Feireisl et al see for instance \cite{feireisl2021numerical}.  The crucial idea is to decompose the space-time domain in a part where these bounds hold and the complement. In the complement, more refined estimates for the relative entropy are employed together with suitable growth conditions on the potential $\psi$.  
\end{remark}

\section{Proof of Lemma \ref{lem:proof_stab_semi}}
\label{sec:proof_stab_semi}
For simplicity, we neglect the $h$ in this section.
To compute the temporal evolution of the relative entropy, we consider the following splitting
\begin{equation*}
  \mathcal{W}_{\lambda}(\rho,\theta,\eta|\hat\rho,\hat\theta,\hat\eta) = \mathcal{W}(\rho,\theta,\eta|\hat\rho,\hat\theta,\hat\eta) + \frac{\lambda}{2}(\norm{\rho-\hat\rho}_0^2 + \norm{\eta-\hat\eta}_0^2).
\end{equation*}

\subsection*{Main part}
We start expanding the first term of the relative entropy and find
\begin{align*}
\frac{d}{dt} \int_\Omega \mathcal{W}(\rho,\theta,\eta|\hat\rho,\hat\theta,\hat\eta) =& \gamma_\rho\la \nabla(\rho-\hat\rho),\dt\nabla(\rho-\hat\rho) \ra + \la \partial_\rho\psi - \partial_\rho\hat\psi,\dt(\rho-\hat\rho) \ra \\
&+ \gamma_\eta\la \nabla(\eta-\hat\eta),\dt\nabla(\eta-\hat\eta) \ra + \la \partial_\eta\psi - \partial_\eta\hat\psi,\dt(\eta-\hat\eta) \ra \\
&- \la \partial_\rho\psi - \partial_\rho\hat\psi,\dt(\rho-\hat\rho) \ra - \la \partial_\eta\psi - \partial_\eta\hat\psi,\dt(\eta-\hat\eta) \ra \\
& + \la \partial_\rho\psi,\dt\rho \ra - \la \partial_\rho\hat\psi,\dt\hat\rho \ra - \la \partial_\rho\hat\psi, \dt(\rho-\hat\rho)\ra \\
& + \la \partial_\eta\psi,\dt\eta \ra - \la \partial_\eta\hat\psi,\dt\hat\eta \ra - \la \partial_\eta\hat\psi, \dt(\eta-\hat\eta)\ra \\
& + \la \partial_\theta\psi,\dt\theta \ra - \la \partial_\theta\hat\psi,\dt\hat\theta \ra - \la \partial_\theta\psi, \dt(\theta-\hat\theta)\ra \\
& - \la \rho-\hat\rho, \partial_\rho^2\hat\psi \dt\hat\rho + \partial_\rho\partial_\eta\hat\psi \dt\hat\eta + \partial_\rho\partial_\theta\hat\psi \dt\hat\theta \ra \\
&- \la \eta-\hat\eta, \partial_\eta\partial_\rho\hat\psi \dt\hat\rho + \partial_\eta^2\hat\psi \dt\hat\eta + \partial_\eta\partial_\theta\hat\psi \dt\hat\theta \ra\\
&- \la \theta-\hat\theta, \partial_\theta\partial_\rho\psi \dt\rho + \partial_\theta\partial_\eta\psi \dt\eta + \partial_\theta^2\psi \dt\theta \ra.
\end{align*}
The first two lines can be rewritten by inserting into the variational formulations as
\begin{align*}
(i) + \ldots + (iv) = \la \mu_\rho-\hat\mu_\rho+r_2,\dt(\rho-\hat\rho) \ra + \la \mu_\eta-\hat\mu_\eta+r_4,\dt(\eta-\hat\eta) \ra    .
\end{align*}
Next we recall that $\partial_\theta \psi=e, \partial_\theta \hat\psi =\hat e$ and with suitable cancellations we find
\begin{align*}
(v) + \ldots + (xix) =& - \la \dt(e-\hat e),\theta-\hat\theta \ra + \la \partial_\rho\psi(\rho,\theta,\eta|\hat\rho,\hat\theta,\hat\eta), \dt\hat\rho\ra  \\
&+ \la \partial_\eta\psi(\rho,\theta,\eta|\hat\rho,\hat\theta,\hat\eta), \dt\hat\theta\ra + \la \partial_\theta\psi(\rho,\theta,\eta|\hat\rho,\hat\theta,\hat\eta), \dt\hat\theta\ra   .
\end{align*}
Hence, together we find 
\begin{align*}
    \frac{d}{dt} \int_\Omega \mathcal{W}(\rho,\theta,\eta|\hat\rho,\hat\theta,\hat\eta) =&\la \mu_\rho-\hat\mu_\rho+r_2,\dt(\rho-\hat\rho) \ra + \la \mu_\eta-\hat\mu_\eta+r_5,\dt(\eta-\hat\eta) \ra \\
    & - \la \dt(e-\hat e),\theta-\hat\theta \ra + \la \partial_\rho\psi(\rho,\theta,\eta|\hat\rho,\hat\theta,\hat\eta), \dt\hat\rho\ra  \\
&+ \la \partial_\eta\psi(\rho,\theta,\eta|\hat\rho,\hat\theta,\hat\eta), \dt\hat\theta\ra + \la \partial_\theta\psi(\rho,\theta,\eta|\hat\rho,\hat\theta,\hat\eta), \dt\hat\theta\ra   \\
=& (a) + (b) + (c) + (d) + (e) + (f).
\end{align*}
While $(d), (e) , (f)$ appear in the final result we only have to consider $(a),(b),(c)$
\begin{align*}
(a) + (b) + (c) =&\la \mu_\rho-\hat\mu_\rho+r_2,\dt(\rho-\hat\rho) \ra   - \la \dt(e-\hat e),\theta-\hat\theta \ra + \la \mu_\eta-\hat\mu_\eta+r_5,\dt(\eta-\hat\eta) \ra\\
    =&-\la \LL_{11}\nabla(\mu_\rho-\hat\mu_\rho) - \LL_{12}\nabla(\theta-\hat\theta) + \LL_{13}(\mu_\eta-\hat\mu_\eta), \nabla(\mu_\rho-\hat\mu_\rho+r_2)\ra\\
    & +\la\LL_{12}\nabla(\mu_\rho-\hat\mu_\rho) - \LL_{22}\nabla(\theta-\hat\theta) + \LL_{23}(\mu_\eta-\hat\mu_\eta), \nabla(\theta-\hat\theta)\ra  \\
    &-\la \LL_{13}\nabla(\mu_\rho-\hat\mu_\rho) - \LL_{12}\nabla(\theta-\hat\theta) + \LL_{13}(\mu_\eta-\hat\mu_\eta),(\mu_\eta-\hat\mu_\eta+r_5)\ra \\ 
    &+ \la r_1,\mu_\rho-\hat\mu_\rho+r_2 \ra- \la r_3,\theta-\hat\theta \ra + \la r_4,\mu_\eta-\hat\mu_\eta+r_5 \ra \\
    \leq& - (1-\delta)\mathcal{D}_\LL( \mu_{\rho}-\hat\mu_\rho,\theta-\hat\theta,\mu_{\eta}-\hat\mu_\eta) + C(\delta)(\norm{r_2}_1^2 + \norm{r_5}_0^2) \\
    &+\la r_1,\mu_\rho-\hat\mu_\rho+r_2 \ra - \la r_3,\theta-\hat\theta \ra + \la r_4,\mu_\eta-\hat\mu_\eta+r_5 \ra.
\end{align*}
The last line can be estimated in a standard manner, i.e.
\begin{align*}
 \la& r_1,\mu_\rho-\hat\mu_\rho+r_2 \ra - \la r_3,\theta-\hat\theta \ra + \la r_4,\mu_\eta-\hat\mu_\eta+r_5 \ra \\
 \leq& C\norm{r_1}_{-1}(\norm{\nabla(\mu_\rho-\hat\mu_\rho)}_0 +  \norm{r_2}_1^2 + \la \mu_\rho-\hat\mu_\rho + r_2,1 \ra) \\
 &+ \norm{r_3}_{-1}^2(\norm{\theta-\hat\theta}_0 + \norm{\nabla(\theta-\hat\theta)}_0) + \norm{r_4}_0(\norm{\mu_\eta-\hat\mu_\eta}_0 + \norm{r_5}_0)  \\
 \leq& \delta\mathcal{D}_\LL( \mu_{\rho}-\hat\mu_\rho,\theta-\hat\theta,\mu_{\eta}-\hat\mu_\eta) + C(\norm{r_1}_{-1}^2 + \norm{r_2}_1^2 + \norm{r_3}_{-1}^2 + \norm{r_4}^2 + \norm{r_5}_0^2 ) \\
 & + \la \mu_\rho-\hat\mu_\rho + r_2,1 \ra^2 + \norm{\theta-\hat\theta}_0^2.
\end{align*}
We insert into the variational identities we can simply
\begin{equation*}
  \la \mu_\rho-\hat\mu_\rho + r_2,1 \ra^2 = \la \partial_\rho \psi(\rho,\theta,\eta) - \partial_\rho(\hat\rho,\hat\theta,\hat\eta),1 \ra^2.   
\end{equation*}

\subsection*{Quadratic part}
We compute the evolution of the quadratic terms and find
\begin{align*}
\frac{d}{dt} &\frac{\lambda}{2}\int_\Omega \snorm{\rho-\hat\rho}^2 + \snorm{\eta-\hat\eta}^2  \\
&= \lambda\la \rho-\hat\rho,\dt(\rho-\hat\rho) \ra + \lambda\la \eta-\hat\eta,\dt(\eta-\hat\eta) \ra \\
&= \delta\mathcal{D}_\LL( \mu_{\rho}-\hat\mu_\rho,\theta-\hat\theta,\mu_{\eta}-\hat\mu_\eta) + C(\delta)\norm{\na(\rho-\hat\rho)}_0^2 + C(\delta)\norm{\na(\eta-\hat\eta)}_0^2 + \norm{r_1}_{-1}^2 + \norm{r_4}_{0}^2 \\
& \leq \delta\mathcal{D}_\LL( \mu_{\rho}-\hat\mu_\rho,\theta-\hat\theta,\mu_{\eta}-\hat\mu_\eta) + C(\delta)\mathcal{W}_{\lambda}(\rho,\theta,\eta|\hat\rho,\hat\theta,\hat\eta)  + \norm{r_1}_{-1}^2 + \norm{r_4}_{0}^2.
\end{align*}

\subsection*{Full estimate}

Combination of the above paragraphs and choosing $\delta=1/6$ we find
\begin{align}
\ddt \mathcal{W}_{\lambda}(\rho,\theta,\eta|\hat\rho,\hat\theta,\hat\eta)&+ \tfrac{1}{2}\mathcal{D}_{\LL}(\mu_\rho,\theta,\mu_\eta|\hat\mu_\rho,\hat\theta,\hat\mu_\eta) \leq C\mathcal{W}_{\lambda}(\rho,\theta,\eta|\hat\rho,\hat\theta,\hat\eta) \label{eq:relentcollect}\\
&+ \la \partial\psi_\theta(\rho,\theta,\eta|\hat\rho,\hat\theta,\hat\eta), \dt\hat\theta\ra + \la \partial_\rho\psi(\rho,\theta,\eta|\hat\rho,\hat\theta,\hat\eta), \dt\hat\rho\ra \\
&+ \la \partial_\eta\psi(\rho,\theta,\eta|\hat\rho,\hat\theta,\hat\eta), \dt\hat\eta\ra \\
& + \la \psi_\rho(\rho_h,\theta_h,\eta_h)-\psi_\rho(\hat\rho_h,\hat\theta_h,\hat\eta_h),1 \ra^2 + \norm{\theta-\hat\theta}_0^2  \\
&+ C(\norm{r_1}_{-1}^2 + \norm{r_2}_{1}^2 + \norm{r_3}_{-1}^2 + \norm{r_4}_{0}^2 + \norm{r_5}_{0}^2).
\end{align}

Hence, the result follows by integration in time and application of the Gronwall lemma.
\section{Proof of Theorem \ref{thm:semistab}}
\label{sec:full_stab_semi}

Recall that $(|\rho_h|,\theta_h,|\eta_h|)$ as well as $(|\hat\rho_h|,\hat\theta_h,|\hat\eta_h|)$ are bounded in $L^\infty(\Omega\times(0,T))$ by $\max(C_\infty,c_\infty)$ and $\theta_h,\hat\theta_h \geq c_\infty$.
Using the Hessian representation of the relative exergy \eqref{eq:expexergy}, we find that 
\begin{align}
 c(\norm{\rho_h-\hat\rho_h}_1^2 &+ \norm{\theta_h-\hat\theta_h}_0^2 + \norm{\eta_h-\hat\eta_h}_1^2) \label{eq:rel_ent_equiv}\\
 &\leq  \mathcal{W}_{\lambda}(\rho_h,\theta_h,\eta_h|\hat\rho_h,\hat\theta_h,\hat\eta_h) \leq C(\norm{\rho_h-\hat\rho_h}_1^2 + \norm{\theta_h-\hat\theta_h}_0^2 + \norm{\eta_h-\hat\eta_h}_1^2). \notag
\end{align}
Using the equivalence, we can consider the second to sixth terms on the right-hand-side of \eqref{eq:relentcollect} and estimate
\begin{align*}
 &\la \partial\psi_\theta(\rho_h,\theta_h,\eta_h|\hat\rho_h,\hat\theta_h,\hat\eta_h), \dt\hat\theta_h\ra + \la \partial_\rho\psi(\rho_h,\theta_h,\eta_h|\hat\rho_h,\hat\theta_h,\hat\eta_h), \dt\hat\rho_h\ra \\
 &+ \la \partial_\eta\psi(\rho_h,\theta_h,\eta_h|\hat\rho_h,\hat\theta_h,\hat\eta_h), \dt\hat\eta_h\ra  + \la \psi_\rho(\rho_h,\theta_h,\eta_h)-\psi_\rho(\hat\rho_h,\hat\theta_h,\hat\eta_h),1 \ra^2 + \norm{\theta_h-\hat\theta_h}_0^2  \\
& \leq C(1+\norm{\dt\hat\rho_h}_{0,\infty}+\norm{\dt\hat\theta_h}_{0,\infty}+\norm{\dt\hat\eta_h}_{0,\infty})\mathcal{W}_{\lambda}(\rho_h,\theta_h,\eta_h|\hat\rho_h,\hat\theta_h,\hat\eta_h)
\end{align*}
where the constants $C$ depends on $C_\infty,c_\infty$. The final result then follows from an application of the Gronwall lemma.

\section{Full discrete approximation:}\label{sec:fulldiscrete}
Let us now consider a fully discrete approximation using backward differences in time, together with splitting into convex and concave parts for the phase-fields $\rho,\eta$.  First, we introduce the following notations.
\subsection*{Time discretisation}
We partition the time interval $[0,T]$ into uniformly with step size $\tau>0$ and introduce $\Itau:=\{0=t^0,t^1=\tau,\ldots, t^{n_T}=T\}$, where $n_T=\tfrac{T}{\tau}$ is the absolute number of time steps. We denote by $\Pi^1_c(\Itau), \Pi^0(\Itau)$ the spaces of continuous piecewise linear and piecewise constant functions on $\Itau$. For a function $g\in\Pi^1_c(\tau)$ we denote by $g^{n+1},g^n$ the evaluation at $t^{n+1}$ and $t^n$, while for functions $g\in\Pi^0(\Itau)$ $g^{n+1}$ is the constant value on the interval $(t^n,t^{n+1}]$. Finally, we introduce the time difference and the discrete-time derivative via
\begin{equation*}
    d^{n+1}g = g^{n+1} - g^n, \qquad d^{n+1}_\tau g = \frac{g^{n+1}-g^n}{\tau}.
\end{equation*}
Note that in the case of continuous piecewise linear function, the discrete time derivative coincides with the normal time derivative. We consider the following discretisation.
\begin{problem} \label{prob:ful}
    Let $(\rho_{0,h},\theta_{0,h},\eta_{0,h}) \in \Vh\times\Vh^+\times\Vh$ be given. Find $(\rho_{h},\theta_{h},\eta_{h})\in \Pi^1_c(\Itau;\Vh\times\Vh^+\times\Vh)$ and $(\mu_{\rho,h},\mu_{\eta,h}) \in  \Pi^0(\Itau;\Vh\times\Vh)$  such that $(\rho_{h}^0,\theta_{h}^0,\eta_{h}^0)=(\rho_{0,h},\theta_{0,h},\eta_{0,h})$ and
such that for all $v_{1,h},v_{2,h},\xi_{h}, w_{1,h}, w_{2,h} \in  \Vh\times\Vh\times\Vh\times\Vh$ and for all $n\geq 0$, there holds
\begin{align}
&\la d_\tau^{n+1}\rho_h,v_{1,h} \ra + \la \LL_{11}\nabla \mu_{\rho,h}^{n+1}-\LL_{12}\nabla\theta_h^{n+1}+\mu_{\eta,h}^{n+1}\LL_{13},\nabla v_{1,h} \ra =0, \label{eq:pg1}\\   
&\la \mu_{\rho,h}^{n+1},v_{2,h} \ra - \gamma_\rho\la \nabla\rho_h^{n+1},\nabla v_{2,h} \ra - \la \partial_\rho \psi(\rho_h,\theta_h^{n+1},\eta_h),v_{2,h} \ra = 0, \label{eq:pg2}\\
&\la d_\tau^{n+1} e(\rho_h,\theta_h,\eta_h),\xi_h \ra + \la \LL_{12}\nabla \mu_{\rho,h}^{n+1}- \LL_{22}\nabla\theta_h^{n+1}+ \mu_{\eta,h}^{n+1}\LL_{23},\nabla \xi_h \ra =0,\label{eq:pg3} \\   
&\la d_\tau^{n+1}\eta_{h},w_{1,h} \ra + \la\LL_{13}\cdot\nabla\mu_{\rho,h}^{n+1}-\LL_{23}\cdot\nabla\theta_h^{n+1}+ \LL_{33}\mu_{\eta,h}^{n+1},w_{1,h} \ra = 0, \label{eq:pg4}\\
&\la \mu_{\eta,h}^{n+1},w_{2,h} \ra - \gamma_\eta\la \nabla\eta_h^{n+1},\nabla w_{2,h} \ra - \la \partial_\eta \psi(\rho_h,\theta_h^{n+1},\eta_h),w_{2,h} \ra = 0. \label{eq:pg5}
\end{align}
\end{problem}

We employ the convex-concave splitting for $\partial_\rho \psi, \partial_\eta\psi$, i.e.
\begin{align*}
  \partial_\rho \psi(\rho_h,\theta_h^{n+1},\eta_h)&=\partial_\rho\psi ^{vex}(\rho_h^{n+1},\theta_h^{n+1},\eta_h^{n+1}) + \partial_\rho\psi^{cav}(\rho_h^{n},\theta_h^{n+1},\eta_h^{n}) \\
  \partial_\eta \psi(\rho_h,\theta_h^{n+1},\eta_h) &= \partial_\eta\psi^{vex}(\rho_h^{n+1},\theta_h^{n+1},\eta_h^{n+1}) + \partial_\eta\psi^{cav}(\rho_h^{n},\theta_h^{n+1},\eta_h^{n}).
\end{align*}

\begin{remark}
Note that the structural properties are independent of the concrete time discretisation of $\LL$. One can evaluate $\LL$ at any linear combination of the new and old-time levels to obtain a first-order scheme.
\end{remark}

\begin{lemma}
   Let (A0)--(A5) hold, cf. Assumption \ref{ass} and $(\rho_h,\mu_{\rho,h},\theta_h,\eta_h,\mu_{\eta,h})$ a solution of Problem \ref{prob:ful}. Then this solution is mass-, energy-conservative and entropy-productive, i.e.
   \begin{align*}
    \la \rho_{h}^{n+1}, 1\ra &= \la \rho_{h}^0, 1\ra, \quad  \la e_{h}^{n+1}, 1\ra =\la e_{h}^{0}, 1\ra, \qquad \forall n\geq0, \\
   \la  s_{h}^{n+1},1\ra &=\la  s_{h}^{0},1\ra + \sum_{k=0}^{n_T-1}\mathcal{D}^{k}_{num}(\rho_h,\theta_h,\eta_h) + \tau\sum_{k=0}^{n_T-1}\mathcal{D}_\LL(\mu_{\rho,h}^{k+1},\theta_{h}^{k+1},\mu_{\eta,h}^{k+1}) \qquad \forall n\geq0.
   \end{align*}
   with numerical dissipation
   \begin{align*}
       \mathcal{D}^{n}_{num}(\rho_h,\theta_h,\eta_h) := & \frac{\gamma_\rho}{2}\norm{\nabla d^{n+1}\rho_h}^2 + \frac{\gamma_\eta}{2}\norm{\nabla d^{n+1}\eta_h}^2 - \tfrac{1}{2}\la \partial_{\theta\theta}\psi^{n,\xi_1,n}d^{n+1}\theta_h,d^{n+1}\theta_h \ra \\
 - & \tfrac{1}{2}\la (H_{(\rho_h,\eta_h)}^{cav,\chi_1,{n+1},\chi_2}-H^{vex,\chi_3,{n+1},\chi_4}_{(\rho_h,\eta_h)})d^{n+1}(\rho_h,\eta_h)^\top,d^{n+1}(\rho_h,\eta_h)^\top \ra \geq 0
   \end{align*}
   for some $\xi_1\in(\theta_h^n,\theta_h^{n+1}) , \chi_1,\chi_3\in (\rho_h^n,\rho_h^{n+1}) , \chi_2,\chi_4\in (\eta_h^n,\eta_h^{n+1}). $
\end{lemma}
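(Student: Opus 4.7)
\textbf{Conservation.} Mass and internal energy conservation are immediate: testing \eqref{eq:pg1} with $v_{1,h}=1$ makes all gradient terms vanish and yields $\la d_\tau^{n+1}\rho_h,1\ra=0$, which iterates to $\la \rho_h^{n+1},1\ra = \la\rho_h^0,1\ra$. An identical argument with $\xi_h=1$ in \eqref{eq:pg3} gives $\la e_h^{n+1},1\ra=\la e_h^0,1\ra$.

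\textbf{Entropy: two parallel computations.} On the one hand, testing \eqref{eq:pg1} with $\tau\mu_{\rho,h}^{n+1}$, \eqref{eq:pg3} with $\tau\theta_h^{n+1}$, and \eqref{eq:pg4} with $\tau\mu_{\eta,h}^{n+1}$ and forming the combination $-\eqref{eq:pg1}+\eqref{eq:pg3}-\eqref{eq:pg4}$, the mobility block assembles on the right-hand side into exactly the quadratic form $\tau\,\mathcal{D}_\LL(\mu_{\rho,h}^{n+1},\theta_h^{n+1},\mu_{\eta,h}^{n+1})$, producing the identity
\begin{equation*}
\la\theta_h^{n+1},d^{n+1}e\ra - \la\mu_{\rho,h}^{n+1},d^{n+1}\rho_h\ra - \la\mu_{\eta,h}^{n+1},d^{n+1}\eta_h\ra \;=\; \tau\,\mathcal{D}_\LL(\mu_{\rho,h}^{n+1},\theta_h^{n+1},\mu_{\eta,h}^{n+1}).
\end{equation*}
On the other hand, I compute $d^{n+1}s_h$ directly from the definition $s=\theta e-\psi-\tfrac{\gamma_\rho}{2}|\nabla\rho|^2-\tfrac{\gamma_\eta}{2}|\nabla\eta|^2$. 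The algebraic identity $\theta^{n+1}e^{n+1}-\theta^n e^n = \theta^{n+1}d^{n+1}e + e^n d^{n+1}\theta$ splits the $\theta e$ contribution, while the binomial identity $\tfrac12(|a|^2-|b|^2) = a\cdot(a-b) - \tfrac12|a-b|^2$ applied to $\nabla\rho$ and $\nabla\eta$ produces $\gamma_\rho\la\nabla\rho_h^{n+1},\nabla d^{n+1}\rho_h\ra$ plus the non-negative numerical dissipation pieces $\tfrac{\gamma_\rho}{2}\|\nabla d^{n+1}\rho_h\|^2$ and $\tfrac{\gamma_\eta}{2}\|\nabla d^{n+1}\eta_h\|^2$ that appear in $\mathcal{D}^n_{num}$.

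\textbf{Handling $\psi^{n+1}-\psi^n$ and conclusion.} The main work is to expand $\psi^{n+1}-\psi^n$ so that it matches the scheme's convex/concave split. I use the path
\begin{equation*}
\psi^{n+1}-\psi^n \;=\; [\psi(\rho^{n+1},\theta^{n+1},\eta^{n+1})-\psi(\rho^n,\theta^{n+1},\eta^n)] + [\psi(\rho^n,\theta^{n+1},\eta^n)-\psi(\rho^n,\theta^n,\eta^n)].
\end{equation*}
On the first bracket, with $\theta^{n+1}$ frozen, I expand $\psi_{vex}$ by Taylor with exact second-order remainder around $(\rho^{n+1},\eta^{n+1})$, and $\psi_{cav}$ around $(\rho^n,\eta^n)$; their first-order terms add to $[\partial_{(\rho,\eta)}\psi_{vex}^{\,{n+1}}+\partial_{(\rho,\eta)}\psi_{cav}^{\,n,\theta^{n+1}}]\cdot d^{n+1}(\rho,\eta)$, which are precisely the nonlinearities appearing in \eqref{eq:pg2}, \eqref{eq:pg5}, and the remainders combine to $\tfrac12(H^{cav}-H^{vex})(d^{n+1}(\rho,\eta))^2$ with intermediate points $\chi_1,\chi_2,\chi_3,\chi_4$. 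On the second bracket, Taylor in $\theta$ around $\theta^n$ uses $\partial_\theta\psi(\rho^n,\theta^n,\eta^n)=e^n$ and produces a first-order term $e^n\,d^{n+1}\theta$ that cancels against the analogous term from the $\theta e$ split above, leaving only the remainder $\tfrac12\partial_{\theta\theta}\psi(\rho^n,\xi_1,\eta^n)(d^{n+1}\theta)^2$. Substituting back and using \eqref{eq:pg2} tested with $d^{n+1}\rho_h$ and \eqref{eq:pg5} with $d^{n+1}\eta_h$ to replace the $\gamma_\rho\la\nabla\rho_h^{n+1},\nabla d^{n+1}\rho_h\ra+\la\partial_\rho\psi,d^{n+1}\rho_h\ra$ (and $\eta$-analog) blocks by $\la\mu_{\rho,h}^{n+1},d^{n+1}\rho_h\ra$ and $\la\mu_{\eta,h}^{n+1},d^{n+1}\eta_h\ra$, and combining with the $\mathcal{D}_\LL$ identity above, yields the per-step balance $\la d^{n+1}s_h,1\ra = \tau\,\mathcal{D}_\LL(\mu_{\rho,h}^{n+1},\theta_h^{n+1},\mu_{\eta,h}^{n+1}) + \mathcal{D}^n_{num}$. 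Non-negativity of $\mathcal{D}^n_{num}$ then follows from the concavity of $\psi$ in $\theta$ ($\partial_{\theta\theta}\psi\le 0$) and the signs $H^{vex}\succeq 0 \succeq H^{cav}$ from (A3). Summation over $n=0,\dots,n_T-1$ gives the stated cumulative entropy balance. The main obstacle is the bookkeeping of the Taylor expansion points: they must be chosen so that the emerging gradient terms reproduce exactly the implicit/explicit convex--concave splitting used in the scheme, and so that the second-order remainders assemble into the claimed form of $\mathcal{D}^n_{num}$.
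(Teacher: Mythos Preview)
Your proposal is correct and follows essentially the same approach as the paper's own proof: both test \eqref{eq:pg1}, \eqref{eq:pg3}, \eqref{eq:pg4} with $\mu_{\rho,h}^{n+1},\theta_h^{n+1},\mu_{\eta,h}^{n+1}$ to produce the $\mathcal{D}_\LL$ term, test \eqref{eq:pg2}, \eqref{eq:pg5} with $d^{n+1}\rho_h,d^{n+1}\eta_h$, and decompose $\psi^{n+1}-\psi^n$ via the intermediate state $\psi(\rho_h^n,\theta_h^{n+1},\eta_h^n)$ together with Taylor remainders in $\theta$ and in $(\rho,\eta)$ adapted to the convex--concave split. The only cosmetic difference is the order of presentation: the paper first expands $\la s^{n+1}-s^n,1\ra$ and then inserts the test functions, whereas you isolate the $\mathcal{D}_\LL$ identity up front.
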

\begin{proof}
Mass and energy conservation follows directly by setting $v_{1,h}=1=\xi_h$ in \eqref{eq:pg1}, \eqref{eq:pg3}. For entropy dissipation, we compute as follows
\begin{align*}
\la s^{n+1}-s^n, 1\ra =& \la e^{n+1}-e^n, \theta^{n+1}\ra + \la \theta_h ^{n+1}-\theta_h^n,e^n \ra - \la \psi^{n+1}-\psi^n,1\ra  \\
&- \frac{\gamma_\rho}{2}\norm{\nabla\rho^{n+1}}_0^2 + \frac{\gamma_\rho}{2}\norm{\nabla\rho^{n}}_0^2  -\frac{\gamma_\rho}{2}\norm{\nabla\eta^{n+1}}_0^2 + \frac{\gamma_\rho}{2}\norm{\nabla\eta^{n}}_0^2 \\
=& \la e^{n+1}-e^n, \theta^{n+1}\ra + \la \theta_h ^{n+1}-\theta_h^n,e^n \ra- \la \psi^{n+1} - \psi^n, 1 \ra \\
&- \la \partial_\rho \psi(\rho_h,\theta_h^{n+1},\eta_h),\rho_h^{n+1}-\rho_h^n \ra - \gamma_\rho\la \nabla \rho_h^{n+1}, \nabla(\rho_h^{n+1}- \rho_h^n)\ra \\
& - \la \partial_\eta \psi(\rho_h,\theta_h^{n+1},\eta_h),\eta_h^{n+1}- \eta_h^n \ra - \gamma_\eta\la \nabla \eta_h^{n+1},\nabla( \eta_h^{n+1}- \eta_h^n)\ra \\
& + \la \partial_\rho \psi(\rho_h,\theta_h^{n+1},\eta_h),\rho_h^{n+1}- \rho_h^n \ra + \la \partial_\eta\psi(\rho_h,\theta_h^{n+1},\eta_h),\eta_h^{n+1}- \eta_h^n\ra\\
&+ \frac{\gamma_\rho}{2}\norm{\nabla(\rho_h^{n+1}- \rho_h^n)}^2 + \frac{\gamma_\eta}{2}\norm{\nabla(\eta_h^{n+1}- \eta_h^n)}^2 . 
\end{align*}
Inserting $v_{2,h}=\rho_h^{n+1}-\rho_h^n$ and $w_{2,h}=\eta_h^{n+1}- \eta_h^n$ yield
\begin{align*}
 \la s^{n+1}-s^n, 1\ra \geq& \la e^{n+1}-e^n, \theta^{n+1}\ra - \la \mu_{\rho,h}^{n+1},\rho_h^{n+1}-\rho_h^n \ra - \la \mu_{\eta,h}^{n+1},\eta_h^{n+1}-\eta_h^n \ra  \\
&- \la \psi(\rho_h^n,\theta_h^{n+1},\eta_h^n) - \psi(\rho_h^n,\theta_h^{n},\eta_h^n) - \partial_\theta\psi(\rho_h^n,\theta_h^{n},\eta_h^n)(\theta^{n+1}-\theta^n),1 \ra \\
& - \la \psi(\rho_h^{n+1},\theta_h^{n+1},\eta_h^{n+1})- \psi(\rho_h^n,\theta_h^{n+1},\eta_h^n)  -  \partial_\rho \psi(\rho_h,\theta_h^{n+1},\eta_h)(\rho_h^{n+1}- \rho_h^n) \\
& -  \partial_\eta \psi(\rho_h,\theta_h^{n+1},\eta_h)(\eta_h^{n+1}- \eta_h^n),1\ra  + \frac{\gamma_\rho}{2}\norm{\nabla(\rho_h^{n+1}- \rho_h^n)}^2 + \frac{\gamma_\eta}{2}\norm{\nabla(\eta_h^{n+1}- \eta_h^n)}^2 .
\end{align*}
Using the convexity and concavity properties of $\psi$, i.e. (A3) and the convex-concave splitting we find
\begin{align*}
\la s^{n+1}-s^n, 1\ra \geq & - \la \psi(\rho_h^n,\theta_h^{n+1},\eta_h^n) - \psi(\rho_h^n,\theta_h^{n},\eta_h^n) - \partial_\theta\psi(\rho_h^n,\theta_h^{n},\eta_h^n)(\theta^{n+1}-\theta^n),1 \ra \\   
&-\la \psi(\rho_h^{n+1},\theta_h^{n+1},\eta_h^{n+1})- \psi(\rho_h^n,\theta_h^{n+1},\eta_h^n) - \partial_\rho \psi(\rho_h,\theta_h^{n+1},\eta_h)(\rho_h^{n+1}- \rho_h^n) \\
 & \quad-  \partial_\eta \psi(\rho_h,\theta_h^{n+1},\eta_h)(\eta_h^{n+1}- \eta_h^n),1\ra \\
 =& - \tfrac{1}{2}\la \partial_{\theta\theta}\psi^{n,\xi_1,n}d^{n+1}\theta,d^{n+1}\theta \ra \\
 &-  \tfrac{1}{2}\la (H_{(\rho,\eta)}^{cav,\chi_1,{n+1},\chi_2}-H^{vex,\chi_3,{n+1},\chi_4}_{(\rho,\eta)})(d^{n+1}\rho,d^{n+1}\eta)^\top,(d^{n+1}\rho,d^{n+1}\eta)^\top \ra \\
  \geq & 0.
\end{align*}
Therefore, obtained by direct estimation
\begin{align*}
 \la s^{n+1}-s^n, 1\ra &\geq \la e^{n+1}-e^n, \theta^{n+1}\ra - \la \mu_{\rho,h}^{n+1},\rho_h^{n+1}-\rho_h^n \ra - \la \mu_{\eta,h}^{n+1},\eta_h^{n+1}- \eta_h^n \ra  .
 \end{align*}
Inserting $v_{1,h}=\mu_{\rho,h}^{n+1}, \xi_h=\theta^{n+1}_h, w_{1,h}=\mu_{\eta,h}^{n+1}$ yields
\begin{align*}
\la s^{n+1}-s^n,1\ra = \mathcal{D}_{num}^{n+1}(\phi_h,\theta_h,\eta_h) +  \tau\mathcal{D}_\LL( \mu_{\rho,h}^{n+1},\theta_h^{n+1},\mu_{\eta,h}^{n+1}).
\end{align*}
The statement of the theorem is followed by a summation over the time steps.
\end{proof}

\begin{remark}
  The existence of discrete solutions, as well as discretisation parameter independent bounds, is a more delicate topic. In the future, we like to consider this in more detail. However, for a suitable form of the Helmholtz free energy $\psi$, this should be possible by a suitable fixed point argument. 
\end{remark}

In a similar spirit as in the semi-discrete case, we can employ the relative entropy to derive a stability result with respect to a perturbed system. For given $(\hat\rho_h,\hat\theta_h,\hat\eta_h)\in\Pi^1_c(\Itau;\Vh\times\Vh^+\times\Vh)$ and $(\hat\mu_{\rho,h},\hat\mu_{\eta,h})\in \Pi^0(\Itau;\Vh\times\Vh)$ we define the residuals $r_{i,h}\in \Pi^0(\Itau;\Vh), i=1,\ldots, 4$ for all test function $(v_{1,h},v_{2,h},\xi_h,w_{1,h},w_{2,h})\in \Vh\times\Vh\times\Vh^+\times\Vh\times\Vh$ via the following perturbed system
\begin{align}
&\la d_\tau^{n+1}\hat\rho_h,v_{1,h} \ra + \la \LL_{11}\nabla \hat\mu_{\rho,h}^{n+1}- \LL_{12}\nabla\hat\theta_h^{n+1}+ \hat\mu_{\eta,h}^{n+1}\LL_{13},\nabla v_{1,h} \ra =\la r_{1,h}^{n+1},v_{1,h} \ra, \label{eq:ppg1}\\   
&\la \hat\mu_{\rho,h}^{n+1},v_{2,h} \ra - \gamma_\rho\la \nabla\hat\rho_h^{n+1},\nabla v_{2,h} \ra - \la \partial_\rho \psi(\hat\rho_h,\hat\theta_h^{n+1},\hat\eta_h),v_{2,h} \ra = \la r_{2,h}^{n+1},v_{2,h} \ra, \label{eq:ppg2}\\
&\la d_\tau^{n+1} e(\hat\rho_h,\hat\theta_h,\hat\eta_h),\xi_h \ra + \la \LL_{12}\nabla \hat\mu_{\rho,h}^{n+1}- \LL_{22}\nabla\hat\theta_h^{n+1}+ \hat\mu_{\eta,h}^{n+1}\LL_{23},\nabla \xi_h \ra =\la r_{3,h}^{n+1},\xi_{h} \ra,\label{eq:ppg3} \\   
&\la d_\tau^{n+1}\hat\eta_{h},w_{1,h} \ra + \la\LL_{13}\cdot\nabla\hat\mu_{\rho,h}^{n+1}-\LL_{23}\cdot\nabla\hat\theta_h^{n+1}+\LL_{33}\hat\mu_{\eta,h}^{n+1},w_{1,h} \ra = \la r_{4,h}^{n+1},w_{1,h} \ra, \label{eq:ppg4}\\
&\la \hat\mu_{\eta,h}^{n+1},w_{2,h} \ra - \gamma_\eta\la \nabla\hat\eta_h^{n+1},\nabla w_{2,h} \ra - \la \partial_\eta \psi(\hat\rho_h,\hat\theta_h^{n+1},\hat\eta_h),w_{2,h} \ra = \la r_{5,h}^{n+1},w_{2,h} \ra. \label{eq:ppg5}
\end{align}

With the same ansatz as in the semi-discrete case, we can deduce the following result.

\begin{theorem}\label{thm:stabfullgron}
Let $(\rho_h,\mu_{\rho,h},\theta_h,\eta_h,\mu_{\eta,h})$ be a solution of Problem \ref{prob:ful} and $(\hat\rho_h,\hat\mu_{\rho,h},\hat\theta_h,\hat\eta_h,\hat\mu_{\eta,h})$ defining the residuals via \eqref{eq:ppg1}-\eqref{eq:ppg5}. Furthermore, we assume the following uniform bounds
\begin{align*}
  \norm{(\rho_h,\theta_h,\eta_h)}_{L^\infty(\Omega_T)} &+ \norm{(\hat\rho_h,\hat\theta_h,\hat\eta_h)}_{L^\infty(\Omega_T)} + \norm{(d_\tau^{n+1}\hat\rho_h,d_\tau^{n+1}\hat\theta_h,d_\tau^{n+1}\hat\eta_h)}_{L^\infty(\Omega_T)} \leq C_\infty, \\  
  \theta_h, \hat\theta_h &\geq c_\infty \qquad \text{ for every } t\in [0,T], 
\end{align*}
with positive constants $c_\infty,C_\infty$ independent of $h,\tau$. Then the following result holds  
\begin{align}
 \mathcal{W}_{\lambda}(\rho_h,\theta_h,\eta_h|\hat\rho_h,\hat\theta_h,\eta_h)(t^{n+1}) &+ \sum_{k=0}^n\frac{\tau}{2}\mathcal{D}_\LL(\mu_{\rho,h}^{k+1}-\hat\mu_{\rho,h}^{n+1},\theta_h^{k+1}-\hat\theta_h^{k+1},\mu_{\eta,h}^{k+1}-\hat\mu_{\eta,h}^{k+1}) \notag\\
 &+ \sum_{k=0}^n\tilde{\mathcal{D}}_{num}^{k+1}(\rho_h-\hat\rho_h,\theta_h-\hat\theta_h,\eta_h-\hat\eta_h)\label{eq:relentfullcoll}\\
 \leq C\mathcal{W}_{\lambda}(\rho_h,\theta_h,\eta_h|\hat\rho_h,\hat\theta_h,\hat\eta_h)(0)& + C\tau^2 \notag\\
+\tau\sum_{k=0}^n C(\LL)(\norm{r_{1,h}^{k+1}}_{-1}^2 +& \norm{r_{2,h}^{k+1}}_1^2+ \norm{r_{3,h}^{k+1}}_{-1}^2 + \norm{r_{4,h}^{k+1}}_0^2 + \norm{r_{5,h}^{k+1}}_0^2). \notag
\end{align} 
\end{theorem}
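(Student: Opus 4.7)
The plan is to mimic the continuous-in-time argument of Lemma \ref{lem:proof_stab_semi}, replacing time derivatives by backward differences $d_\tau^{n+1}$ and keeping careful track of the second-order remainders that arise both from the convex/concave splitting and from the nonlinearity of $e(\rho,\theta,\eta)$ in $\theta$. First I would split the relative entropy as
\begin{equation*}
\mathcal{W}_\lambda = \mathcal{W} + \tfrac{\lambda}{2}(\|\rho_h-\hat\rho_h\|_0^2 + \|\eta_h-\hat\eta_h\|_0^2),
\end{equation*}
and write the discrete increment $\mathcal{W}_\lambda(t^{n+1}) - \mathcal{W}_\lambda(t^n)$ as a sum of (i) interface contributions $\gamma_\rho\la \nabla d^{n+1}(\rho_h-\hat\rho_h),\nabla\rho_h^{n+1}-\nabla\hat\rho_h^{n+1}\ra$ and its $\eta$ analogue, plus (ii) the increments of $\psi$ and of $\partial_\theta\psi=e$ evaluated between the two solutions. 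For (i) I would use the algebraic identity $\la a-b,a\ra = \tfrac12\|a\|^2-\tfrac12\|b\|^2 + \tfrac12\|a-b\|^2$, so that two nonnegative numerical-dissipation terms of the form $\tfrac{\gamma_\rho}{2}\|\nabla d^{n+1}(\rho_h-\hat\rho_h)\|_0^2$ appear on the left-hand side and feed into $\tilde{\mathcal{D}}_{num}^{k+1}$.

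Next, I would expand the $\psi$-increments by discrete Taylor expansions in the spirit of the entropy-production calculation of the previous lemma: for each pair of consecutive states one obtains the ``exact'' differences controlled by $\partial_\rho\psi$, $\partial_\eta\psi$, $\partial_\theta\psi=e$, plus quadratic remainders involving the Hessian of $\psi$ evaluated at intermediate points, exactly as in the entropy-production proof. The dominant terms cancel when one tests equations \eqref{eq:pg1}--\eqref{eq:pg5} with $v_{1,h}=\mu_{\rho,h}^{n+1}-\hat\mu_{\rho,h}^{n+1}$, $v_{2,h}=d^{n+1}(\rho_h-\hat\rho_h)$, $\xi_h=\theta_h^{n+1}-\hat\theta_h^{n+1}$, $w_{1,h}=\mu_{\eta,h}^{n+1}-\hat\mu_{\eta,h}^{n+1}$, $w_{2,h}=d^{n+1}(\eta_h-\hat\eta_h)$ and subtracts the corresponding perturbed identities \eqref{eq:ppg1}--\eqref{eq:ppg5}. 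The dissipation matrix $\LL$ then provides $\tau\,\mathcal{D}_\LL(\mu_{\rho,h}^{n+1}-\hat\mu_{\rho,h}^{n+1},\theta_h^{n+1}-\hat\theta_h^{n+1},\mu_{\eta,h}^{n+1}-\hat\mu_{\eta,h}^{n+1})$ with a positive sign on the left-hand side, up to absorbable $\delta$-perturbations from the residuals $r_{i,h}^{n+1}$ treated by Young's inequality as in Lemma \ref{lem:proof_stab_semi}.

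The main obstacle, as advertised in the introduction and deferred to Appendix \ref{app:1}, is that because $\hat\rho_h,\hat\theta_h,\hat\eta_h$ are only piecewise linear in time while $\hat\mu_{\rho,h},\hat\mu_{\eta,h}$ are piecewise constant, the discrete analogues of the ``drift'' terms $\la \partial_\rho\psi(\rho|\hat\rho),\dt\hat\rho\ra$ etc.\ no longer cancel cleanly. One must insert intermediate evaluations $\psi(\rho_h^{n},\theta_h^{n+1},\eta_h^{n})$ and exploit the convex/concave splitting used in Problem \ref{prob:ful} together with discrete Taylor remainders of the form $\tfrac12 \partial_{\theta\theta}\psi^{n,\xi_1,n}(d^{n+1}\theta_h)^2$ and the Hessian-difference remainders appearing in $\mathcal{D}_{num}^{n+1}$. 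These remainders are of size $O(\tau\, \| d^{n+1}(\cdot)\|_0^2)$, which under the assumed uniform $L^\infty$-bounds on the discrete time-differences of $(\hat\rho_h,\hat\theta_h,\hat\eta_h)$ yield the $C\tau^2$ contribution and terms absorbable into $C\,\mathcal{W}_\lambda(t^{k+1})$ via the equivalence \eqref{eq:rel_ent_equiv} (whose proof extends verbatim to the fully discrete setting because it only uses the $L^\infty$-bounds and $\theta_h,\hat\theta_h\ge c_\infty$).

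Finally I would estimate the quadratic part $\tfrac{\lambda}{2}d^{n+1}(\|\rho_h-\hat\rho_h\|_0^2+\|\eta_h-\hat\eta_h\|_0^2)$ exactly as in the semi-discrete case, sum the resulting inequality from $k=0$ to $n$ so that the $\gamma_\rho, \gamma_\eta$ and dissipation terms telescope/accumulate correctly, and close the estimate by the discrete Gronwall lemma, which is applicable once the factor in front of $\mathcal{W}_\lambda(t^{k+1})$ is shown to be bounded uniformly in $h,\tau$ through the hypotheses on $c_\infty$ and $C_\infty$. This produces precisely the bound \eqref{eq:relentfullcoll}; the technical bookkeeping of the remainder and cross terms is what makes the proof substantially longer than its semi-discrete counterpart and motivates postponing it to Appendix \ref{app:1}.
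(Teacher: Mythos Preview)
Your sketch is correct and follows essentially the same route as the paper's proof in Appendix~\ref{app:1}: the same splitting of $\mathcal{W}_\lambda$, the same choice of discrete test functions (up to the minor point that the paper tests with $\mu_{\rho,h}^{n+1}-\hat\mu_{\rho,h}^{n+1}+r_{2,h}^{n+1}$ rather than just the difference), the same Taylor-remainder bookkeeping around the convex/concave splitting and the intermediate evaluations $\psi(\rho_h^n,\theta_h^{n+1},\eta_h^n)$, and the same closure via \eqref{eq:rel_ent_equiv} and the discrete Gronwall lemma. The paper's appendix carries out exactly the lengthy remainder analysis you anticipate, organising it into a term $\mathcal{R}$ that is eventually bounded by $C\tau(\mathcal{W}_\lambda^{n}+\mathcal{W}_\lambda^{n+1})+C\tau^3$, which after summation gives the $C\tau^2$ you predicted.
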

\begin{proof}
 The main ideas of the proof are analogous to the semi-discrete case. For completeness, we give the proof in the appendix.  Note that the main new difficulty lies in the optimal analysis of the non-conformity error of the time discretisation.
\end{proof}

As in the semi-discrete case, the uniform bounds on the discrete solutions are typically not available independent of $h,\tau$. 
\begin{remark}
 In principle, one could perform the error analysis based on the above stability result. Therefore, one has to identify the perturbed solution as a suitable projection of a sufficiently regular solution of the continuous problem. The residuals can then be identified and estimated using projection errors and the relative entropy, cf. \cite{Brunk23,Brunk23NS} for the error analysis of another system in this spirit. 
\end{remark}

\section{Numerical tests}\label{sec:num}
For confirmation of our theoretical findings, we now present some computational results and convergence rates for a typical test problem.

\subsection*{Model problem}

We consider the domain $\Omega=(0,1)^2$, which is identified with the two-torus $\mathbb{T}^2$, i.e., \eqref{eq:noniso1}--\eqref{eq:noniso5} is complemented by periodic boundary conditions. 
We further set $T_f=0.16$ for the final time and choose the  following initial conditions
\begin{equation*}
 \rho_0 =\eta_0 = \tfrac{1}{2}+\tfrac{1}{100}\cos(2\pi x)\cos(2\pi y), \quad 
 \theta_0 = 1 + \tfrac{6}{10}\sin(2\pi x)\sin(2\pi y).
\end{equation*}
With the Helmholtz potential
\begin{align*}
 \Psi(\rho,\theta,\eta) =& \log(\theta) + \frac{\gamma_\rho}{2}\snorm{\nabla\rho}^2 + \frac{\gamma_\eta}{2}\snorm{\nabla\eta}^2 +  (C_1\theta-C_2)\rho^2(1-\rho)^2 + (D_1\theta-D_2)[\rho^2 \\
 &+ 6(1-\rho)(\eta^2 + (1-\eta)^2) - 4(2-\rho)(\eta^3 + (1-\eta)^3)  + 3(\eta^2 + (1-\eta)^2 )^2 ]
\end{align*}
which implies that in the original variables, we consider the Helmholtz free energy
\begin{align*}
f(\rho,T,\eta) =& -T\log(T) + \frac{\gamma_\rho T}{2}\snorm{\nabla\rho}^2 + \frac{\gamma_\eta T}{2}\snorm{\nabla\eta}^2 +  (C_1-C_2T)\rho^2(1-\rho)^2 + (D_1-D_2T)[\rho^2 \\
 &+ 6(1-\rho)(\eta^2 + (1-\eta)^2) - 4(2-\rho)(\eta^3 + (1-\eta)^3)  + 3(\eta^2 + (1-\eta)^2 )^2 ]   .
\end{align*}
For the diffusion matrix, we choose 
\begin{align*}
    \LL = \begin{pmatrix}
        \tfrac{1}{10}\Id_{2\times 2} & 0\Id_{2\times 2} & 0\Id_{2\times 1} \\
        0\Id_{2\times 2} &  \tfrac{1}{10}\Id_{2\times 2} & 0\Id_{2\times 1} \\
        0\Id_{1\times 2} & 0\Id_{1\times 2} & 10
    \end{pmatrix}
\end{align*}
The remaining model parameters are chosen as $C_1=2, C_2=1, D_1=0.141, D_2=0.062, \gamma_\rho=\gamma_\eta=0.001$. 

\subsection*{Convergence results}

We now discuss the convergence rates observed in our computations.
Since no analytical solution is available, the discretisation error is estimated by comparing the computed solutions $(\phi_{h,\tau},\mu_{\rho,h,\tau},\theta_{h,\tau},\eta_{h,\tau},\mu_{\eta,h,\tau})$ with those computed on uniformly refined grids, i.e. $(\phi_{h/2,\tau/2},\mu_{\rho,h/2,\tau/2},\theta_{h/2,\tau/2},\eta_{h/2,\tau/2},\mu_{\eta,h/2,\tau/2})$.
The error quantities for the fully-discrete scheme, which we report in the following, are defined as 
\begin{align*}
e_{h,\tau} &= \norm*{\rho_{h,\tau} - \rho_{h/2,\tau/2}}_{L_\tau^\infty(H^1)}^2 + \norm*{\theta_{h,\tau} - \theta_{h/2,\tau/2}}_{L_\tau^\infty(L^2)}^2 + \norm*{\eta_{h,\tau} - \eta_{h/2,\tau/2}}_{L_\tau^\infty(H^1)}^2 \\
&\quad+ \norm*{\mu_{\rho,h,\tau} - \mu_{\rho,h/2,\tau/2}}_{L^2(H^1)}^2+ \norm*{\theta_{h,\tau} - \theta_{h/2,\tau/2}}_{L^2(H^1)}^2 + \norm*{\mu_{\eta,h,\tau} - \mu_{\eta,h/2,\tau/2}}_{L^2(L^2)}^2.
\end{align*}
Here, we employed the time-discrete norm
\begin{align*}
 \norm*{g_{h,\tau} - g_{h/2,\tau/2}}_{L_\tau^\infty(X)}^2:=\max_{t^n\in\Itau} \norm{g_{h,\tau}(t^n) - g_{h/2,\tau/2}(t^n)}_X^2.  
\end{align*}
It should be noted that in the case of uniform bounds on the solutions and the perturbed system, this is equivalent to the error measure which is provided by the relative entropy by Theorem \ref{thm:stabfullgron}.
In Table~\ref{tab:rates_time_chns1} and Table~\ref{tab:rates_time_chns2}, we report the results of our computations obtained on a sequence of uniformly refined meshes with mesh size $h_k=2^{-k-1}$, $k=0,\ldots,6$, and time steps $\tau_k =0.001\cdot h_k$. 
We observe second-order convergence for the squared norms of the errors in all solution components, which yields, as expected, a first-order scheme with respect to the energy norm. 

\begin{table}[htbp!]
\centering
\small
\caption{Part 1: Errors and experimental orders of convergence. \label{tab:rates_time_chns1}} 
\begin{tabular}{|c||c|c|c|c|c|c|c|c|c|c}
$ k $ & $ e_{h,\tau} $  &  eoc & $e^\rho_{h,\tau}$ & eoc & $e^\theta_{h,\tau}$ & eoc & $e^{\eta}_{h,\tau}$ & eoc  \\
\hline
$ 0 $ & $4.66 \cdot 10^{-1}$  &   ---    & $3.08 \cdot 10^{-1}$  &   ---  & $9.32\cdot 10^{-3}$ &  --- & $9.29\cdot 10^{-3}$ &  --- \\
$ 1 $ & $2.74 \cdot 10^{-1}$  &   0.76   & $2.13 \cdot 10^{-1}$  & 0.53   & $8.03\cdot 10^{-4}$ & 3.54 & $5.00\cdot 10^{-3}$ &  0.89 \\
$ 2 $ & $8.97 \cdot 10^{-2}$  &   1.61   & $7.29 \cdot 10^{-2}$  & 1.55   & $5.55\cdot 10^{-5}$ & 3.86 & $1.50\cdot 10^{-3}$ &  1.74 \\
$ 3 $ & $2.69 \cdot 10^{-2}$  &   1.74   & $2.26 \cdot 10^{-2}$  & 1.69   & $3.64\cdot 10^{-6}$ & 3.93 & $3.99\cdot 10^{-4}$ &  1.91 \\
$ 4 $ & $7.89 \cdot 10^{-3}$  &   1.77   & $6.79 \cdot 10^{-3}$  & 1.73   & $2.75\cdot 10^{-7}$ & 3.73 & $1.02\cdot 10^{-4}$ &  1.97 \\
$ 5 $ & $2.19 \cdot 10^{-3}$  &   1.85   & $1.91 \cdot 10^{-3}$  & 1.83   & $3.53\cdot 10^{-8}$ & 2.96 & $2.58\cdot 10^{-5}$ &  1.99 \\
\end{tabular}
\end{table}

\begin{table}[htbp!]
\centering
\small
\caption{Part 2: Errors and experimental orders of convergence. \label{tab:rates_time_chns2}} 
\begin{tabular}{|c||c|c|c|c|c|c|}
$ k $ & $ e^{\mu_\rho}_{h,\tau} $  &  eoc & $e^{\nabla\theta}_{h,\tau}$ & eoc & $e^{\mu_\eta}_{h,\tau}$ & eoc  \\
\hline
$ 0 $   & $8.44 \cdot 10^{-3}$  & ---    & $1.31\cdot 10^{-1}$ &  --- & $2.94\cdot 10^{-6}$ &  ---   \\
$ 1 $   & $3.92 \cdot 10^{-3}$  & 1.11   & $5.15\cdot 10^{-2}$ & 1.34 & $1.02\cdot 10^{-6}$ &  1.52  \\
$ 2 $   & $1.20 \cdot 10^{-3}$  & 1.70   & $1.41\cdot 10^{-2}$ & 1.87 & $1.62\cdot 10^{-7}$ &  2.66  \\
$ 3 $   & $3.47 \cdot 10^{-4}$  & 1.80   & $3.60\cdot 10^{-3}$ & 1.97 & $2.49\cdot 10^{-8}$ &  2.70  \\
$ 4 $   & $9.46 \cdot 10^{-5}$  & 1.86   & $9.04\cdot 10^{-4}$ & 1.99 & $4.08\cdot 10^{-9}$ &  2.50  \\
$ 5 $   & $2.53 \cdot 10^{-5}$  & 1.92   & $2.26\cdot 10^{-4}$ & 2.00 & $8.93\cdot 10^{-10}$ & 2.30  \\
\end{tabular}
\end{table}

\subsection{Applied experiment}

In this section, we consider the influence of the temperature on the evolution of the coupled phase-field system. Therefore, we consider the same parameters as before but with a slightly different diffusion matrix
\begin{align*}
    \LL = \begin{pmatrix}
        \tfrac{1}{10}\Id_{2\times 2} & 0\Id_{2\times 2} & 0\Id_{2\times 1}\\
        0\Id_{2\times 2} &  \tfrac{1}{1000}\Id_{2\times 2} & 0\Id_{2\times 1} \\
        0\Id_{1\times 2} & 0\Id_{1\times 2} & 1000.
    \end{pmatrix}
\end{align*}
and the initial data
\begin{align*}
w_1(x,y) &= \sqrt{ (x-\tfrac{11}{40})^2 + (y-\tfrac{1}{2})^2} - \tfrac{1}{10}, \quad w_2(x,y)= \sqrt{(x-\tfrac{5}{8})^2 + (y-\tfrac{1}{2})^2}- \tfrac{1}{5}, \\
 \rho_0(x,y) &= 1-\tfrac{1}{2}\tanh\left(\frac{\max(-w_1,w_2)}{\sqrt{2\gamma_\rho}}\right) -\tfrac{1}{2}\tanh\left(\frac{\max(-w_2,w_1)}{\sqrt{2\gamma_\rho}}\right), \\
 \eta_0(x,y) &= \tfrac{1}{2}\rho_0(x,y)\chi(x\geq \tfrac{70}{200}) - \tfrac{1}{2}\rho_0(x,y)\chi(x < \tfrac{70}{200}), \\
 \theta_{0,A}(x,y)&=1, \quad \theta_{0,B}(x,y)=1 + \tfrac{3}{5}\sin(2\pi y), \quad \theta_{0,C}(x,y)= 1- \tfrac{3}{5}\sin(2\pi y)
\end{align*}
The three different initial temperature fields (A),(B), and (C) defined above are chosen in order to allow us to deduce the effects of the temperature in the coupled system. The evolution up to time $t=10$ is shown in Figures \ref{fig:evorho}--\ref{fig:evotheta}. We can see that the conserved phase-field, cf. Figure \ref{fig:evorho}, $\rho$ evolves to the same final state at $t=10$, but with different speed. While the non-conserved phase-field, cf. Figure \ref{fig:evoeta}, we see that experiments (A), and (B) behave quite similarly, in (C) we see a completely different behaviour. The inverse temperature $\theta$,  cf. Figure \ref{fig:evotheta}, evolves in all three cases quite fast to an equilibrium value, which is determined by the initial internal energy. In the sintering context, one can obtain information on the two different grains by the meta-quantity $\rho(2\eta-1)$, which is $1$ for one grain and $-1$ for the other grain. The evolution is depicted in  Figure \ref{fig:evophase}, where we can clearly see that one grain absorbs the other grain over time, and this process is only finished in experiment (C), while in the other two cases, this is still an active process. 

In conclusion, the temperature profile allows for change in the time scales of the process and enhances or suppresses the evolution.

\begin{figure}[htbp!]
\centering
\footnotesize
\begin{tabular}{cccc}
    \includegraphics[trim={1.5cm 0.0cm 3.0cm 0.5cm},clip,scale=0.32]{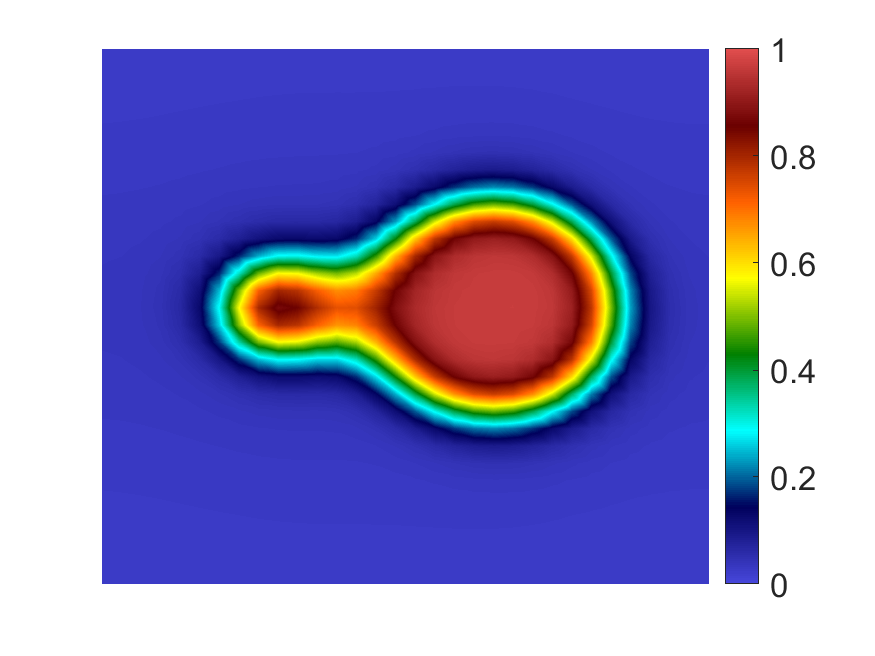} 
    &
    \includegraphics[trim={2.0cm 0.0cm 3.0cm 0.5cm},clip,scale=0.32]{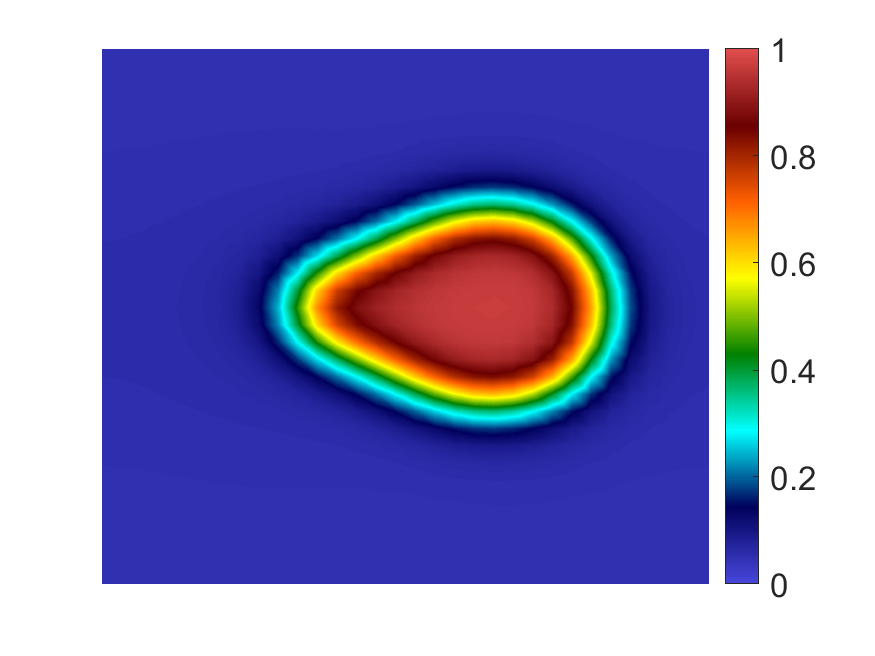}  
    &
    \includegraphics[trim={2.0cm 0.0cm 3.0cm 0.5cm},clip,scale=0.32]{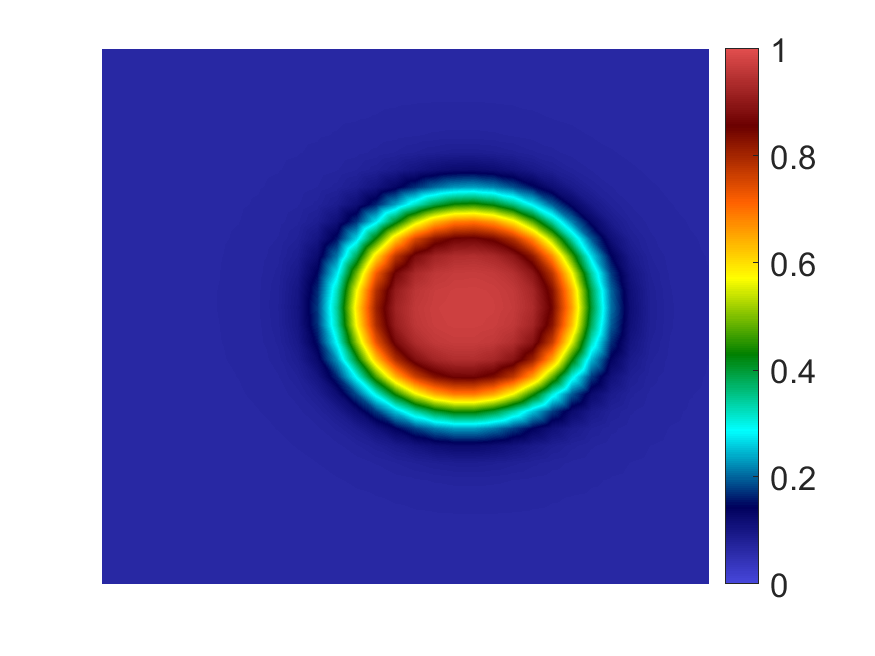}
    &
    \includegraphics[trim={2.0cm 0.0cm 0.0cm 0.5cm},clip,scale=0.32]{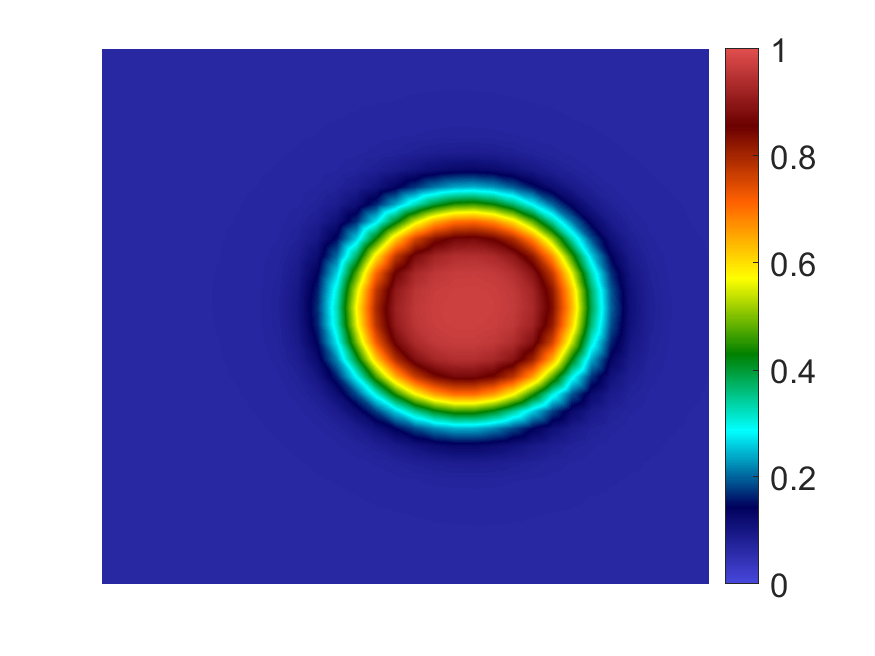} \\[-0.5em]
    \includegraphics[trim={1.5cm 0.0cm 3.0cm 0.5cm},clip,scale=0.32]{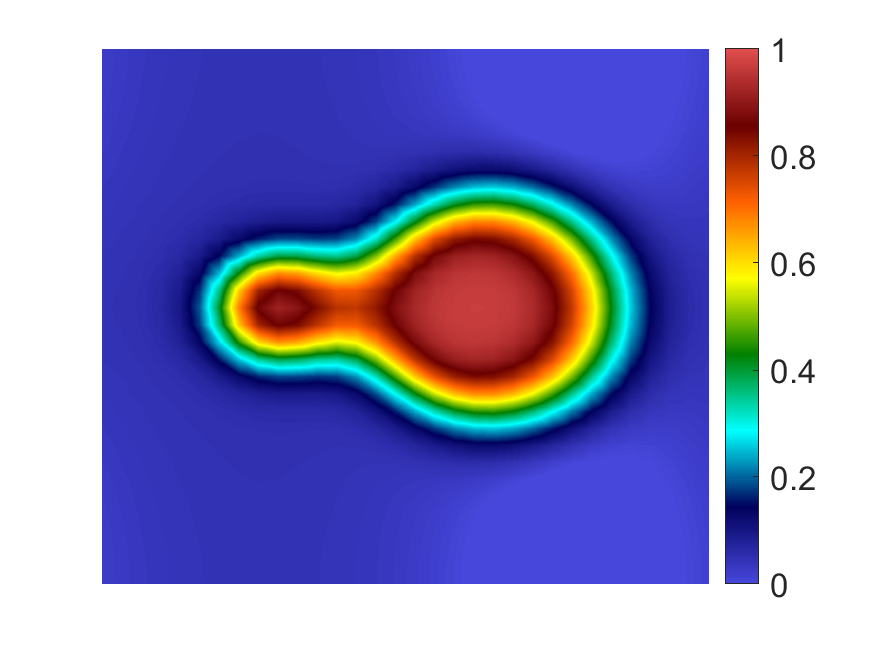} 
    &
    \includegraphics[trim={2.0cm 0.0cm 3.0cm 0.5cm},clip,scale=0.32]{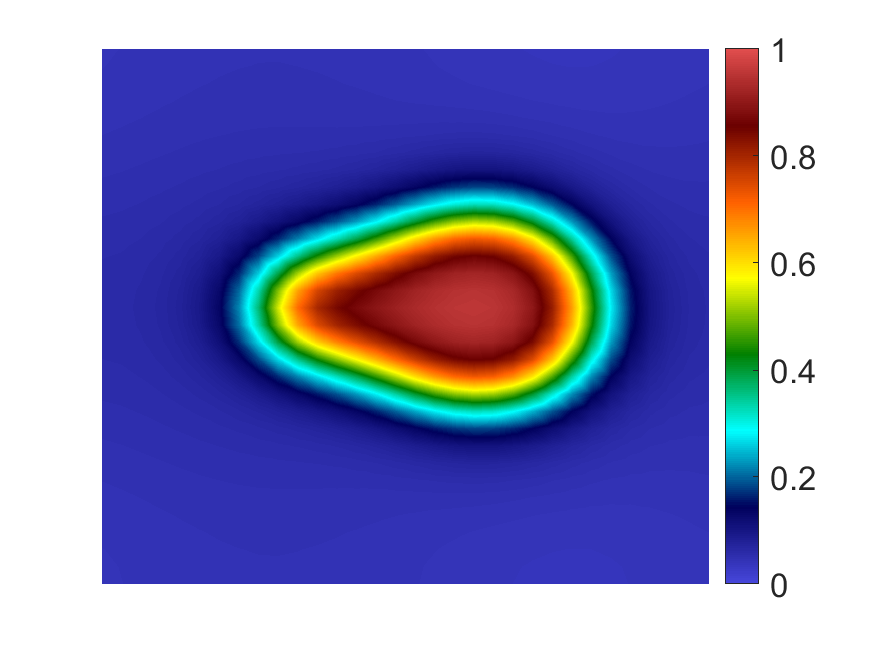}  
    &
    \includegraphics[trim={2.0cm 0.0cm 3.0cm 0.5cm},clip,scale=0.32]{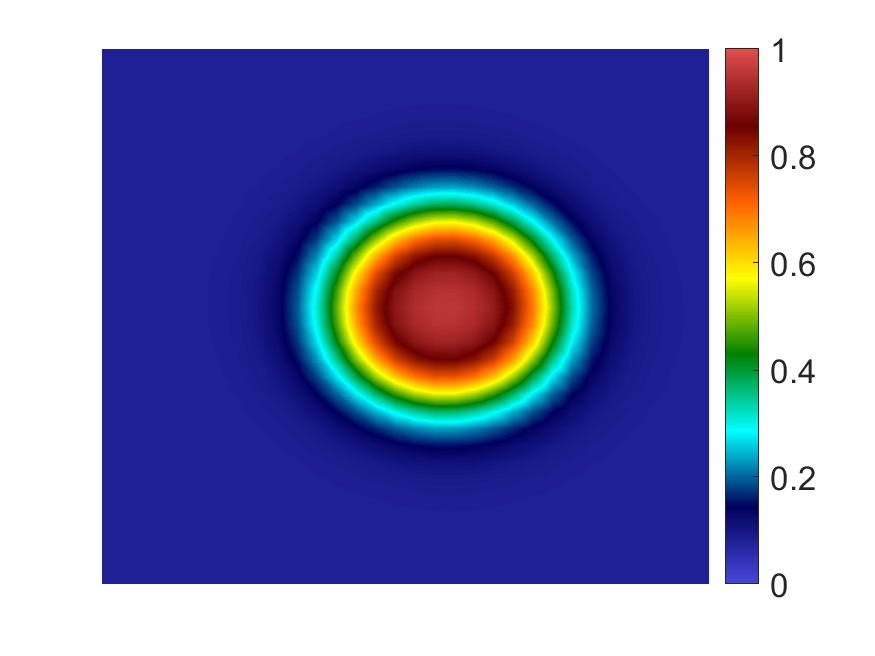}
    &
    \includegraphics[trim={2.0cm 0.0cm 0.0cm 0.5cm},clip,scale=0.32]{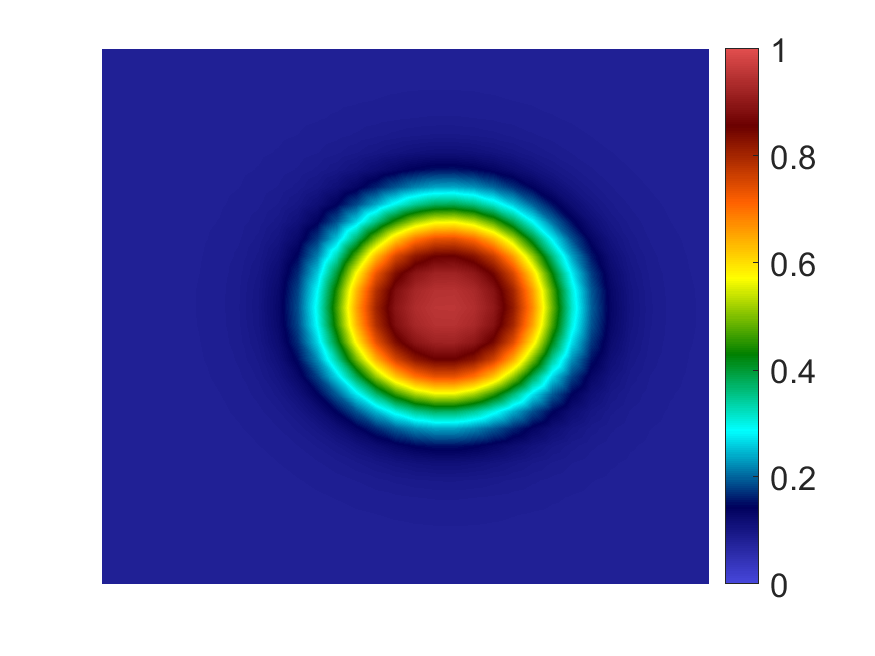} \\[-0.5em]
    \includegraphics[trim={1.5cm 0.0cm 3.0cm 0.5cm},clip,scale=0.32]{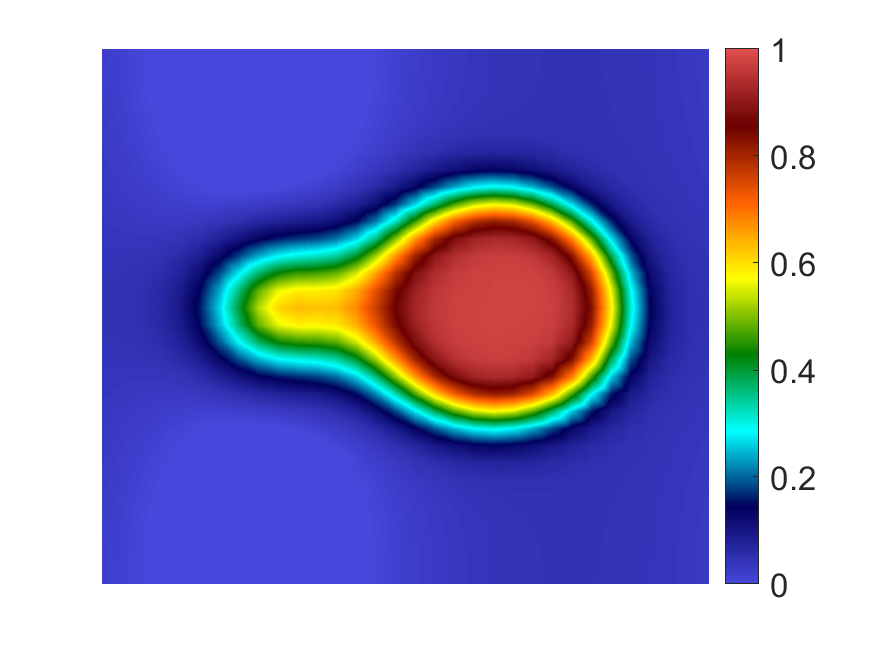} 
    &
    \includegraphics[trim={2.0cm 0.0cm 3.0cm 0.5cm},clip,scale=0.32]{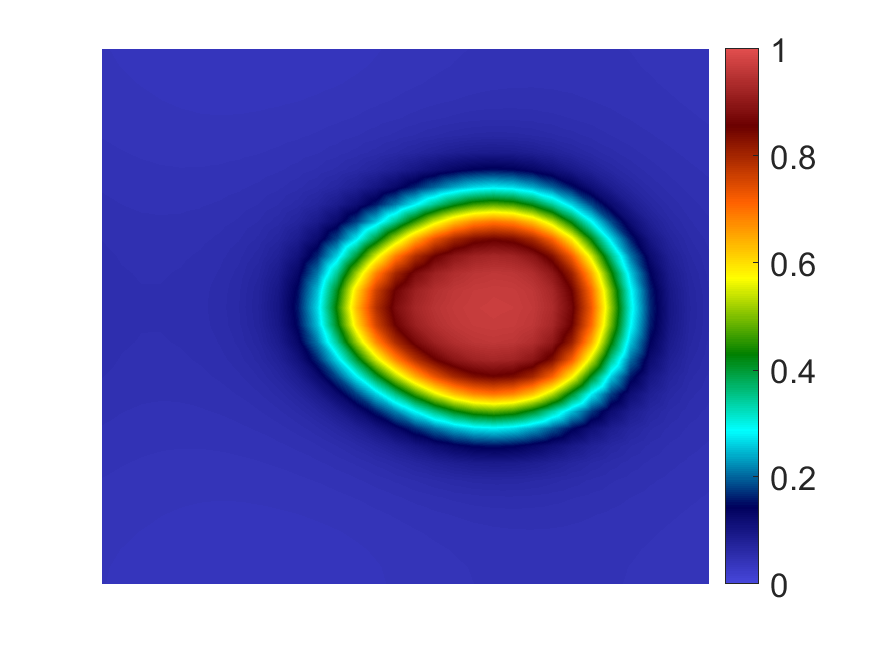}  
    &
    \includegraphics[trim={2.0cm 0.0cm 3.0cm 0.5cm},clip,scale=0.32]{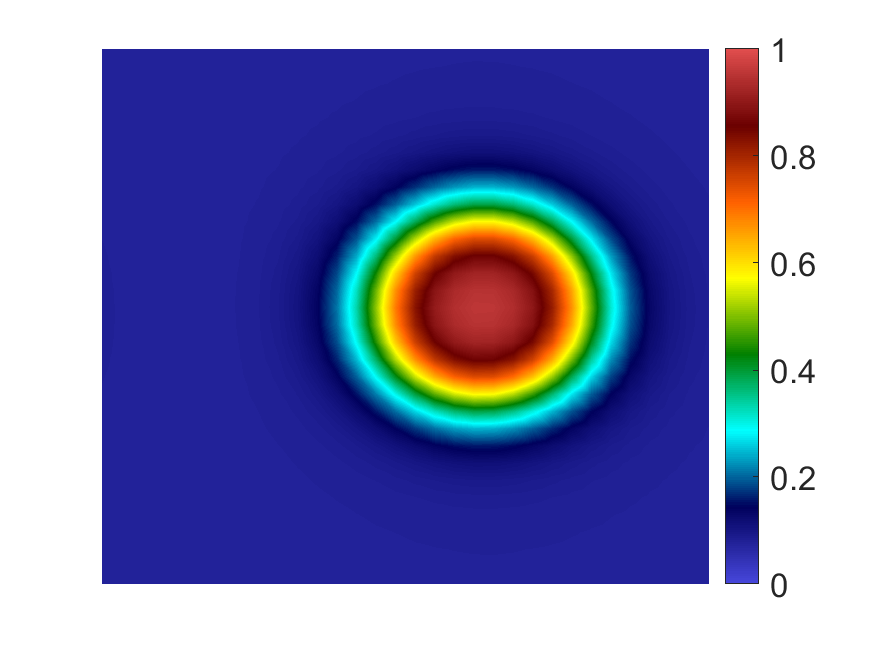}
    &
    \includegraphics[trim={2.0cm 0.0cm 0.0cm 0.5cm},clip,scale=0.32]{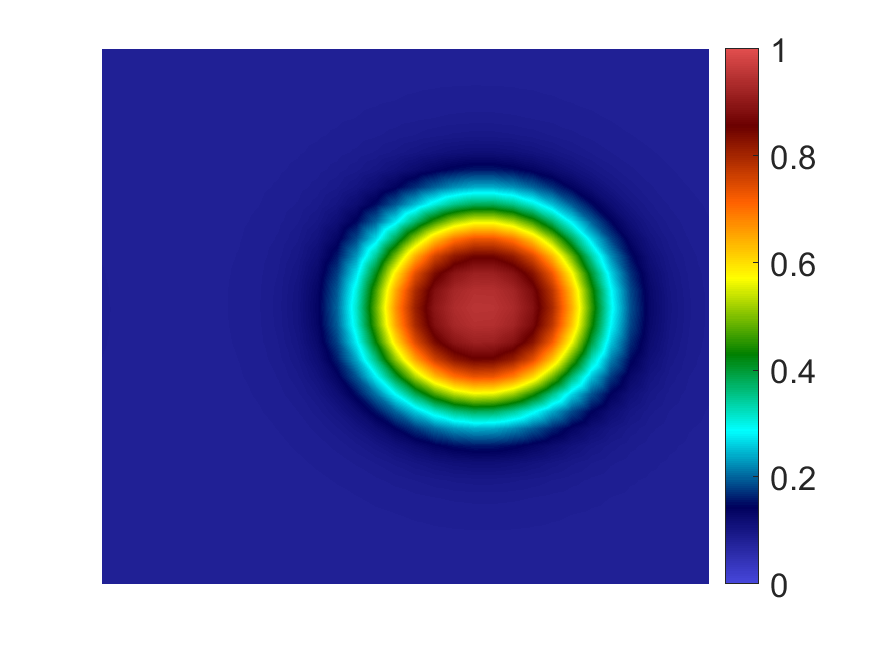} \\
\end{tabular}
    \caption{Snapshots of the conserved phase-field $\rho$ at the times $t\in\{0.5,1.5,7.5,10\}$ with the three different test cases. (Upper row) Initial temperature profile A (Middle row) Initial temperature profile B (Lower row) Initial temperature profile C.\label{fig:evorho}}
\end{figure}

\begin{figure}[htbp!]
\centering
\footnotesize
\begin{tabular}{cccc}
    \includegraphics[trim={1.5cm 0.0cm 3.0cm 0.5cm},clip,scale=0.32]{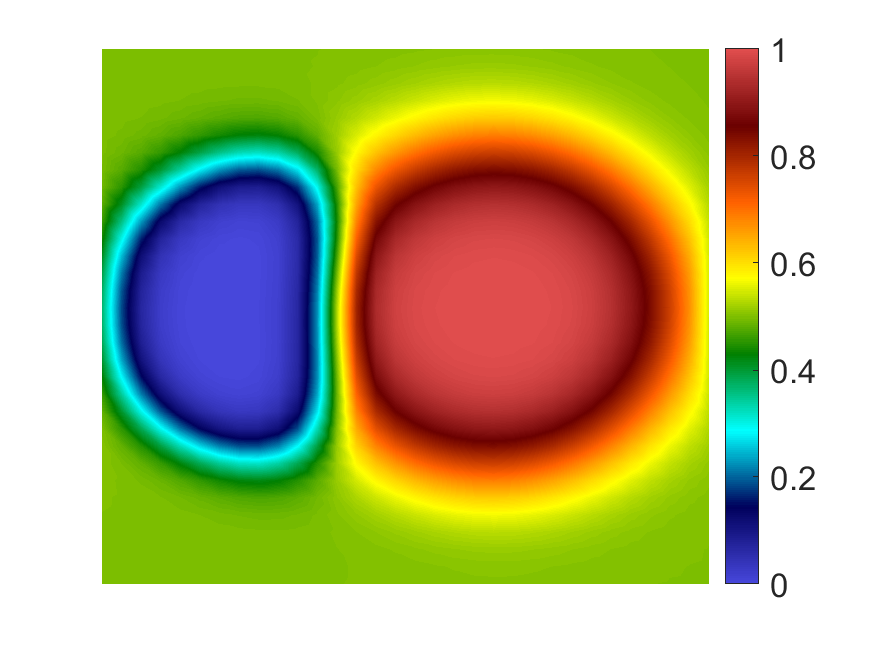} 
    &
    \includegraphics[trim={2.0cm 0.0cm 3.0cm 0.5cm},clip,scale=0.32]{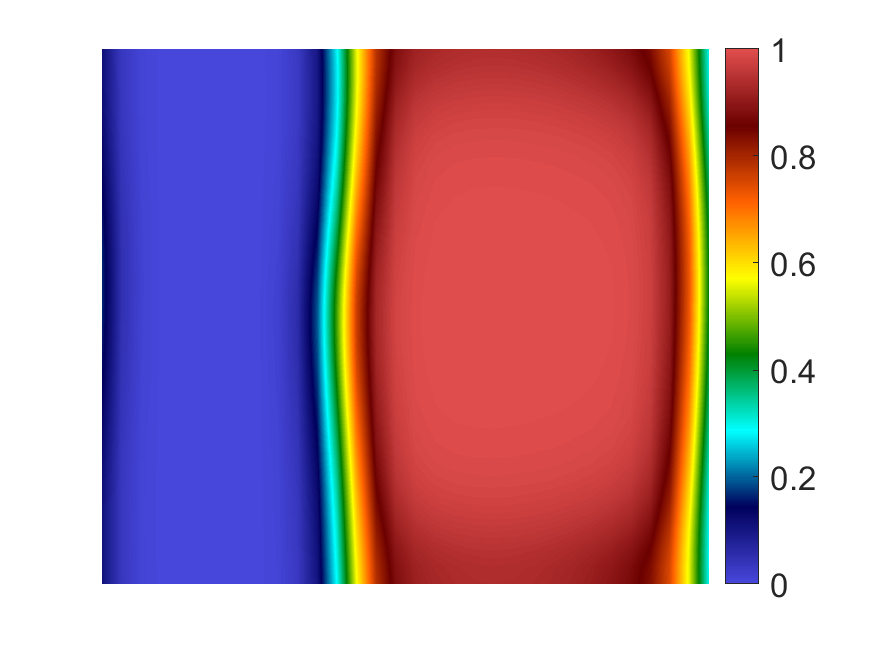}  
    &
    \includegraphics[trim={2.0cm 0.0cm 3.0cm 0.5cm},clip,scale=0.32]{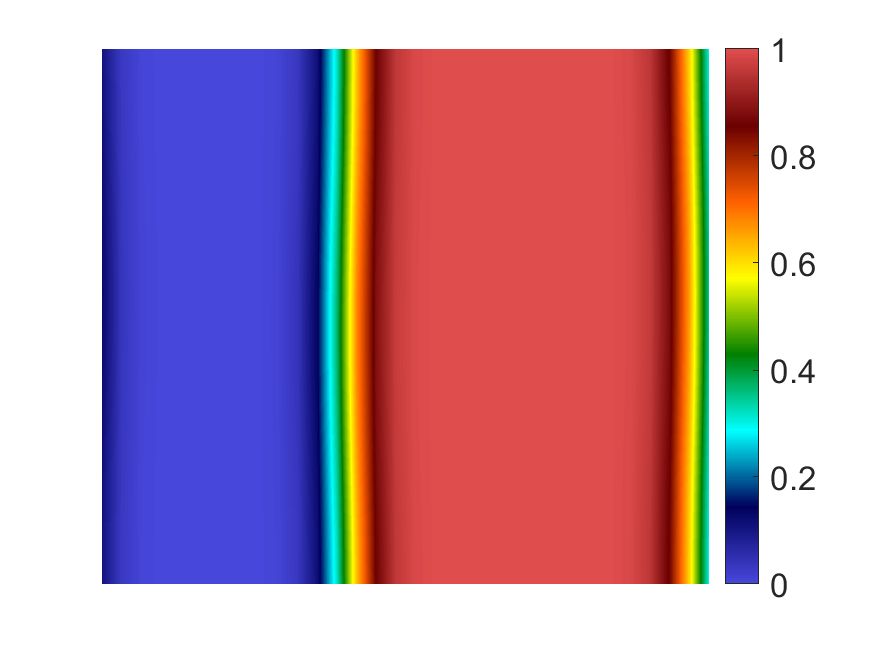}
    &
    \includegraphics[trim={2.0cm 0.0cm 0.0cm 0.5cm},clip,scale=0.32]{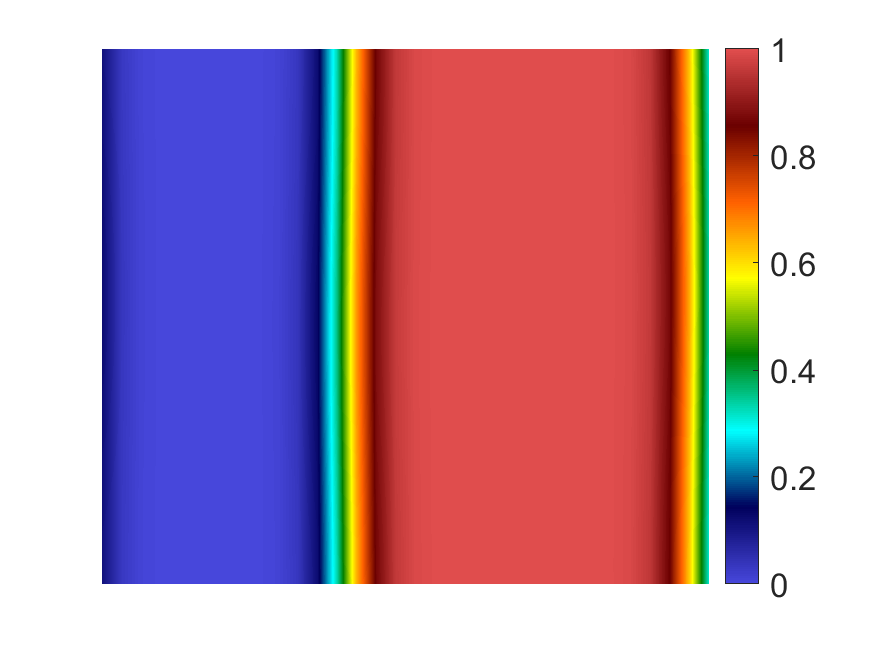} \\[-0.5em]
    \includegraphics[trim={1.5cm 0.0cm 3.0cm 0.5cm},clip,scale=0.32]{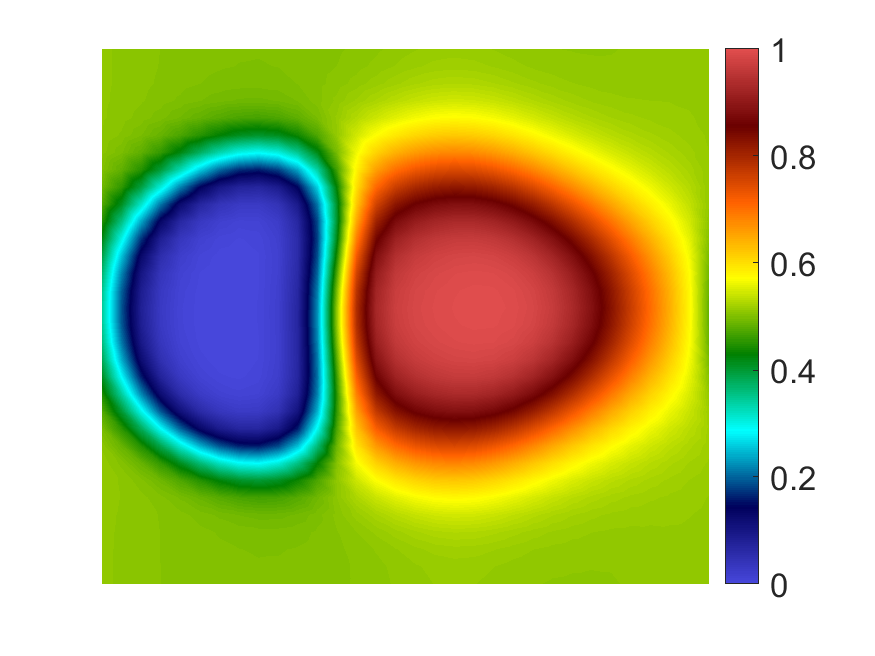} 
    &
    \includegraphics[trim={2.0cm 0.0cm 3.0cm 0.5cm},clip,scale=0.32]{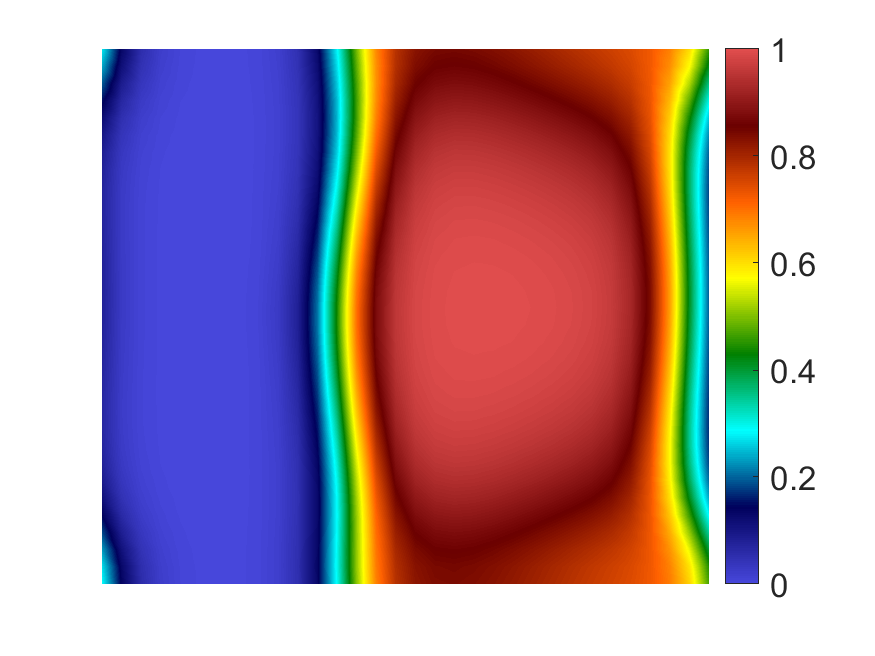}  
    &
    \includegraphics[trim={2.0cm 0.0cm 3.0cm 0.5cm},clip,scale=0.32]{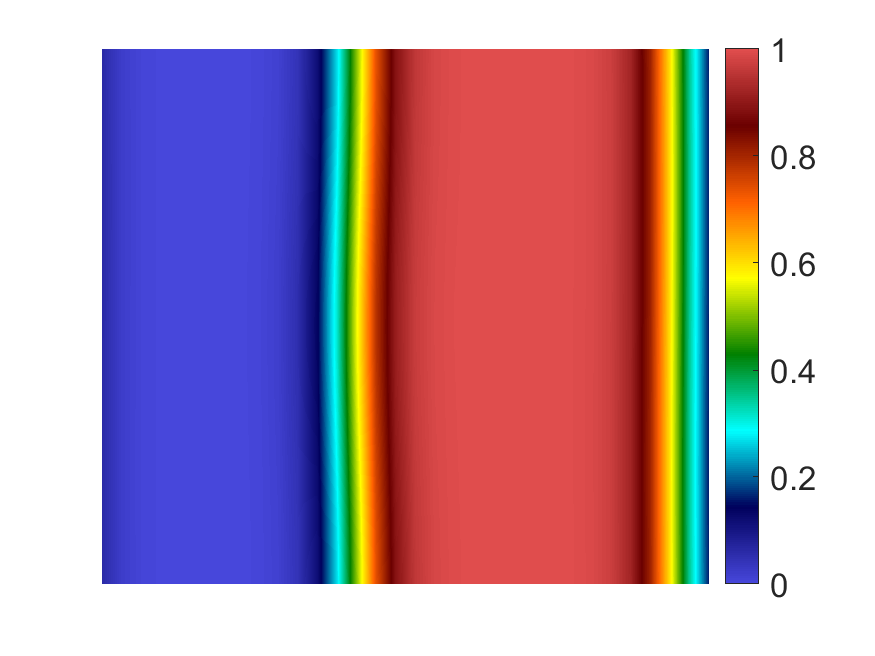}
    &
    \includegraphics[trim={2.0cm 0.0cm 0.0cm 0.5cm},clip,scale=0.32]{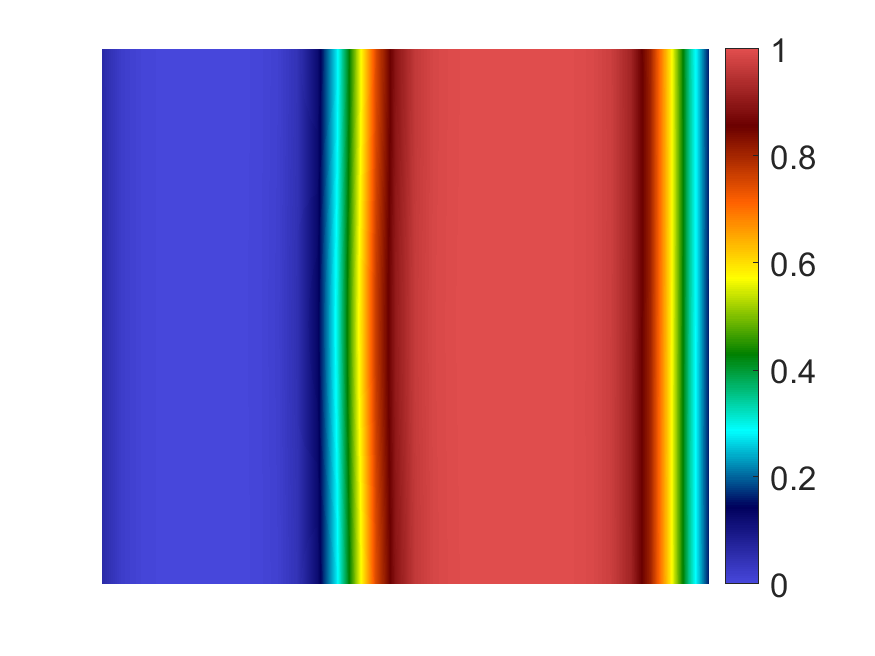} \\[-0.5em]
    \includegraphics[trim={1.5cm 0.0cm 3.0cm 0.5cm},clip,scale=0.32]{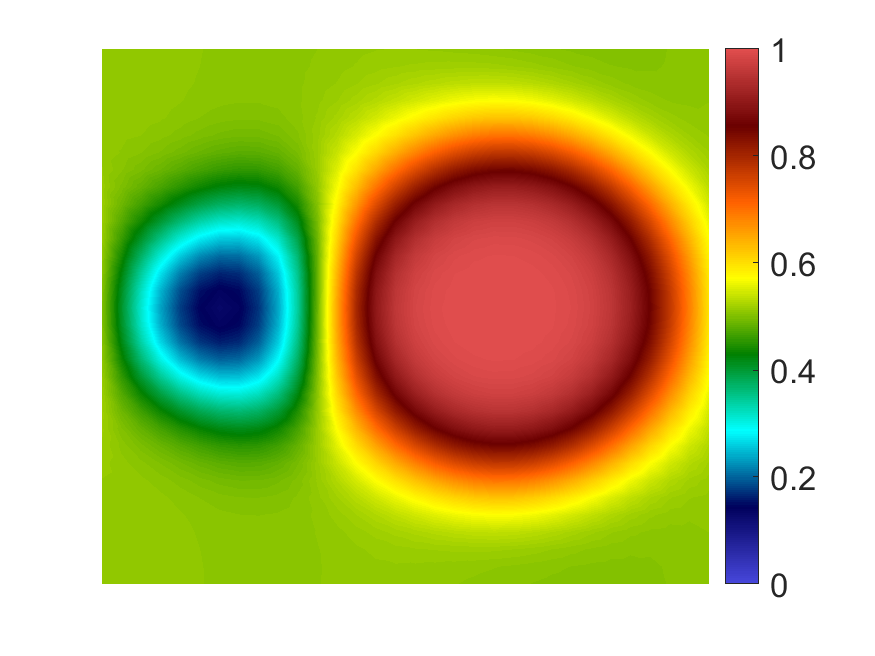} 
    &
    \includegraphics[trim={2.0cm 0.0cm 3.0cm 0.5cm},clip,scale=0.32]{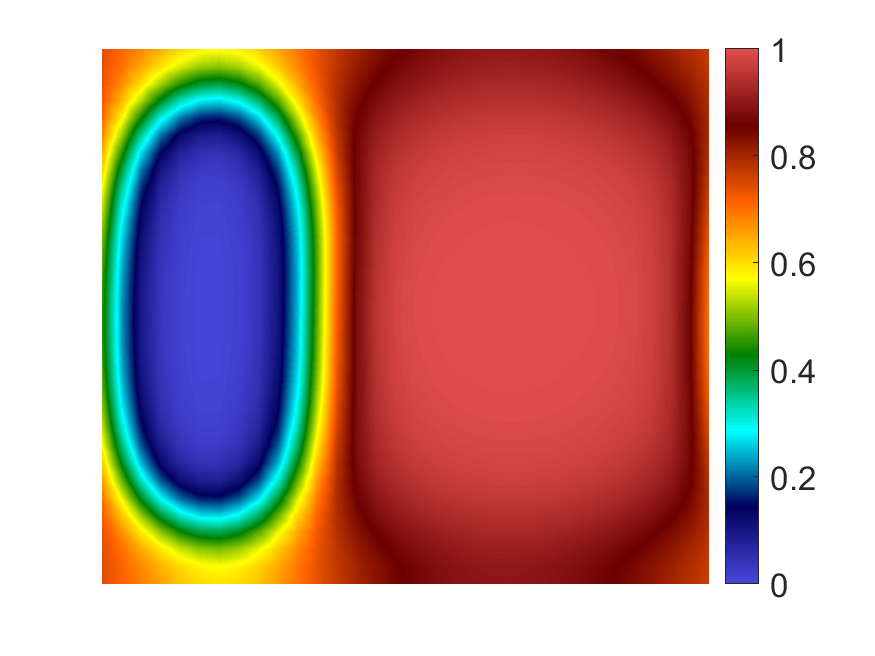}  
    &
    \includegraphics[trim={2.0cm 0.0cm 3.0cm 0.5cm},clip,scale=0.32]{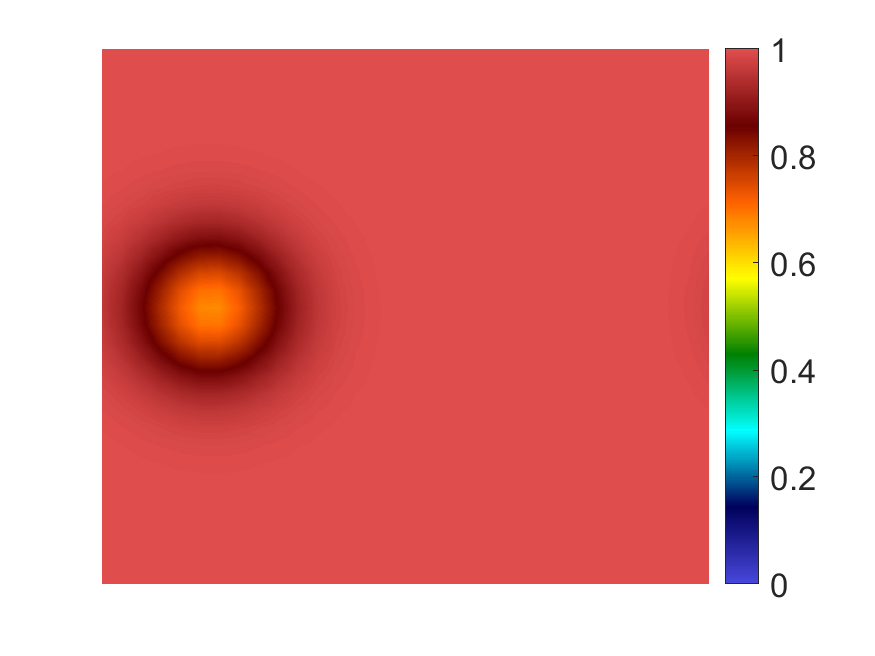}
    &
    \includegraphics[trim={2.0cm 0.0cm 0.0cm 0.5cm},clip,scale=0.32]{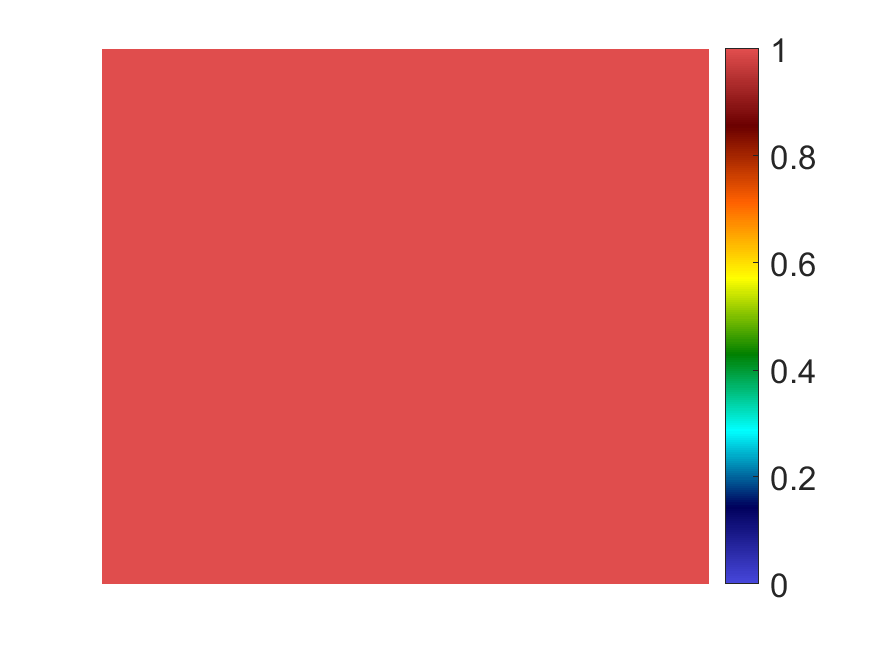} \\
\end{tabular}
    \caption{Snapshots of the non-conserved phase-field $\eta$ at the times $t\in\{0.5,1.5,7.5,10\}$ with the three different test cases. (Upper row) Initial temperature profile A (Middle row) Initial temperature profile B (Lower row) Initial temperature profile C.\label{fig:evoeta}}
\end{figure}

\begin{figure}[htbp!]
\centering
\footnotesize
\begin{tabular}{cccc}
    \includegraphics[trim={1.5cm 0.0cm 3.0cm 0.5cm},clip,scale=0.32]{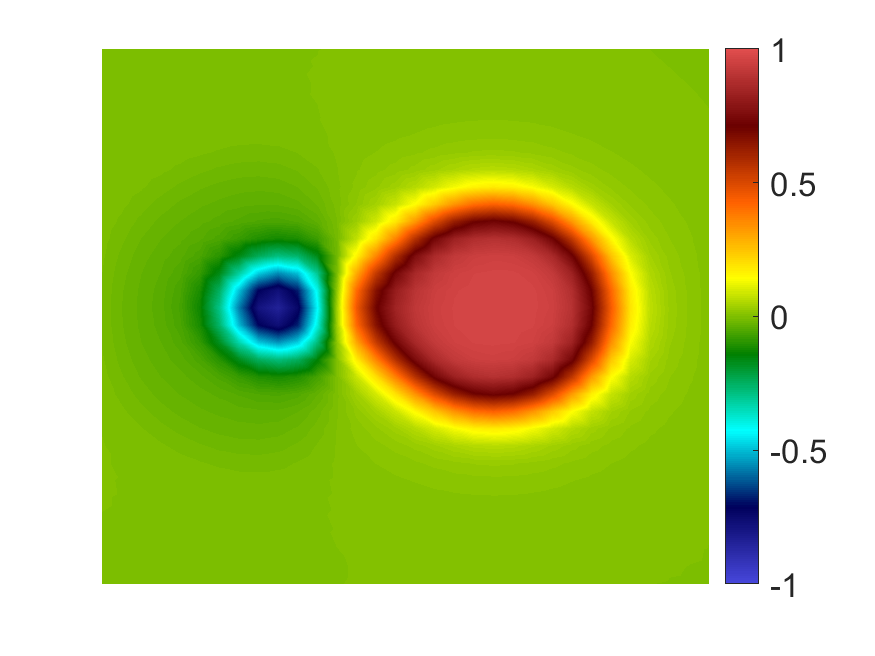} 
    &
    \includegraphics[trim={2.0cm 0.0cm 3.0cm 0.5cm},clip,scale=0.32]{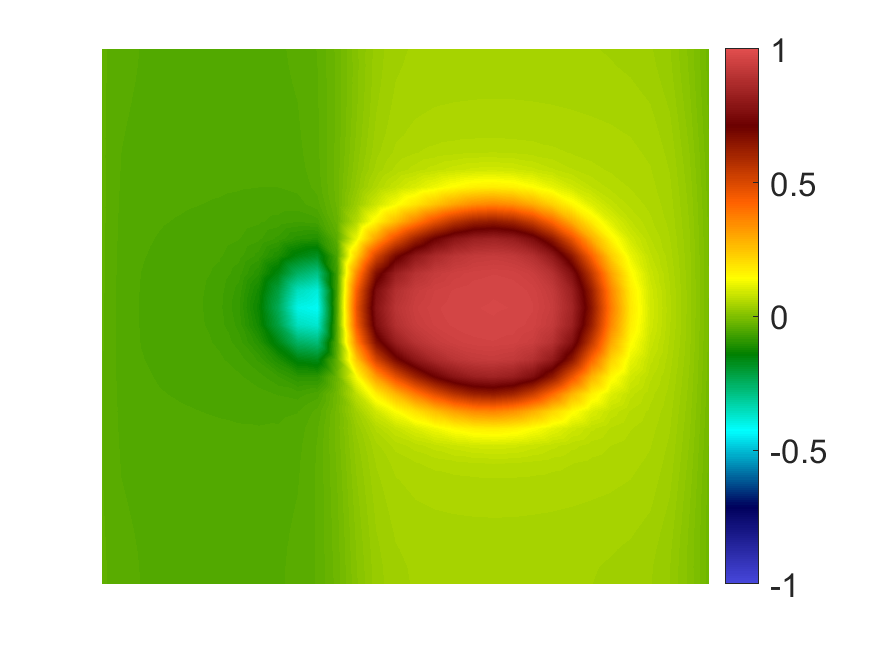}  
    &
    \includegraphics[trim={2.0cm 0.0cm 3.0cm 0.5cm},clip,scale=0.32]{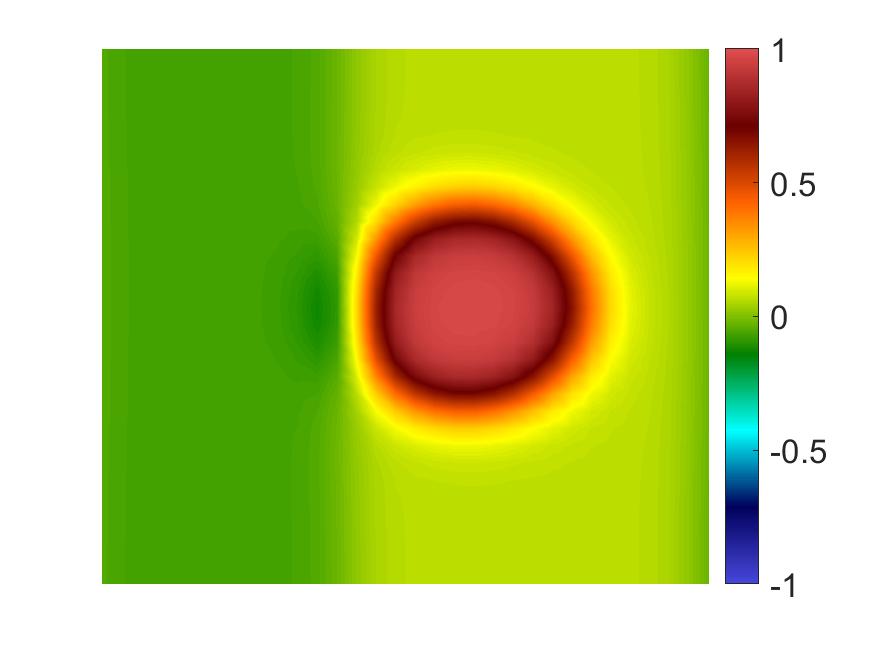}
    &
    \includegraphics[trim={2.0cm 0.0cm 0.0cm 0.5cm},clip,scale=0.32]{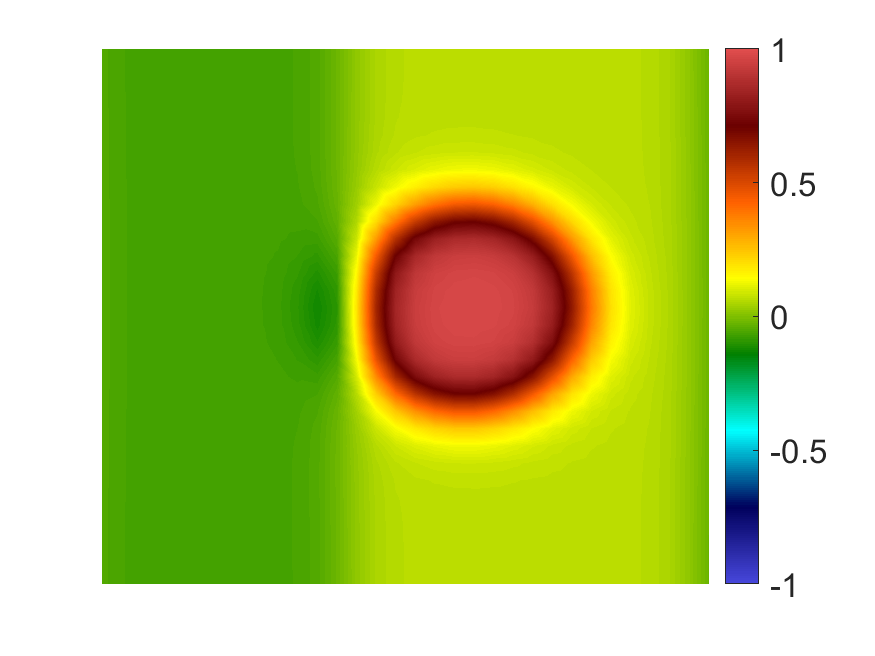} \\[-0.5em]
    \includegraphics[trim={1.5cm 0.0cm 3.0cm 0.5cm},clip,scale=0.32]{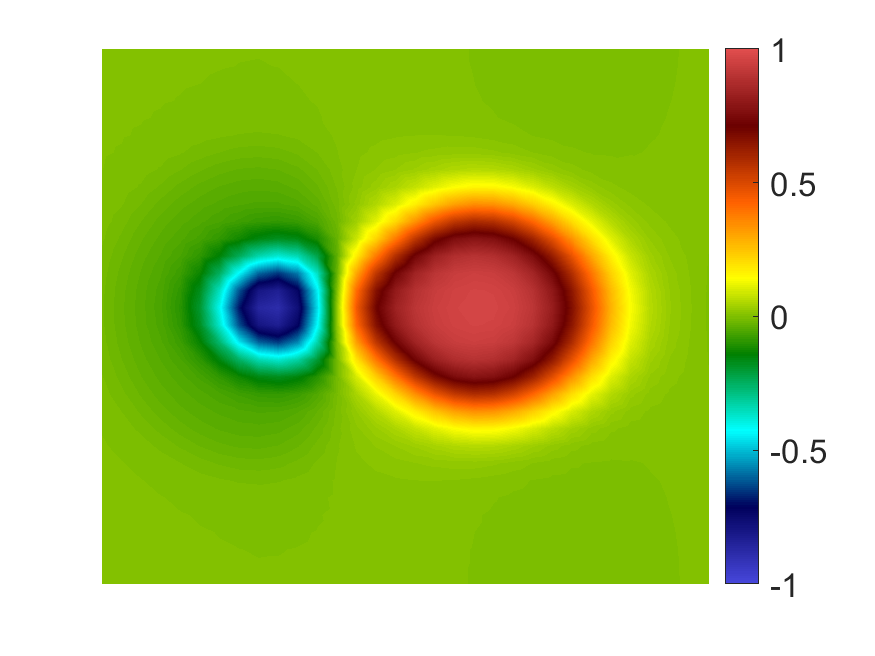} 
    &
    \includegraphics[trim={2.0cm 0.0cm 3.0cm 0.5cm},clip,scale=0.32]{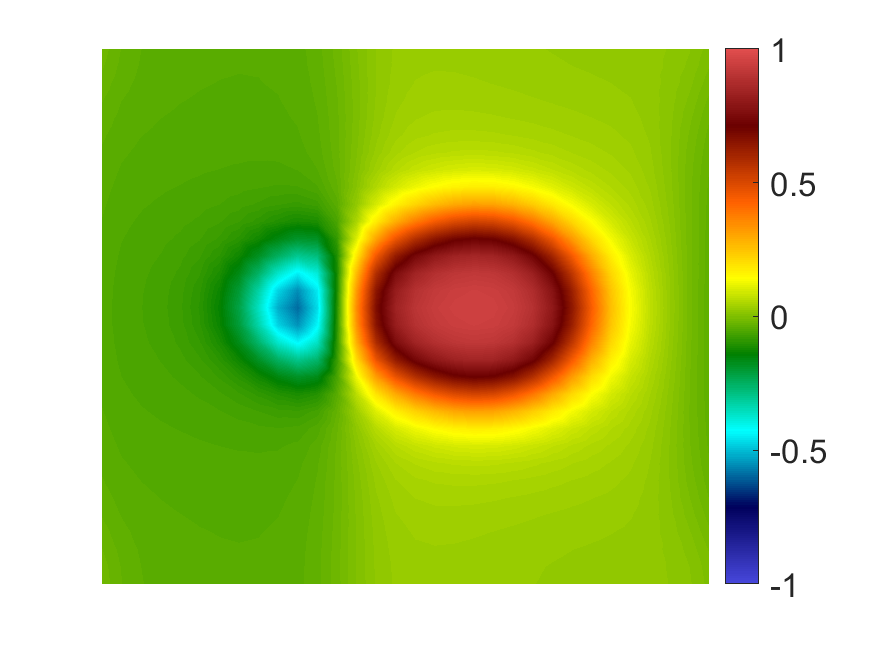}  
    &
    \includegraphics[trim={2.0cm 0.0cm 3.0cm 0.5cm},clip,scale=0.32]{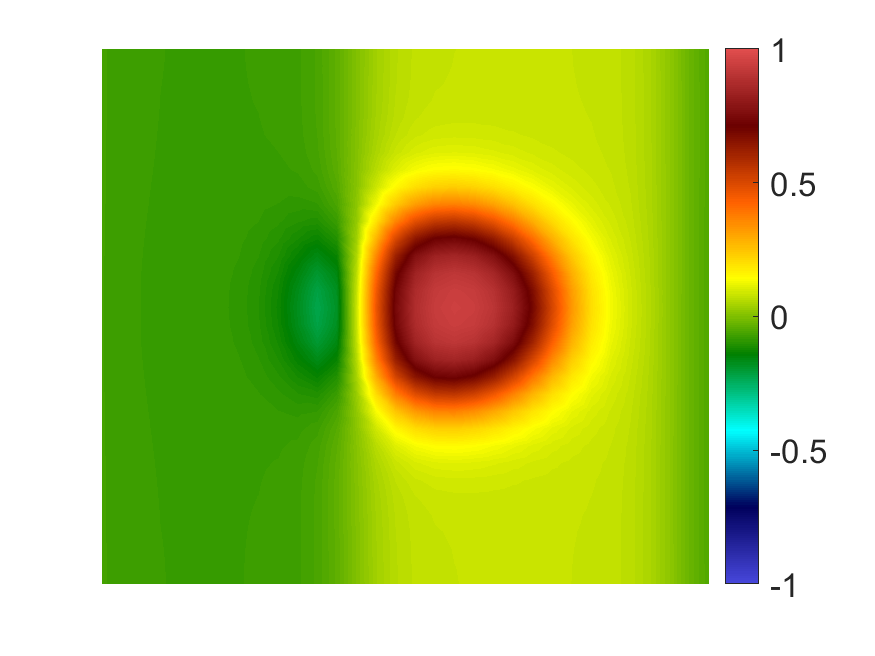}
    &
    \includegraphics[trim={2.0cm 0.0cm 0.0cm 0.5cm},clip,scale=0.32]{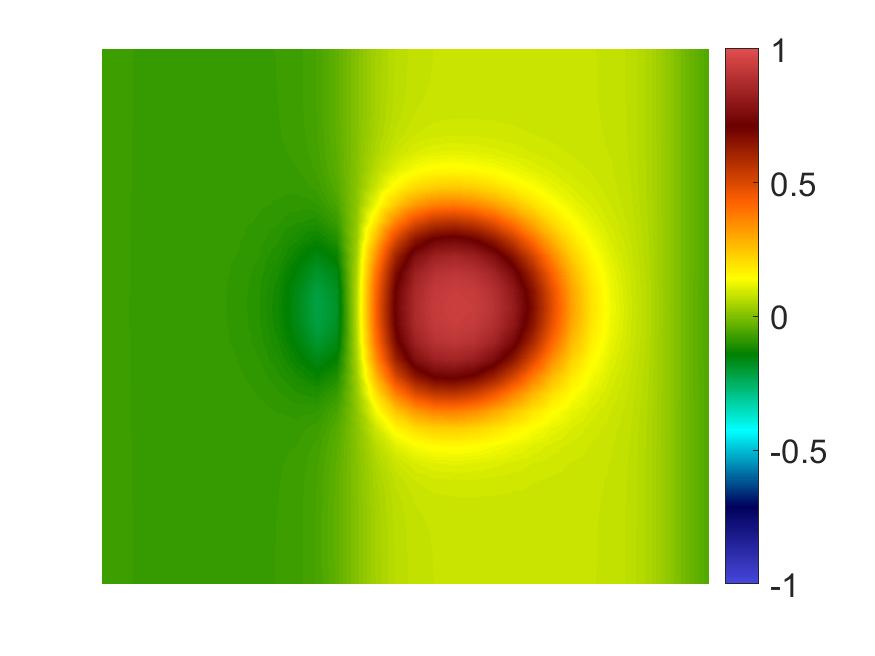} \\[-0.5em]
    \includegraphics[trim={1.5cm 0.0cm 3.0cm 0.5cm},clip,scale=0.32]{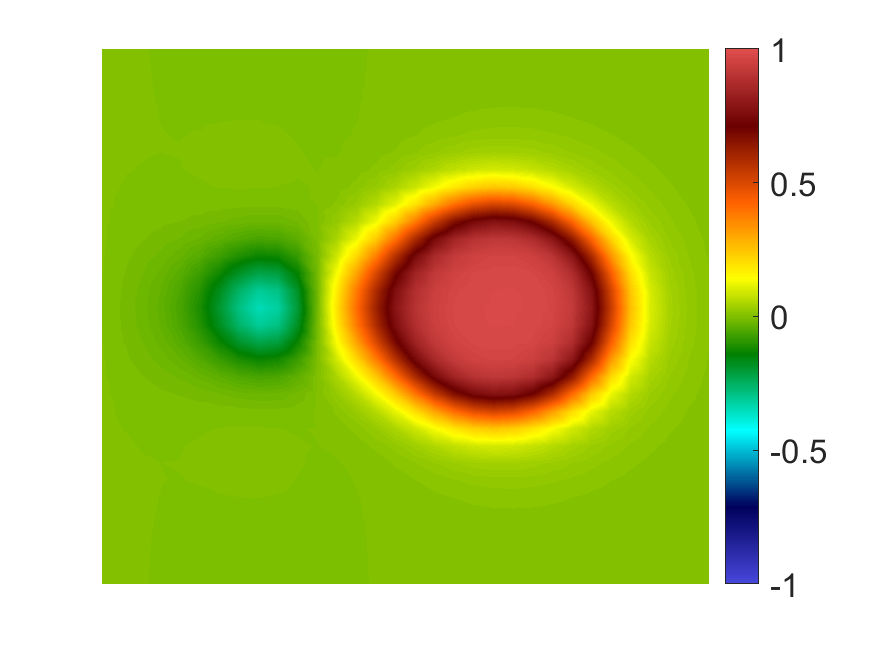} 
    &
    \includegraphics[trim={2.0cm 0.0cm 3.0cm 0.5cm},clip,scale=0.32]{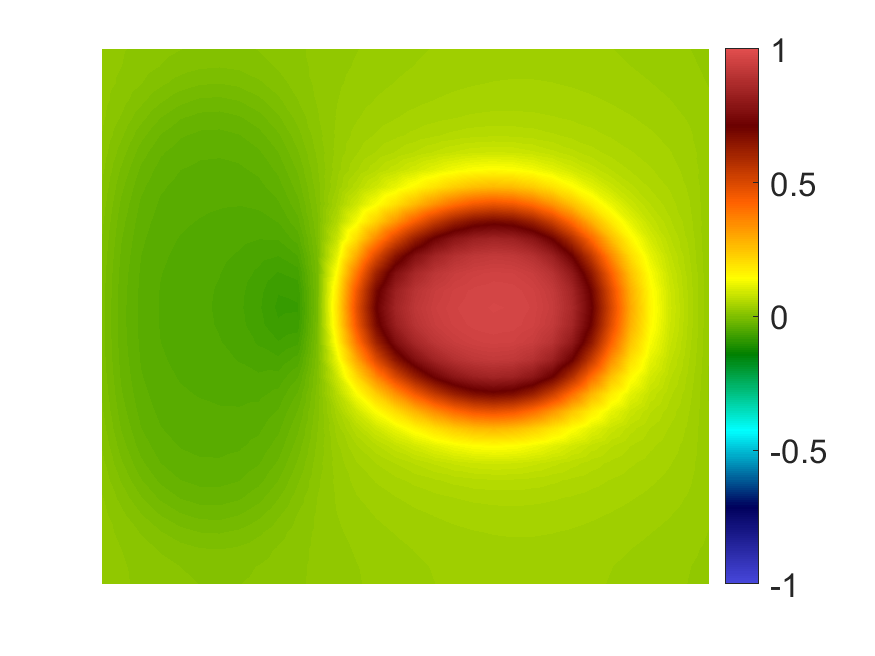}  
    &
    \includegraphics[trim={2.0cm 0.0cm 3.0cm 0.5cm},clip,scale=0.32]{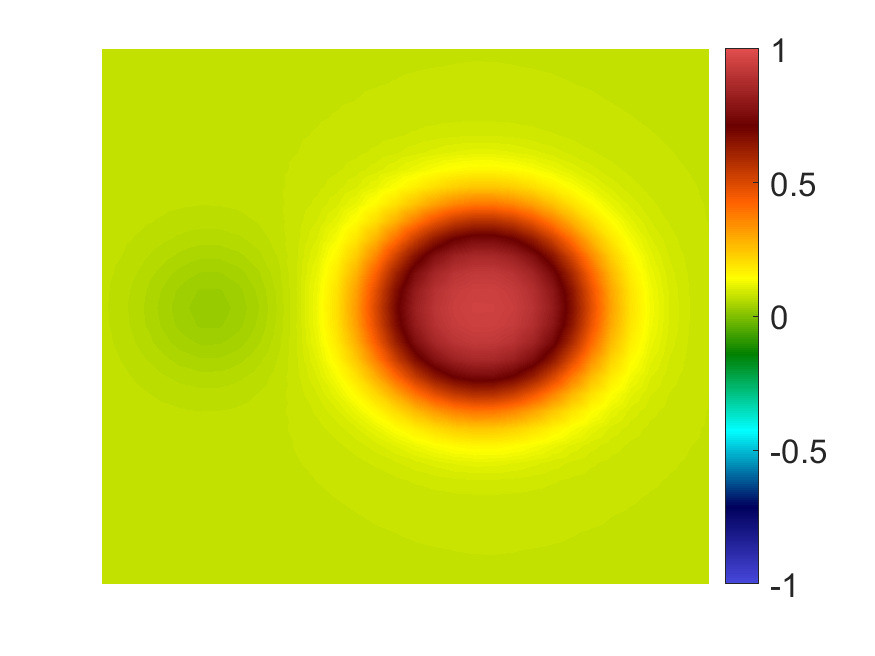}
    &
    \includegraphics[trim={2.0cm 0.0cm 0.0cm 0.5cm},clip,scale=0.32]{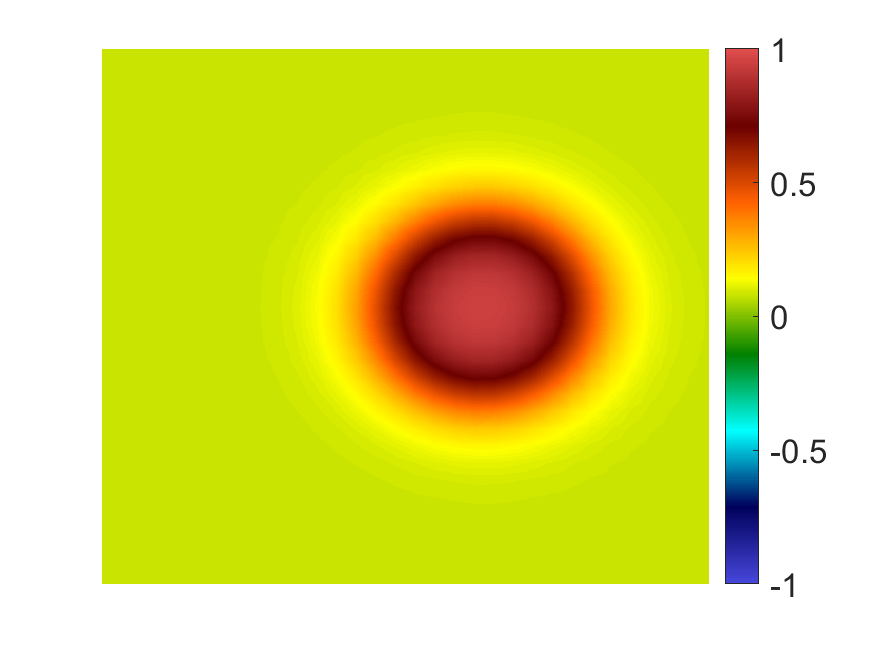} \\
\end{tabular}
    \caption{Snapshots of  $\rho(2\eta-1)$ at the times $t\in\{0.5,1.5,7.5,10\}$ with the three different test cases. (Upper row) Initial temperature profile A (Middle row) Initial temperature profile B (Lower row) Initial temperature profile C.\label{fig:evophase}}
\end{figure}

\begin{figure}[htbp!]
\centering
\footnotesize
\begin{tabular}{ccc}
   
    \includegraphics[trim={2.0cm 0.0cm 0.0cm 0.5cm},clip,scale=0.32]{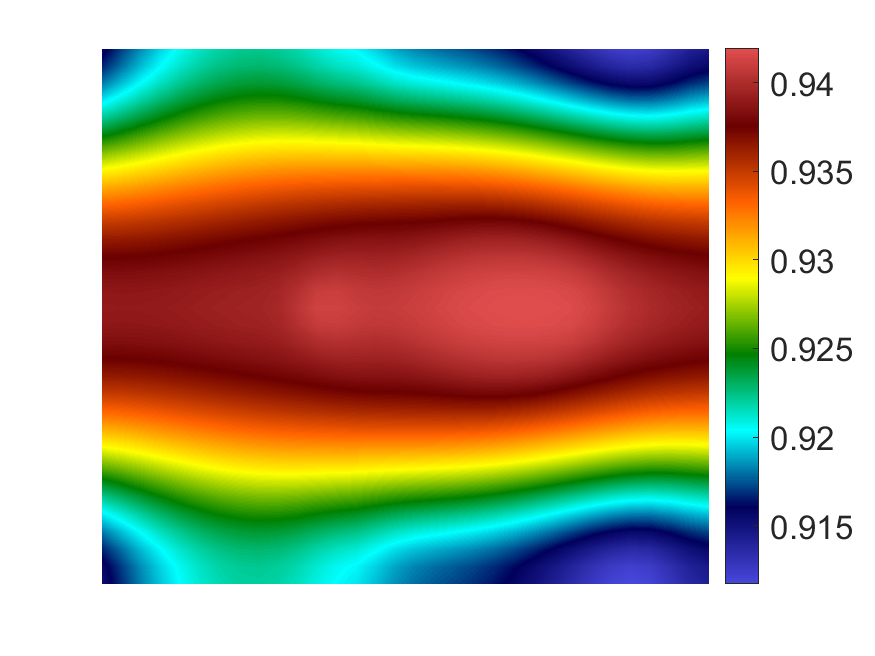}  
    &
    \includegraphics[trim={2.0cm 0.0cm 0.0cm 0.5cm},clip,scale=0.32]{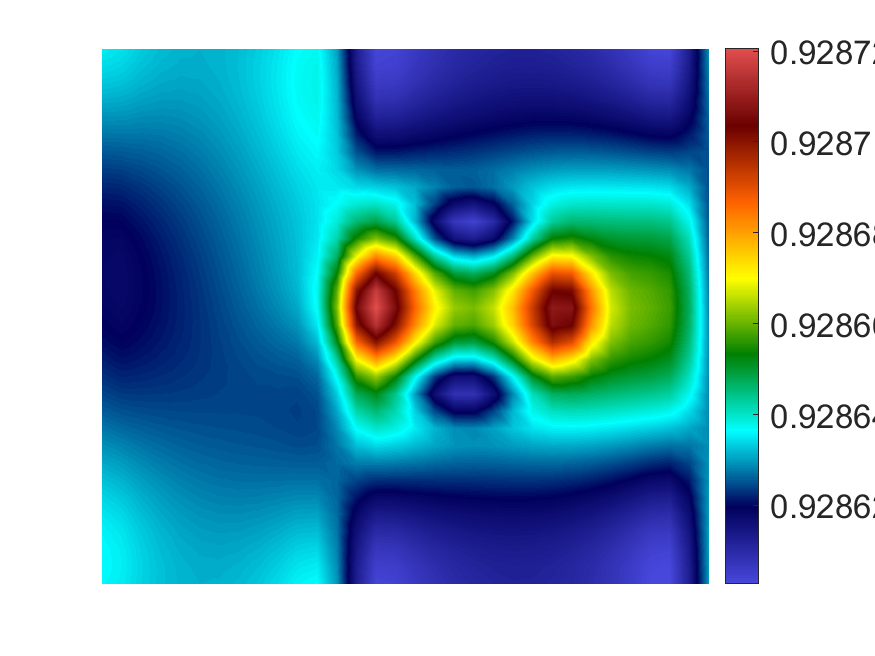}
    &
    \includegraphics[trim={2.0cm 0.0cm 0.0cm 0.5cm},clip,scale=0.32]{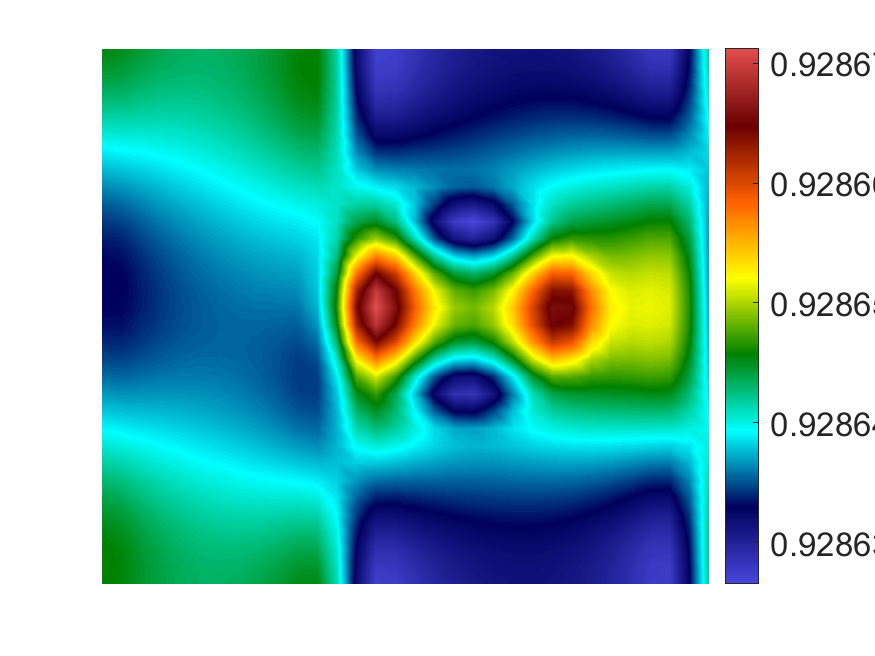} \\[-0.5em]
    \includegraphics[trim={2.0cm 0.0cm 0.0cm 0.5cm},clip,scale=0.32]{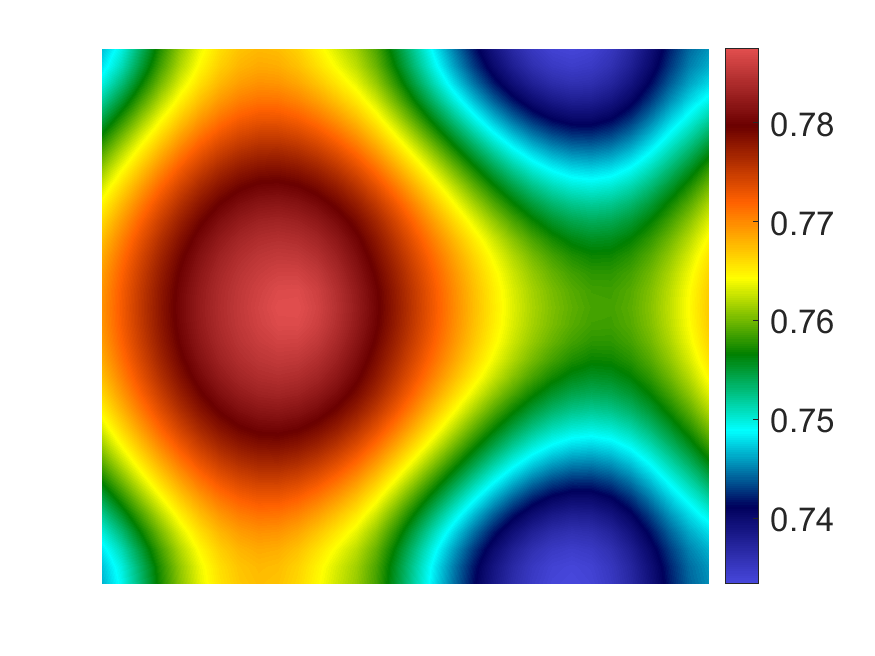}  
    &
    \includegraphics[trim={2.0cm 0.0cm 0.0cm 0.5cm},clip,scale=0.32]{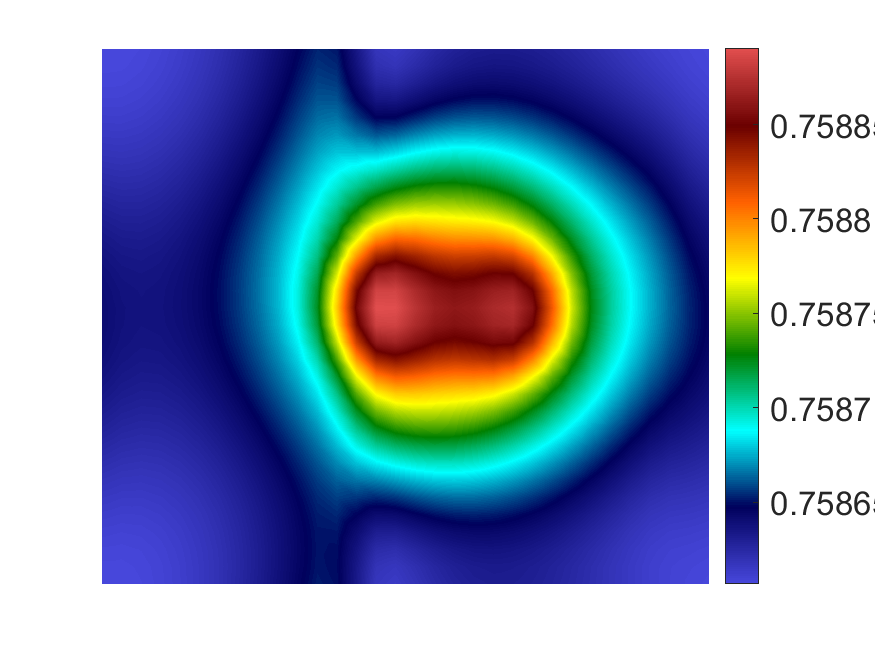}
    &
    \includegraphics[trim={2.0cm 0.0cm 0.0cm 0.5cm},clip,scale=0.32]{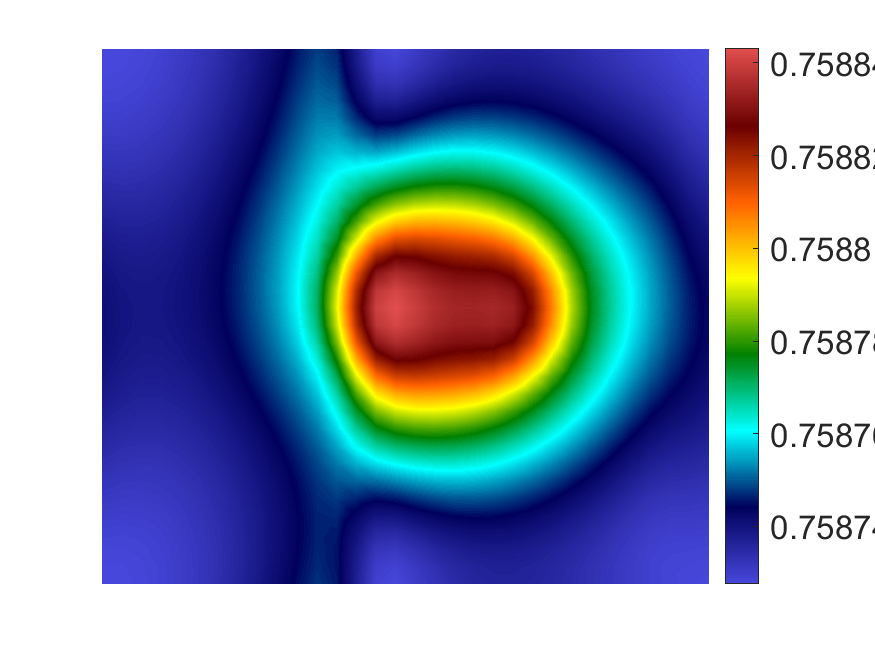} \\[-0.5em]
    \includegraphics[trim={2.0cm 0.0cm 0.0cm 0.5cm},clip,scale=0.32]{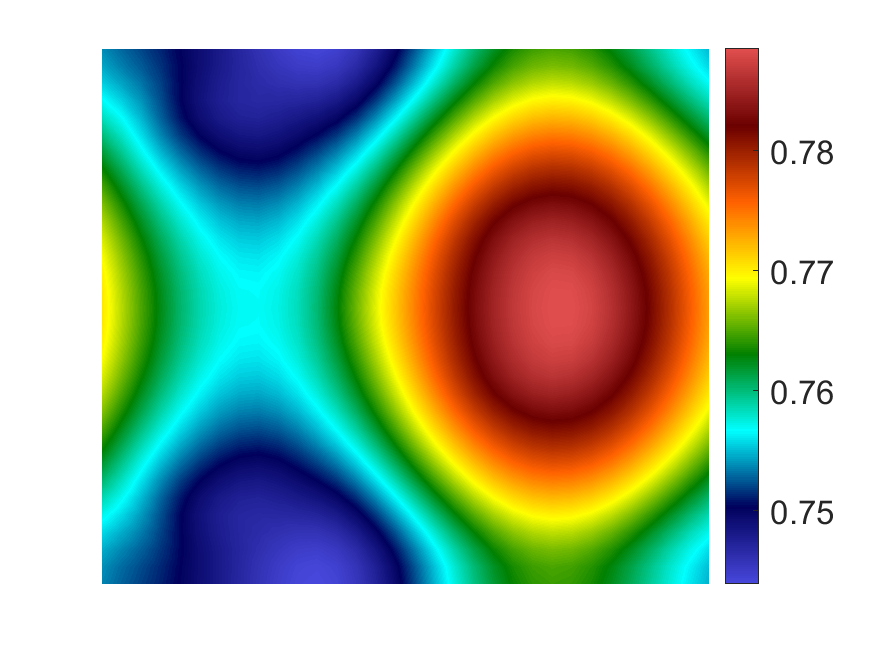}  
    &
    \includegraphics[trim={2.0cm 0.0cm 0.0cm 0.5cm},clip,scale=0.32]{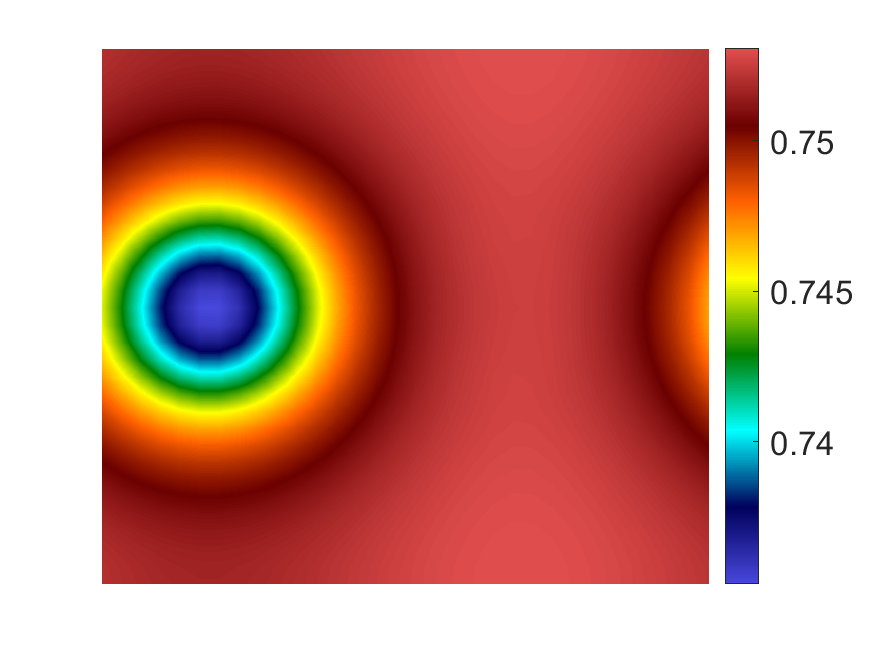}
    &
    \includegraphics[trim={2.0cm 0.0cm 0.0cm 0.5cm},clip,scale=0.32]{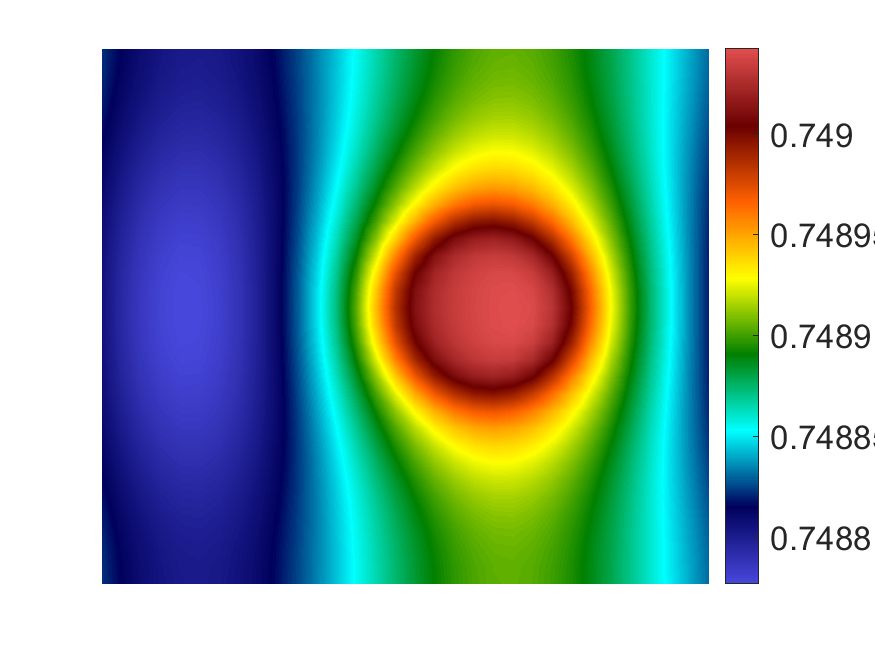} \\
\end{tabular}
    \caption{Snapshots of the inverse temperature $\theta$ at the times $t\in\{1.5,7.5,10\}$ with the three different test cases. (Upper row) Initial temperature profile A (Middle row) Initial temperature profile B (Lower row) Initial temperature profile C.\label{fig:evotheta}}
\end{figure}

\section*{Acknowledgments}
Financial support by the German Science Foundation (DFG, Project number 441153493) within the Priority Program SPP~2256: \textit{Variational Methods for Predicting Complex Phenomena in Engineering Structures and Materials} (project BR~7093/1-2 and Xu~121/13-2) and via TRR~146: \textit{Multiscale Simulation Methods for Soft Matter Systems} (project C3) is gratefully acknowledged. Part of the research was conducted during a research stay of the first author at RICAM/JKU Linz.

\section{Conclusion \& Outlook}
In this work, we have considered a non-isothermal coupled phase-field model with cross-kinetic coupling. We transformed the system to reveal the variational structure and introduced a semi-discrete and a fully-discrete structure-preserving approximation. Furthermore, we have considered the stability of the discrete solutions using the relative entropy framework. In principle, such estimates allow us to perform a rather straightforward error analysis. The expected rates are illustrated by a numerical example. Rigorous error analysis will be part of further research. \changesone{Add outlook}

\clearpage
\appendix

\section{Proof full discrete stability estimate}\label{app:1}
In this section, we will expand the fully discrete relative entropy. We divide the temporal jump of the relative entropy as follows
\begin{align*}
 \mathcal{W}_{\lambda}(\rho,\theta,\eta|\hat\rho,\hat\theta,\hat\eta)\vert_{t^n}^{t^{n+1}} = \mathcal{W}(\rho,\theta,\eta|\hat\rho,\hat\theta,\hat\eta)\vert_{t^n}^{t^{n+1}} + \frac{\lambda}{2}(\norm{\rho_h-\hat\rho_h}_0^2 + \norm{\eta_h-\hat\eta_h}_0^2)\vert_{t^n}^{t^{n+1}}.  
\end{align*}

The structure of this appendix is as follows
\begin{itemize}
    \item The first term from the above decomposition will be expanded in paragraph \ref{subs:expansion}
    \item The resulting relative dissipation is estimated in paragraph \ref{subs:dissip}
    \item In paragraph \ref{subs:quadpart} we deal with the quadratic terms from above.
    \item Finally in \ref{subs:collection} we collect all parts together
\end{itemize}

To increase readability we will neglect the index $h$.

\subsection{Expansion relative entropy}\label{subs:expansion}

Let us compute the evolution of the discrete relative energy, adding suitable zeros and rearranging terms we find
\begin{align*}
&\mathcal{W}(\rho,\theta,\eta|\hat\rho,\hat\theta,\hat\eta)\vert_{t^n}^{t^{n+1}} \\
=& \la \psi(\rho^{n+1},\theta^{n+1},\eta^{n+1}) - \psi(\hat\rho^{n+1},\hat\theta^{n+1},\hat\eta^{n+1}) - \partial_\theta\psi(\rho^{n+1},\theta^{n+1},\eta^{n+1})(\theta^{n+1}-\hat \theta^{n+1}) \\
&- \partial_\rho\psi(\hat\rho^{n+1},\hat\theta^{n+1},\hat\eta^{n+1})(\rho^{n+1}-\hat\rho^{n+1})
- \partial_\eta\psi(\hat\rho^{n+1},\hat\theta^{n+1},\hat\eta^{n+1})(\eta^{n+1}-\hat\eta^{n+1}),1\ra \\
& + \frac{\gamma}{2}\norm{\nabla\rho^{n+1}}_0^2 + \frac{\gamma}{2}\norm{\nabla\eta^{n+1}}_0^2 - \frac{\gamma}{2}\norm{\nabla\hat\rho^{n+1}}_0^2 - \frac{\gamma}{2}\norm{\nabla\hat\eta^{n+1}}_0^2 \\
&-\la \psi(\rho^{n},\theta^{n},\eta^{n}) - \psi(\hat\rho^{n},\hat\theta^{n},\hat\eta^{n}) - \partial_\theta\psi(\rho^{n},\theta^{n},\eta^{n})(\theta^{n}-\hat \theta^{n}) \\
&- \partial_\rho\psi(\hat\rho^{n},\hat\theta^{n},\hat\eta^{n})(\rho^{n}-\hat\rho^{n})
- \partial_\eta\psi(\hat\rho^{n},\hat\theta^{n},\hat\eta^{n})(\eta^{n}-\hat\eta^{n}),1\ra \\
&- \frac{\gamma}{2}\norm{\nabla\rho^{n}}_0^2 - \frac{\gamma}{2}\norm{\nabla\eta^{n}}_0^2 + \frac{\gamma}{2}\norm{\nabla\hat\rho^{n}}_0^2 +\frac{\gamma}{2}\norm{\nabla\hat\eta^{n}}_0^2\\
=& -\la d^{n+1}(e-\hat e), \theta^{n+1}-\hat \theta^{n+1}\ra  +\la d^{n+1}(\rho-\hat\rho),\mu_{\rho}^{n+1}-\hat\mu_{\rho}^{n+1}+r_{2}^{n+1} \ra \\
&+ \la d^{n+1}(\eta-\hat\eta),\mu_{\eta}^{n+1}-\hat\mu_{\eta}^{n+1}+r_{5}^{n+1}\ra \\
&- \la e(\rho^{n},\theta^{n},\eta^{n}), d^{n+1}(\theta^{n+1}-\hat \theta^{n+1}) \ra - \la d^{n+1}\hat e, \theta^{n+1}-\hat \theta^{n+1}\ra - \frac{\gamma}{2}\norm{\nabla d^{n+1}(\rho-\hat\rho)}_0^2  \\
& -\frac{\gamma}{2}\norm{\nabla d^{n+1}(\eta-\hat\eta)}_0^2- \la d^{n+1}(\rho-\hat\rho),\partial_\rho\tilde\psi - \partial_\rho\tilde{\hat\psi}\ra - \la d^{n+1}(\eta-\hat\eta),\partial_\eta\tilde\psi- \partial_\eta\tilde{\hat\psi} \ra \\
& +\la \psi(\rho^{n+1},\theta^{n+1},\eta^{n+1}) - \psi(\hat\rho^{n+1},\hat\theta^{n+1},\hat\eta^{n+1}) \\
&- \partial_\rho\psi(\hat\rho^{n+1},\hat\theta^{n+1},\hat\eta^{n+1})(\rho^{n+1}-\hat\rho^{n+1})
- \partial_\eta\psi(\hat\rho^{n+1},\hat\theta^{n+1},\hat\eta^{n+1})(\eta^{n+1}-\hat\eta^{n+1}),1\ra \\
&-\la \psi(\rho^{n},\theta^{n},\eta^{n}) - \psi(\hat\rho^{n},\hat\theta^{n},\hat\eta^{n}) - \partial_\rho\psi(\hat\rho^{n},\hat\theta^{n},\hat\eta^{n})(\rho^{n}-\hat\rho^{n})
- \partial_\eta\psi(\hat\rho^{n},\hat\theta^{n},\hat\eta^{n})(\eta^{n}-\hat\eta^{n}),1\ra \\
=&-\la d^{n+1}(e-\hat e), \theta^{n+1}-\hat \theta^{n+1}\ra  +\la d^{n+1}(\rho-\hat\rho),\mu_{\rho}^{n+1}-\hat\mu_{\rho}^{n+1}+r_{2}^{n+1} \ra \\
&+ \la d^{n+1}(\eta-\hat\eta),\mu_{\eta}^{n+1}-\hat\mu_{\eta}^{n+1}+r_{5}^{n+1}\ra  + \mathcal{R}.
\end{align*}

\subsection{Remainder $\mathcal{R}$}
In order to rewrite and estimate the remainder term $\mathcal{R}$, we introduce the following notation
\begin{align*}
  \psi(\rho^{n+1},\theta^{n+1},\eta^{n+1})=:\psi^{n+1}, \psi(\rho^{n},\theta^{n},\eta^{n})=:\psi^{n}, \psi(\rho^{n},\theta^{n+1},\eta^{n})=:\psi^{n,n+1,n}  
\end{align*}
In this new notation, the remainder term can be written as
\begin{align*}
 \mathcal{R}  = &\la \psi^{n+1} - \psi^n - \partial_\theta\psi^n d^{n+1}\theta - \partial_\rho\psi_{vex}^{n+1}d^{n+1}\rho - \partial_\rho\psi_{cav}^{n,n+1,n}d^{n+1}\rho - \partial_\eta\psi_{vex}^{n+1}d^{n+1}\eta- \partial_\eta\psi_{cav}^{n,n+1,n}d^{n+1}\eta,1 \ra   \\
 &+\la \hat\psi^{n+1} - \hat\psi^n - \partial_\theta\psi^n d^{n+1}\hat\theta^{n+1} - \partial_\rho\hat\psi_{vex}^{n+1}d\hat\rho - \partial_\rho\hat\psi_{cav}^{n,n+1,n}d^{n+1}\hat\rho - \partial_\eta\hat\psi_{vex}^{n+1}d^{n+1}\hat\eta- \partial_\eta\hat\psi_{cav}^{n,n+1,n}d^{n+1}\hat\eta,1 \ra \\
&+ \la \partial_\theta\psi^n -\partial_\theta\hat\psi^n,d^{n+1}\hat\theta \ra- \la \partial_\theta\hat\psi^{n+1} -\partial_\theta\hat\psi^n,(\theta^{n+1}-\hat\theta^{n+1}) \ra \\
&-\la \partial_\rho\hat\psi^{n+1}, \rho^{n+1}-\hat\rho^{n+1}\ra + \la \partial_\rho\hat\psi^n, \rho^n-\hat\rho^{n}\ra -\la \partial_\eta\hat\psi^{n+1}, \eta^{n+1}-\hat\eta^{n+1}\ra + \la \partial_\eta\hat\psi^n, \eta^n-\hat\eta^{n}\ra\\
&+ \la \partial_\rho\hat\psi_{vex}^{n+1} +\partial_\rho\hat\psi_{cav}^{n,n+1,n} , d^{n+1}\rho\ra + \la  \partial_\eta\hat\psi_{vex}^{n+1} +\partial_\eta\hat\psi_{cav}^{n,n+1,n}, d^{n+1}\eta\ra  \\
&+ \la \partial_\rho\psi_{vex}^{n+1} +\partial_\rho\psi_{cav}^{n,n+1,n} - 2\partial_\rho\hat\psi_{vex}^{n+1} +2\partial_\rho\hat\psi_{cav}^{n,n+1,n}, \hat\rho^{n+1}-\hat\rho^n\ra \\
&+ \la \partial_\eta\psi_{vex}^{n+1} +\partial_\eta\psi_{cav}^{n,n+1,n} - 2\partial_\eta\hat\psi_{vex}^{n+1} +2\partial_\eta\hat\psi_{cav}^{n,n+1,n}, \hat\eta^{n+1}-\hat\eta^n\ra \\
=& (i) + \ldots + (xii)
\end{align*}
where we numbered the different inner products. For simplicity we introduce $\ww = (\rho,\eta)^\top$
By adding and subtracting $\psi^{n,n+1,n},\hat\psi^{n,n+1,n}$ we find that
\begin{align*}
2((i) + (ii))  = 
&\la \partial_{\theta\theta}\psi^{n,\xi_1,n}d^{n+1}\theta,d^{n+1}\theta \ra + \la (H_{\ww}^{cav,\chi_1,{n+1},\chi_2}-H^{vex,\chi_3,{n+1},\chi_4}_{\ww})d^{n+1}\ww,d^{n+1}\ww \ra \\
- &\la \partial_{\theta\theta}\hat\psi^{n,\hat\xi_1,n}d^{n+1}\hat\theta,d^{n+1}\hat\theta \ra - \la (\hat H_{\ww}^{cav,\hat\chi_1,{n+1},\hat\chi_2}-\hat H^{vex,\hat\chi_3,{n+1},\hat\chi_4}_{\ww})d^{n+1}\hat\ww,d^{n+1}\hat\ww \ra  \\
& \qquad \xi_1\in(\theta^n,\theta^{n+1}) , \chi_1,\chi_3\in (\rho^n,\rho^{n+1}) , \chi_2,\chi_4\in (\eta^n,\eta^{n+1}),  \\
& \qquad \hat\xi_1\in(\hat\theta^n,\hat\theta^{n+1}) , \hat\chi_1\hat\chi_3\in (\hat\rho^n,\hat\rho^{n+1}) , \hat\chi_2,\hat\chi_4\in (\hat\eta^n,\hat\eta^{n+1}). 
\end{align*}
This can be further rearranged as follows
\begin{align*}
&2((i) + (ii))  = \\
  =&\la \partial_{\theta\theta}\psi^{n,\xi_1,x}d^{n+1}(\theta-\hat\theta),d^{n+1}(\theta-\hat\theta) \ra \\
&- \la (H^{vex,\chi_3,{n+1},\chi_4}_{\ww} -H_{\ww}^{cav,\chi_1,{n+1},\chi_2}) d^{n+1}(\ww-\hat\ww),d^{n+1}(\ww-\hat\ww)\ra \\
 &- \la d^{n+1}\hat\theta,\partial_{\theta\theta}\hat\psi^{n,\hat\xi_1,n}d^{n+1}\hat\theta + \partial_{\theta\theta}\psi^{n,\xi_1,n}d^{n+1}\hat\theta-2\partial_{\theta\theta}\psi^{n,\xi_1,n}d^{n+1}\theta \ra \\
 &- \la d^{n+1}\hat\ww,\hat H^{cav,n+1,\hat\chi_2}_{\ww}d^{n+1}\hat\ww + H^{cav,\chi_1,n+1,\chi_2}_{\ww}d^{n+1}\hat\ww-2H^{cav,\chi_1,{n+1},\chi_2}_{\ww}d^{n+1}\ww\ra \\
  &+ \la d^{n+1}\hat\ww,\hat H^{vex,\hat\chi_3,{n+1},\hat\chi_4}_{\ww}d^{n+1}\hat\ww + H^{vex,\chi_3,{n+1},\chi_4}_{\ww}d^{n+1}\hat\ww-2H^{vex,\chi_3,{n+1},\chi_4}_{\ww}d^{n+1}\ww\ra.
\end{align*}
%
For the remaining terms we find by adding suitable zeros
\begin{align*}
&(iii) + \ldots + (xii) \\
=& \la \partial_\theta\psi(\rho^n,\theta^n,\eta^n|\hat\rho^n,\hat\theta^n,\hat\eta^n), d^{n+1}\hat\theta\ra  + \la \partial_\theta\tilde\psi(\rho,\theta^{n+1},\eta|\hat\rho,\hat\theta^{n+1},\hat\eta^n), d^{n+1}\hat\rho\ra   \\
&+ \la \partial_\eta\tilde\psi(\rho,\theta^{n+1},\eta|\hat\rho,\hat\theta^{n+1},\hat\eta^n, d^{n+1}\hat\eta\ra  \\
& + \la \partial_{\theta\theta}\hat\psi^n(\theta^n-\hat\theta^n) + \partial_{\theta\rho}\hat\psi^n(\rho^n-\hat\rho^n) +  \partial_{\theta\eta}\hat\psi(\eta^n-\hat\eta^n), d^{n+1} \hat\theta\ra \\
& + \la \partial_{\rho\theta}\hat\psi_{vex}^{n+1}(\theta^{n+1}-\hat\theta^{n+1}) + \partial_{\rho\theta}\hat\psi_{cav}^{n,n+1,n}(\theta^{n+1}-\hat\theta^{n+1}) + \partial_{\rho\rho}\hat\psi_{vex}^{n+1}(\rho^{n+1}-\hat\rho^{n+1})   \\
& \quad + \partial_{\rho\rho}\hat\psi_{cav}^{n,n+1,n}(\rho^{n}-\hat\rho^{n})+ \partial_{\rho\eta}\hat\psi_{vex}^{n+1}(\eta^{n+1}-\hat\eta^{n+1}) + \partial_{\rho\eta}\hat\psi_{cav}^{n,n+1,n}(\eta^{n}-\hat\eta^{n}) , d^{n+1}\hat\rho\ra\\
& + \la \partial_{\eta\theta}\hat\psi^{n+1}(\theta^{n+1}-\hat\theta^{n+1}) + \partial_{\eta\theta}\hat\psi^{n,n+1,n}(\theta^{n+1}-\hat\theta^{n+1}) + \partial_{\eta\rho}\hat\psi_{vex}^{n+1}(\rho^{n+1}-\hat\rho^{n+1})   \\
& \quad + \partial_{\eta\rho}\hat\psi_{cav}^{n,n+1,n}(\rho^{n}-\hat\rho^{n})+ \partial_{\eta\eta}\hat\psi_{vex}^{n+1}(\eta^{n+1}-\hat\eta^{n+1}) + \partial_{\eta\eta}\hat\psi_{cav}^{n,n+1,n}(\eta^{n}-\hat\eta^{n}) , d^{n+1}\hat\eta\ra\\
&- \la \partial_\theta\hat\psi^{n+1} -\partial_\theta\hat\psi^n,\theta^{n+1}-\hat\theta^{n+1} \ra \\
&+ \la \partial_\rho\tilde\psi(\hat\rho,\hat\theta^{n+1},\hat\eta), d^{n+1}(\rho-\hat\rho)\ra + \la \partial_\eta\tilde\psi(\hat\rho,\hat\theta^{n+1},\hat\eta), d^{n+1}(\eta-\hat\eta)\ra \\
&-\la \partial_\rho\hat\psi^{n+1}, \rho^{n+1}-\hat\rho^{n+1}\ra + \la \partial_\rho\hat\psi^n, \rho^n-\hat\rho^{n}\ra -\la \partial_\eta\hat\psi^{n+1}, \eta^{n+1}-\hat\eta^{n+1}\ra + \la \partial_\eta\hat\psi^n, \eta^n-\hat\eta^{n}\ra. \\
\end{align*}
The first three inner products are already part of the results. Hence, we concentrate on the rest. For the next step, we perform the following Taylor expansion and algebraic manipulation
\begin{align*}
 &- \la \partial_\theta\hat\psi^{n+1} -\partial_\theta\hat\psi^n,\theta^{n+1}-\hat\theta^{n+1}  \ra 
= - \la \partial_{\theta\theta}\hat\psi^{\hat\omega_1}d^{n+1}\hat\theta + \partial_{\theta\rho}\hat\psi^{\hat\omega_1}d^{n+1}\hat\rho + \partial_{\theta\eta}\hat\psi^{\hat\omega_1}d^{n+1}\hat\eta, \theta^{n+1}-\hat\theta^{n+1} \ra,\\
&\qquad\qquad \hat\omega_1 \in (\hat\theta^n,\hat\theta^{n+1}) \\
&\la \partial_\rho\tilde\psi(\hat\rho,\hat\theta^{n+1},\hat\eta), d^{n+1}(\rho-\hat\rho)\ra -\la \partial_\rho\hat\psi^{n+1}, \rho^{n+1}-\hat\rho^{n+1}\ra + \la \partial_\rho\hat\psi^n, \rho^n-\hat\rho^{n}\ra \\
= & \la \partial_\rho\tilde\psi(\hat\rho,\hat\theta^{n+1},\hat\eta)-\partial_\rho\hat\psi^{n+1}, d^{n+1}(\rho-\hat\rho)\ra + \la \partial_\rho\hat\psi^n-\partial_\rho\hat\psi^{n+1}, \rho^n-\hat\rho^n\ra \\
& \\
&\la \partial_\eta\tilde\psi(\hat\rho,\hat\theta^{n+1},\hat\eta), d^{n+1}(\eta-\hat\eta)\ra -\la \partial_\eta\hat\psi^{n+1}, \eta^{n+1}-\hat\eta^{n+1}\ra + \la \partial_\eta\hat\psi^, \eta^n-\hat\eta^{n}\ra \\
= & \la \partial_\eta\tilde\psi(\hat\rho,\theta^{n+1},\hat\eta)-\partial_\eta\hat\psi^{n+1}, d^{n+1}(\eta-\hat\eta)\ra + \la \partial_\eta\psi^n-\partial_\eta\hat\psi^{n+1}, \eta^n-\hat\eta^n\ra
\end{align*}
With these insights, some rearrangement and expansion of the last two equalities we
\begin{align*}
&(iii) + \ldots + (xii) \\
&= \la \partial_\theta\psi(\rho^n,\theta^n,\eta^n|\hat\rho^n,\hat\theta^n,\hat\eta^n), d^{n+1}\hat\theta\ra  + \la \partial_\theta\tilde\psi(\rho,\theta^{n+1},\eta|\hat\rho,\hat\theta^{n+1},\hat\eta^n), d^{n+1}\hat\rho\ra   \\
&+ \la \partial_\theta\tilde\psi(\rho,\theta^{n+1},\eta|\hat\rho,\hat\theta^{n+1},\hat\eta^n, d^{n+1}\hat\eta\ra  \\
& -\la \partial_{\theta\theta}\hat\psi^nd^{n+1}(\theta-\hat\theta) + (\partial_{\theta\theta}\hat\psi^{\hat\omega_1}-\partial_{\theta\theta}\hat\psi^n)(\theta^{n+1}-\hat\theta^{n+1}), d^{n+1} \hat\theta\ra \\
& +\la \partial_{\theta\rho}\hat\psi^n(\rho^n-\hat\rho^n) +  \partial_{\theta\eta}\hat\psi^n(\eta^n-\hat\eta^n), d^{n+1} \hat\theta\ra -\la \partial_{\theta\rho}\hat\psi^{\hat\omega_1}d^{n+1}\hat\rho^{n+1} +  \partial_{\theta\eta}\hat\psi^{\hat\omega_1}d^{n+1}\hat\eta, \theta^{n+1}-\hat\theta^{n+1}\ra \\
& + \la \partial_{\rho\theta}\hat\psi_{vex}^{n+1}(\theta^{n+1}-\hat\theta^{n+1}) + \partial_{\rho\theta}\hat\psi_{cav}^{n,n+1,n}(\theta^{n+1}-\hat\theta^{n+1}) + \partial_{\rho\rho}\hat\psi_{vex}^{n+1}(\rho^{n+1}-\hat\rho^{n+1})   \\
&  \quad+ \partial_{\rho\rho}\hat\psi_{cav}^n(\rho^{n}-\hat\rho^{n})+ \partial_{\rho\eta}\hat\psi_{vex}^{n+1}(\eta^{n+1}-\hat\eta^{n+1}) + \partial_{\rho\eta}\hat\psi_{cav}^n(\eta^{n}-\hat\eta^{n}) , d^{n+1}\hat\rho\ra\\
& + \la \partial_{\eta\theta}\hat\psi_{vex}^{n+1}(\theta^{n+1}-\hat\theta^{n+1})+ \partial_{\eta\theta}\hat\psi_{cav}^{n,n+1,n}(\theta^{n+1}-\hat\theta^{n+1}) + \partial_{\eta\rho}\hat\psi_{vex}^{n+1}(\rho^{n+1}-\hat\rho^{n+1})   \\
& \quad+ \partial_{\eta\rho}\hat\psi_{cav}^n(\rho^{n}-\hat\rho^{n})+ \partial_{\eta\eta}\hat\psi_{vex}^{n+1}(\eta^{n+1}-\hat\eta^{n+1}) + \partial_{\eta\eta}\hat\psi_{cav}^n(\eta^{n}-\hat\eta^{n}) , d^{n+1}\hat\eta\ra\\
&- \la \partial_{\rho\rho}\hat\psi_{cav}^{\hat\omega_2}d^{n+1}\hat\rho + \partial_{\rho\eta}\hat\psi_{cav}^{\hat\omega_2}d^{n+1}\hat\eta, d^{n+1}(\rho-\hat\rho)\ra - \la \partial_{\eta\rho}\hat\psi_{cav}^{\hat\omega_3}d^{n+1}\hat\rho + \partial_{\eta\eta}\hat\psi_{cav}^{\hat\omega_3}d^{n+1}\hat\eta, d^{n+1}(\eta-\hat\eta)\ra \\
& - \la \partial_{\rho\theta}\hat\psi^{\hat\omega_4}d^{n+1}\hat\theta + \partial_{\rho\rho}\hat\psi^{\hat\omega_4}d^{n+1}\hat\rho + \partial_{\rho\eta}\hat\psi^{\hat\omega_4}d^{n+1}\hat\eta ,\rho^n-\hat\rho^n\ra \\
& - \la \partial_{\eta\theta}\psi^{\hat\omega_5}d^{n+1}\hat\theta + \partial_{\eta\rho}\hat\psi^{\hat\omega_5}d^{n+1}\hat\rho + \partial_{\eta\eta}\hat\psi^{\hat\omega_5}d^{n+1}\hat\eta ,\eta^n-\hat\eta^n\ra, \\
& \omega_2,\omega_3\in \Big[(\hat\rho^n,\hat\eta^n)^\top,(\hat\rho^{n+1},\hat\eta^{n+1})^\top\Big], \qquad \omega_4, \omega_5 \in \Big[(\hat\rho^n,\hat\theta^n,\hat\eta^n)^\top,(\hat\rho^{n+1},\hat\theta^{n+1},\hat\eta^{n+1})^\top\Big].
\end{align*}
We are now in the position to regroup the terms into the final form. Algebraic manipulation yields
\begin{align*}
 &(iii)-(xii) =\\
 =& \la \partial_\theta\psi(\rho^n,\theta^n,\eta^n|\hat\rho^n,\hat\theta^n,\hat\eta^n), d^{n+1}\hat\theta\ra  + \la \partial_\theta\tilde\psi(\rho,\theta^{n+1},\eta|\hat\rho,\hat\theta^{n+1},\hat\eta^n), d^{n+1}\hat\rho\ra   \\
&+ \la \partial_\theta\tilde\psi(\rho,\theta^{n+1},\eta|\hat\rho,\hat\theta^{n+1},\hat\eta^n, d^{n+1}\hat\eta\ra  \\
 &-\la \partial_{\theta\theta}\hat\psi^nd^{n+1}(\theta-\hat\theta) + (\partial_{\theta\theta}\hat\psi^{\hat\omega_1}-\partial_{\theta\theta}\hat\psi^n)(\theta^{n+1}-\hat\theta^{n+1}), d^{n+1} \hat\theta\ra \\   
 & +\la (\partial_{\rho\theta}\hat\psi^n-\partial_{\rho\theta}\hat\psi^{\hat\omega_4})(\rho^n-\hat\rho^n) + (\partial_{\eta\theta}\hat\psi^n-\partial_{\eta\theta}\hat\psi^{\hat\omega_5})(\eta^n-\hat\eta^n), d^{n+1}\hat\theta\ra \\
 & +   \la (\partial_{\rho\theta}\hat\psi_{vex}^{n+1} + \partial_{\rho\theta}\hat\psi_{cav}^{n,n+1,n}-\partial_{\rho\theta}\hat\psi^{\hat\omega_1})d^{n+1}\hat\rho, \theta^{n+1}-\hat\theta^{n+1}\ra \\
  & +  \la (\partial_{\eta\theta}\hat\psi_{vex}^{n+1} + \partial_{\eta\theta}\hat\psi_{cav}^{n,n+1,n}-\partial_{\eta\theta}\hat\psi^{\hat\omega_1})d^{n+1}\hat\eta, \theta^{n+1}-\hat\theta^{n+1}\ra \\
  & + \la \partial_{\rho\rho}\hat\psi_{vex}^{n+1}(\rho^{n+1}-\hat\rho^{n+1}) + \partial_{\rho\rho}\hat\psi_{cav}^{n,n+1,n}(\rho^{n}-\hat\rho^{n})  + \partial_{\rho\eta}\hat\psi_{vex}^{n+1}(\eta^{n+1}-\hat\eta^{n+1}) + \partial_{\rho\eta}\hat\psi_{cav}^{n,n+1,n}(\eta^{n}-\hat\eta^{n}) \\
& \quad - \partial_{\rho\rho}\hat\psi_{cav}^{\hat\omega_2}d^{n+1}(\rho-\hat\rho)\ra - \partial_{\rho\eta}\tilde\psi_{cav}^{\hat\omega_2}d^{n+1}(\eta-\hat\eta)  - \partial_{\rho\rho}\psi^{\hat\omega_4}(\rho^n-\hat\rho^n) - \partial_{\rho\eta}\psi^{\hat\omega_5}(\eta^n-\hat\eta^n) , d^{n+1}\hat\rho\ra \\
& + \la  \partial_{\eta\rho}\hat\psi_{vex}^{n+1}(\rho^{n+1}-\hat\rho^{n+1}) + \partial_{\eta\rho}\hat\psi_{cav}^{n,n+1,n}(\rho^{n}-\hat\rho^{n})  + \partial_{\eta\eta}\hat\psi_{vex}^{n+1}(\eta^{n+1}-\hat\eta^{n+1}) + \partial_{\eta\eta}\hat\psi_{cav}^{n,n+1,n}(\eta^{n}-\hat\eta^{n}) \\
& \quad- \partial_{\rho\eta}\tilde\psi_{cav}^{\hat\omega_2}d^{n+1}(\rho-\hat\rho)\ra - \partial_{\eta\eta}\tilde\psi_{cav}^{\hat\omega_2}d^{n+1}(\eta-\hat\eta)  - \partial_{\rho\eta}\psi^{\hat\omega_4}(\rho^n-\hat\rho^n) - \partial_{\eta\eta}\psi^{\hat\omega_5}(\eta^n-\hat\eta^n)  , d^{n+1}\hat\eta\ra \\
 =&(a) + \ldots + (f).
\end{align*}
We consider the last two inner products $(e), (f)$ and by adding a suitable zero, we find
\begin{align*}
 &(e) + (f) \\
 =& \la \partial_{\rho\rho}\hat\psi_{vex}^{n+1}(\rho^{n+1}-\hat\rho^{n+1}) + \partial_{\rho\rho}\hat\psi_{cav}^{n,n+1,n}(\rho^{n}-\hat\rho^{n}) + \partial_{\rho\eta}\psi_{vex}^{n+1}(\eta^{n+1}-\hat\eta^{n+1}) + \partial_{\rho\eta}\hat\psi_{cav}^{n,n+1,n}(\eta^{n}-\hat\eta^{n}) \\
& \quad - \partial_{\rho\rho}\hat\psi_{cav}^{\hat\omega_2}d^{n+1}(\rho-\hat\rho)\ra - \partial_{\rho\eta}\hat\psi_{cav}^{\hat\omega_2}d^{n+1}(\eta-\hat\eta)  - \partial_{\rho\rho}\hat\psi^{\hat\omega_4}(\rho^n-\hat\rho^n) - \partial_{\rho\eta}\hat\psi^{\hat\omega_5}(\eta^n-\hat\eta^n) , d^{n+1}\hat\rho\ra \\
 &+ \la  \partial_{\eta\rho}\psi_{vex}^{n+1}(\rho^{n+1}-\hat\rho^{n+1}) + \partial_{\eta\rho}\psi_{cav}^{n,n+1,n}(\rho^{n}-\hat\rho^{n}) + \partial_{\eta\eta}\psi_{vex}^{n+1}(\eta^{n+1}-\hat\eta^{n+1}) + \partial_{\eta\eta}\psi_{cav}^{n,n+1,n}(\eta^{n}-\hat\eta^{n}) \\
& \quad- \partial_{\rho\eta}\psi_{cav}^{\hat\omega_2}d^{n+1}(\rho-\hat\rho)\ra - \partial_{\eta\eta}\psi_{cav}^{\hat\omega_2}d^{n+1}(\eta-\hat\eta) - \partial_{\rho\eta}\hat\psi^{\hat\omega_4}(\rho^n-\hat\rho^n) - \partial_{\eta\eta}\hat\psi^{\hat\omega_5}(\eta^n-\hat\eta^n)  , d^{n+1}\hat\eta\ra \\
=&  \la \partial_{\rho\rho}\hat\psi_{vex}^{n+1}(\rho-\hat\rho) + \partial_{\rho\eta}\hat\psi_{vex}^{n+1}d^{n+1}(\eta-\hat\eta) + \Big[ \partial_{\rho\rho}\hat\psi_{vex}^{n+1} + \partial_{\rho\rho}\hat\psi_{cav}^{n,n+1,n} - \partial_{\rho\rho}\hat\psi^{\hat\omega_4} \Big](\rho^{n}-\hat\rho^{n})  \\
& \quad + \Big[ \partial_{\rho\eta}\hat\psi_{vex}^{n+1} + \partial_{\rho\eta}\hat\psi_{cav}^{n,n+1,n} - \partial_{\rho\eta}\hat\psi^{\hat\omega_5} \Big](\eta^{n}-\hat\eta^{n})  - \partial_{\rho\rho}\hat\psi_{cav}^{\hat\omega_2}d^{n+1}(\rho-\hat\rho)\ra - \partial_{\rho\eta}\hat\psi_{cav}^{\hat\omega_2}d^{n+1}(\eta-\hat\eta) , d^{n+1}\hat\rho \ra \\
&+\la \partial_{\rho\eta}\hat\psi_{vex}^{n+1}d^{n+1}(\rho-\hat\rho) + \partial_{\eta\eta}\hat\psi_{vex}^{n+1}d^{n+1}(\eta-\hat\eta) + \Big[ \partial_{\rho\eta}\hat\psi_{vex}^{n+1} + \partial_{\rho\eta}\hat\psi_{cav}^{n,n+1,n} - \partial_{\rho\eta}\hat\psi^{\hat\omega_4} \Big](\rho^{n}-\hat\rho^{n})  \\
& \quad + \Big[ \partial_{\eta\eta}\hat\psi_{vex}^{n+1} + \partial_{\eta\eta}\hat\psi_{cav}^{n,n+1,n} - \partial_{\eta\eta}\hat\psi^{\hat\omega_5} \Big](\eta^{n}-\hat\eta^{n})  - \partial_{\eta\rho}\hat\psi_{cav}^{\hat\omega_2}d^{n+1}(\rho-\hat\rho)\ra - \partial_{\eta\eta}\hat\psi_{cav}^{\hat\omega_2}d^{n+1}(\eta-\hat\eta) , d^{n+1}\hat\eta \ra \\
\leq &\la (H^{vex,n+1}_{\ww}-H^{cav,\hat\omega_2}_{\ww})d^{n+1}(\ww-\hat\ww),d^{n+1}\hat\ww \ra  \\
& +\tau(\norm{\rho^n-\hat\rho^n}_0 + \norm{\eta^n-\hat\eta^n}_0)\norm{d^{n+1}_\tau\hat\rho}_{0,\infty}(\norm{\hat\u^{n+1} - \omega_4}_{0,\infty} + \norm{\hat\u^n - \omega_4}_{0,\infty}) \\
&+ \tau(\norm{\rho^n-\hat\rho^n}_0 +\norm{\eta^n-\hat\eta^n}_0)\norm{d^{n+1}_\tau\hat\eta}_{0,\infty}(\norm{\hat\u^{n+1} - \omega_5}_{0,\infty} + \norm{\hat\u^{n} - \omega_5}_{0,\infty}).
\end{align*}
Here we used the notation $\mathbf{u}=(\rho,\theta,\eta)^\top$. Similarly, we estimate
\begin{align*}
 &(a) + \ldots +(d) = \\
 &-\la\partial_{\theta\theta}\hat\psi^nd^{n+1}(\theta-\hat\theta) + (\partial_{\theta\theta}\hat\psi^ {\hat\omega_1}-\partial_{\theta\theta}\hat\psi^n)(\theta^{n+1}-\hat\theta^{n+1}), d^{n+1} \hat\theta\ra \\    
 &+\la (\partial_{\rho\theta}\hat\psi^n-\partial_{\rho\theta}\hat\psi^{\hat\omega_4})(\rho^n-\hat\rho^n) + (\partial_{\eta\theta}\hat\psi^n-\partial_{\eta\theta}\hat\psi^{\hat\omega_5})(\eta^n-\hat\eta^n), d^{n+1}\hat\theta\ra  \\
  \leq& -\la \partial_{\theta\theta}\hat\psi^nd^{n+1}(\theta-\hat\theta), d^{n+1} \hat\theta\ra +\tau\norm{d_\tau^{n+1}\hat\theta}_{0,\infty}(\norm{\theta^{n+1}-\hat\theta^{n+1}}_0^2\norm{\hat\u^n-\omega_1}_{0,\infty} \\ &\quad+ \norm{\rho^n-\hat\rho^n}_0^2\norm{\hat\u^n-\omega_4}_{0,\infty} 
 + \norm{\eta^n-\hat\eta^n}_0^2\norm{\hat\u^n-\omega_5}_{0,\infty}  ).
\end{align*}
In the following we will combine this with $(i),(ii)$ and after rearrangement we find
\begin{align*}
\mathcal{R} \leq& -\mathcal{D}_{num}^{n+1}(\rho-\hat\rho,\theta-\hat\theta,\eta-\hat\eta) +\la \partial_\theta\psi(\rho^n,\theta^n,\eta|\hat\rho^n,\hat\theta^n,\hat\eta^n), d^{n+1}\hat\theta\ra    \\
& + \la \partial_\theta\tilde\psi(\rho,\theta^{n+1},\eta|\hat\rho,\hat\theta^{n+1},\hat\eta^n), d^{n+1}\hat\rho\ra+ \la \partial_\theta\tilde\psi(\rho,\theta^{n+1},\eta|\hat\rho,\hat\theta^{n+1},\hat\eta^n, d^{n+1}\hat\eta\ra  \\
&+ \tfrac{1}{2}\la d^{n+1}\hat\theta, 2\partial_{\theta\theta}\psi^{n,\xi_1,n}d^{n+1}\theta-2\partial_{\theta\theta}\hat\psi^nd^{n+1}(\theta-\hat\theta)  - \partial_{\theta\theta}\hat\psi^{n,\hat\xi_1,n}d^{n+1}\hat\theta - \partial_{\theta\theta}\psi^{n,\xi_1,n}d^{n+1}\hat\theta  \ra\\
&+ \tfrac{1}{2}\la d^{n+1}\hat\ww,2H^{cav,\chi_1,{n+1},\chi_2}_{\ww}d^{n+1}\ww -2H^{cav,\hat\omega_2}_{\ww}d^{n+1}(\ww-\hat\ww)\\
& \quad- \hat H^{cav,\hat\chi_1,{n+1},\hat\chi_2}_{\ww}d^{n+1}\hat\ww - H^{cav,\chi_1,n+1,\chi_2}_{\ww})d^{n+1}\hat\ww\ra \\
  &- \tfrac{1}{2}\la d^{n+1}\hat\ww, 2H^{vex,\chi_3,n+1,\chi_4}_{\ww}d^{n+1}\ww - 2\hat H^{vex,n+1}_{\ww}d^{n+1}(\ww-\hat\ww) \\
  &\quad -\hat H^{vex(\hat\chi_3,n+1,\hat\chi_4}_{\ww}d^{n+1}\hat\ww - H^{vex,\chi_3,n+1,\chi_4}_{\ww}d^{n+1}\hat\ww\ra \\
 & +\tau\norm{d_\tau^{n+1}\hat\theta}_{0,\infty}(\norm{\theta^{n+1}-\hat\theta^{n+1}}_0^2\norm{\hat\u^n-\omega_1}_{0,\infty} \\ &\quad+ \norm{\rho^n-\hat\rho^n}_0^2\norm{\hat\u^n-\omega_4}_{0,\infty} 
 + \norm{\eta^n-\hat\eta^n}_0^2\norm{\hat\u^n-\omega_5}_{0,\infty}  ) \\
& +\tau(\norm{\rho^n-\hat\rho^n}_0 + \norm{\eta^n-\hat\eta^n}_0)\norm{d^{n+1}_\tau\hat\rho}_{0,\infty}(\norm{\hat\u^{n+1} - \omega_4}_{0,\infty} + \norm{\hat\u^n - \omega_4}_{0,\infty}) \\
&+ \tau(\norm{\rho^n-\hat\rho^n}_0 +\norm{\eta^n-\hat\eta^n}_0)\norm{d^{n+1}_\tau\hat\eta}_{0,\infty}(\norm{\hat\u^{n+1} - \omega_5}_{0,\infty} + \norm{\hat\u^{n} - \omega_5}_{0,\infty}) \\
   \leq &\mathcal{D}_{num}^{n+1}(\rho-\hat\rho,\theta-\hat\theta,\eta-\hat\eta)  +\la \partial_\theta\psi(\rho^n,\theta^n,\eta|\hat\rho^n,\hat\theta^n,\hat\eta^n), d^{n+1}\hat\theta\ra     \\
&+ \la \partial_\theta\tilde\psi(\rho,\theta^{n+1},\eta|\hat\rho,\hat\theta^{n+1},\hat\eta^n), d^{n+1}\hat\rho\ra+ \la \partial_\theta\tilde\psi(\rho,\theta^{n+1},\eta|\hat\rho,\hat\theta^{n+1},\hat\eta^n, d^{n+1}\hat\eta\ra  \\
&+ \tau\norm{d^{n+1}_\tau\hat\theta}_{0,\infty}\norm{d^{n+1}\theta}_0\norm{(\rho^n,\xi_1,\eta^n)^\top-\hat\u^n}_0 \\
  &+ \tau^2\norm{d^{n+1}_\tau\hat\theta}_{0,\infty}^2(\norm{(\hat\rho^n,\hat\xi_1,\hat\eta^n)^\top-\hat\u^n}_0+\norm{(\rho^n,\xi_1,\eta^n)^\top-\hat\u^n}_0) \\
& +\tau\norm{d^{n+1}_\tau\hat\ww}_{0,\infty}\norm{d^{n+1}\ww}_0\norm{(\chi_1,\chi_2))^\top-\hat\omega_2}_0 \\
  &+ \tau^2\norm{d^{n+1}_\tau\hat\ww}_{0,\infty}^2(\norm{(\chi_1,\chi_2)^\top-\hat\omega_2}_0+\norm{(\hat\chi_1,\hat\chi_2)^\top-\hat\omega_2}_0) \\
  &+ \tau\norm{d^{n+1}_\tau\hat\ww}_{0,\infty}^2\norm{(\chi_3,\theta^{n+1},\chi_4)^\top-\hat\u^{n+1}}_0 \\
  &+ \tau^2\norm{d^{n+1}_\tau\hat\ww}_{0,\infty}^2(\norm{(\chi_3,\theta^{n+1},\chi_4)^\top-\hat\u^{n+1}}_0+\norm{(\hat\chi_3,\hat\theta^{n+1},\hat\chi_4)^\top-\hat\u^{n+1}}_0) \\
   & +\tau\norm{d_\tau^{n+1}\hat\theta}_{0,\infty}(\norm{\theta^{n+1}-\hat\theta^{n+1}}_0^2\norm{\hat\theta^n-\hat\omega_1}_{0,\infty} \\ &\quad+ \norm{\rho^n-\hat\rho^n}_0^2\norm{\hat\u^n-\hat\omega_4}_{0,\infty} 
 + \norm{\eta^n-\hat\eta^n}_0^2\norm{\hat\u^n-\hat\omega_5}_{0,\infty}  ) \\
& +\tau(\norm{\rho^n-\hat\rho^n}_0 + \norm{\eta^n-\hat\eta^n}_0)\norm{d^{n+1}_\tau\hat\rho}_{0,\infty}(\norm{\hat\u^{n+1} - \hat\omega_4}_{0,\infty} + \norm{\hat\u^n - \hat\omega_4}_{0,\infty}) \\
&+ \tau(\norm{\rho^n-\hat\rho^n}_0 +\norm{\eta^n-\hat\eta^n}_0)\norm{d^{n+1}_\tau\hat\eta}_{0,\infty}(\norm{\hat\u^{n+1} - \hat\omega_5}_{0,\infty} + \norm{\hat\u^{n} - \hat\omega_5}_{0,\infty}) 
\end{align*}
with the numerical dissipation
\begin{align*}
 &\mathcal{D}_{num}^{n+1}(\rho-\hat\rho,\theta-\hat\theta,\eta-\hat\eta) \\
 & = \frac{\gamma_\rho}{2}\norm{\nabla d^{n+1}(\rho-\hat\rho)}_0^2 + \frac{\gamma_\eta}{2}\norm{\nabla d^{n+1}(\eta-\hat\eta)}_0^2 + \la \partial_{\theta\theta}\psi^{n,\xi_1,x}d^{n+1}(\theta-\hat\theta),d^{n+1}(\theta-\hat\theta) \ra \\
&- \la (H^{vex,\chi_3,{n+1},\chi_4}_{\ww} -H_{\ww}^{cav,\chi_1,{n+1},\chi_2}) d^{n+1}(\ww-\hat\ww),d^{n+1}(\ww-\hat\ww)\ra \leq 0 .
\end{align*}
We estimate all remaining norms, which by construction yields
\begin{align*}
&\norm{d^{n+1}\theta}_0 \leq \norm{\theta^{n+1}-\hat\theta^{n+1}}_0 + \norm{\theta^{n}-\hat\theta^{n}}_0 + \tau\norm{d^{n+1}_\tau\hat\theta}_0\\
&\norm{(\rho^n,\xi_1,\eta^n)^\top-\hat\u^n}_0 \leq \norm{\ww^n-\hat\ww^n}_0 + C\norm{\theta^n-\hat\theta^n}_0 + C\norm{\theta^{n+1}-\hat\theta^{n+1}}_0 + \tau\norm{d^{n+1}_\tau\hat\theta^{n+1}}_0  \\
  & \norm{(\hat\rho^n,\hat\xi_1,\hat\eta^n)^\top-\hat\u^n}_0 \leq  C\tau\norm{d^{n+1}_\tau\hat\theta}_0 \\
& \norm{d^{n+1}\ww}_0 \leq \norm{\ww^{n+1}-\hat\ww^{n+1}}_0 + \norm{\ww^{n}-\hat\ww^{n}}_0 + \tau\norm{d^{n+1}_\tau\hat\ww}_0\\
&\norm{(\chi_1,\chi_2))^\top-\hat\omega_2}_0 \leq C(\norm{\ww^{n}-\hat\ww^n}_0 + \norm{\ww^{n+1}-\hat\ww^{n+1}}_0 + \tau\norm{d^{n+1}_\tau\hat\ww}_0)\\
  &\norm{(\hat\chi_1,\hat\chi_2)^\top-\hat\omega_2}_0 \leq C\tau\norm{d_\tau^{n+1}\hat\ww}_0^2 \\
  &\norm{(\chi_3,\theta^{n+1},\chi_4)^\top-\hat\u^{n+1}}_0 \leq C(\norm{\ww^{n}-\hat\ww^n}_0 + \norm{\ww^{n+1}-\hat\ww^{n+1}}_0 + \tau\norm{d^{n+1}_\tau\hat\ww}_0)\\
  &\norm{(\hat\chi_3,\hat\theta^{n+1},\hat\chi_4)^\top-\hat\u^{n+1}}_0 \leq C\tau(\norm{d_\tau^{n+1}\hat\ww}_0 + \norm{d_\tau^{n+1}\hat\theta}_0)\\
  &\norm{\hat\theta^n-\hat\omega_1}_{0,\infty} \leq C\tau\norm{d^{n+1}_\tau \hat\theta^{n+1}}_{0,\infty} \\
  &\norm{\hat\u^{n}-\hat\omega_4}_{0,\infty} \leq C\tau(\norm{d_\tau^{n+1}\hat\ww}_{0,\infty} + \norm{d_\tau^{n+1}\hat\theta}_{0,\infty}) \\
 & \norm{\hat\u^{n}-\hat\omega_5}_{0,\infty} \leq C\tau(\norm{d_\tau^{n+1}\hat\ww}_{0,\infty} + \norm{d_\tau^{n+1}\hat\theta}_{0,\infty})\\
&\norm{\hat\u^{n+1} - \hat\omega_4}_{0,\infty} \leq C\tau(\norm{d_\tau^{n+1}\hat\ww}_{0,\infty} + \norm{d_\tau^{n+1}\hat\theta}_{0,\infty}) \\
& \norm{\hat\u^{n+1} - \hat\omega_5}_{0,\infty}   \leq C\tau(\norm{d_\tau^{n+1}\hat\ww}_{0,\infty} + \norm{d_\tau^{n+1}\hat\theta}_{0,\infty}).
\end{align*}

Using the above estimates yields
\begin{align*}
 \mathcal{R} \leq&  -\mathcal{D}_{num}^{n+1}(\rho-\hat\rho,\theta-\hat\theta,\eta-\hat\eta) +\la \partial_\theta\psi(\rho^n,\theta^n,\eta|\hat\rho^n,\hat\theta^n,\hat\eta^n), d^{n+1}\hat\theta\ra    \\
&+ \la \partial_\theta\tilde\psi(\rho,\theta^{n+1},\eta|\hat\rho,\hat\theta^{n+1},\hat\eta^n), d^{n+1}\hat\rho\ra + \la \partial_\theta\tilde\psi(\rho,\theta^{n+1},\eta|\hat\rho,\hat\theta^{n+1},\hat\eta^n, d^{n+1}\hat\eta\ra \\
&+ C\tau(\mathcal{W}(\rho,\theta,\eta|\hat\rho,\hat\theta,\hat\eta)^n + \mathcal{W}(\rho,\theta,\eta|\hat\rho,\hat\theta,\hat\eta)^{n+1}) + C\tau^3  .
\end{align*}

The remaining terms are discrete versions of the remainder in the relative entropy ansatz. However, they can be estimated in a straightforward manner such that
\begin{align*}
 \mathcal{R} &\leq  -\mathcal{D}_{num}^{n+1}(\rho-\hat\rho,\theta-\hat\theta,\eta-\hat\eta)+ C\tau(\mathcal{W}^n + \mathcal{W}^{n+1}) + C\tau^3  .
\end{align*}

\subsection{Dissipative contribution}\label{subs:dissip}

Let us consider now the dissipative contributions, i.e.
\begin{align*}
 \mathcal{D} =&  -\la d^{n+1}(e-\hat e), \theta^{n+1}-\hat \theta^{n+1}\ra  +\la d^{n+1}(\rho-\hat\rho),\mu_{\rho}^{n+1}-\hat\mu_{\rho}^{n+1} + r_{2}^{n+1}\ra \\
&+ \la d^{n+1}(\eta-\hat\eta),\mu_{\eta}^{n+1}-\hat\mu_{\eta}^{n+1}+ r_{5}^{n+1}\ra   
\end{align*}

Inserting into the $v_{1}=\mu_{\rho}^{n+1}-\hat\mu_{\rho}^{n+1} + r_{2}^{n+1},\xi=\theta^{n+1}-\hat \theta^{n+1}, w_{1}=\mu_{\eta}^{n+1}-\hat\mu_{\eta}^{n+1}+ r_{5}^{n+1}$ into \eqref{eq:pg1}--\eqref{eq:pg5} and \eqref{eq:ppg1}--\eqref{eq:ppg5} yields
\begin{align*}
  \mathcal{D} =&  -\la \LL_{11}\nabla (\mu_{\rho}^{n+1}-\hat\mu_{\rho}^{n+1})- \LL_{12}\nabla(\theta^{n+1}-\hat\theta^{n+1})+ (\mu_{\eta}^{n+1}-\hat\mu_{\eta}^{n+1})\LL_{13},\nabla(\mu_{\rho}^{n+1}-\hat\mu_{\rho}^{n+1} + r_{2}^{n+1}) \ra \\
  &+\la \LL_{12}\nabla (\mu_{\rho}^{n+1}-\hat\mu_{\rho}^{n+1})- \LL_{22}\nabla(\theta^{n+1}-\hat\theta^{n+1})+ (\mu_{\eta}^{n+1}-\hat\mu_{\eta}^{n+1})\LL_{23},\nabla(\theta^{n+1}-\hat\theta^{n+1} \ra \\
  &-\la\LL_{13}\cdot\nabla(\mu_{\rho}^{n+1}-\hat\mu_{\rho}^{n+1})-\LL_{23}\cdot\nabla(\theta^{n+1}-\hat\theta^{n+1})+\LL_{33}(\mu_{\eta}^{n+1}-\hat\mu_{\eta}^{n+1}),\mu_{\eta}^{n+1}-\hat\mu_{\eta}^{n+1}+ r_{5}^{n+1}\ra \\
  &+ \la r_{1}^{n+1}, \mu_{\rho}^{n+1}-\hat\mu_{\rho}^{n+1} + r_{2}^{n+1}\ra - \la r_{3}^{n+1}, \theta_{h}^{n+1}-\hat\theta_{h}^{n+1}\ra + \la r_{4}^{n+1}, \mu_{\eta}^{n+1}-\hat\mu_{\eta}^{n+1} + r_{5}^{n+1}\ra \\
  \leq& -(1-2\delta)\tau\mathcal{D}_\LL(\mu_{\rho}^{n+1}-\hat\mu_{\rho}^{n+1},\theta^{n+1}-\hat\theta^{n+1},\mu_{\eta}^{n+1}-\hat\mu_{\eta}^{n+1}) + C\tau\la \tilde\psi_\rho-\tilde{\hat\psi}_\rho, 1\ra^2 + C\tau\norm{\theta^{n+1}-\hat\theta^{n+1}}_0^2   \\
  & +C(\LL)\tau(\norm{r_{1}^{n+1}}_{-1}^2 + \norm{r_{2}^{n+1}}_1^2+ \norm{r_{3}^{n+1}}_{-1}^2 + \norm{r_{4}^{n+1}}_0^2 + \norm{r_{5}^{n+1}}_0^2.  
\end{align*}

\subsection{Quadratic part:} \label{subs:quadpart}
Due to the non-convex nature, the relative entropy is stabilised by an $L^2$ term for $\rho-\hat\rho, \eta-\hat\eta$. The temporal change in time is easily computed as
\begin{align*}
 &\frac{\lambda}{2}(\norm{\rho^{n+1}-\hat\rho^{n+1}}_0^2 + \norm{\eta^{n+1}-\hat\eta^{n+1}}_0^2 - \norm{\rho^{n}-\hat\rho^{n}}_0^2 - \norm{\eta^{n}-\hat\eta^{n}}_0^2   ) \\
 =& \lambda\la d^{n+1}(\rho-\hat\rho),\rho^{n+1}-\hat\rho^{n+1} \ra + \lambda\la d^{n+1}(\eta-\hat\eta),\eta^{n+1}-\hat\eta^{n+1} \ra \\
 &- \frac{\lambda}{2}\norm{d^{n+1}(\rho-\hat\rho)}_0^2 - \frac{\lambda}{2}\norm{d^{n+1}(\eta-\hat\eta)}_0^2 \\
 =& (i) + (ii) + (iii) + (iv).
\end{align*}
For the first and second term we insert $v_{1}=\rho^{n+1}-\hat\rho^{n+1}$ into \eqref{eq:pg1}, \eqref{eq:ppg1} and $w_{1}=\eta^{n+1}-\hat\eta^{n+1}$ into \eqref{eq:pg4}, \eqref{eq:ppg4} which yields
\begin{align*}
 (i) + (ii) =& -\lambda\tau\la \LL_{11}\nabla (\mu_{\rho}^{n+1}-\hat\mu_{\rho}^{n+1})- \LL_{12}\nabla(\theta^{n+1}-\hat\theta^{n+1})+ (\mu_{\eta}^{n+1}-\hat\mu_{\eta}^{n+1})\LL_{13},\nabla(\rho^{n+1}-\hat\rho^{n+1}) \ra \\
 & -\lambda\tau\la\LL_{13}\cdot\nabla(\mu_{\rho}^{n+1}-\hat\mu_{\rho}^{n+1})-\LL_{23}\cdot\nabla(\theta^{n+1}-\hat\theta^{n+1})+\LL_{33}(\mu_{\eta}^{n+1}-\hat\mu_{\eta}^{n+1}),\eta^{n+1}-\hat\eta^{n+1}\ra \\
 & + \lambda\tau\la r_{1}^{n+1},\rho^{n+1}-\hat\rho^{n+1} \ra + \lambda\tau\la r_{4}^{n+1},\eta^{n+1}-\hat\eta^{n+1} \ra \\
 \leq & \delta\tau\mathcal{D}_\LL(\mu_{\rho}^{n+1}-\hat\mu_{\rho}^{n+1},\theta^{n+1}-\hat\theta^{n+1},\mu_{\eta}^{n+1}-\hat\mu_{\eta}^{n+1})  \\
 &+ C(\LL,\lambda)\tau(\norm{\rho^{n+1}-\hat\rho^{n+1}}_1^2 + \norm{\eta^{n+1}-\hat\eta^{n+1}}_0^2)+ C\tau(\norm{r_{1}^{n+1}}_{-1}^2 + \norm{r_{4}^{n+1}}_0^2).
\end{align*}

\subsection{Summation of all estimates}\label{subs:collection}
Here we will collect all estimates from above and recall Lemma \ref{eq:rel_ent_equiv}, i.e. that the $H^1$- norm of $\rho^{n+1}-\hat\rho^{n+1}$ and $\eta^{n+1}-\hat\eta^{n+1}$ can be bounded by the relative entropy.

Summing the above results together and setting $\delta=1/6$ we find
\begin{align}
 \mathcal{W}_{\lambda}(\rho,\theta,\eta|\hat\rho,\hat\theta,\eta)\vert_{t^n}^{t^{n+1}} &+ \frac{\tau}{2}\mathcal{D}_\LL(\mu_{\rho}^{n+1}-\hat\mu_{\rho}^{n+1},\theta^{n+1}-\hat\theta^{n+1},\mu_{\eta}^{n+1}-\hat\mu_{\eta}^{n+1}) \notag\\
 &+ \tilde{\mathcal{D}}_{num}^{n+1}(\rho-\hat\rho,\theta-\hat\theta,\eta-\hat\eta)\label{eq:relentfullcolla}\\
 \leq& C\tau\mathcal{W}_{\lambda}(\rho,\theta,\eta|\hat\rho,\hat\theta,\eta)^{n+1} + C\tau\mathcal{W}_{\lambda}(\rho,\theta,\eta|\hat\rho,\hat\theta,\eta)^{n} + C\tau^3 \notag\\
&+\tau C(\LL)(\norm{r_{1}^{n+1}}_{-1}^2 + \norm{r_{2}^{n+1}}_1^2+ \norm{r_{3}^{n+1}}_{-1}^2 + \norm{r_{4}^{n+1}}_0^2 + \norm{r_{5}^{n+1}}_0^2 \notag
\end{align}
with relative numerical dissipation
\begin{align*}
 \tilde{\mathcal{D}}_{num}^{n+1}(\rho-\hat\rho,\theta-\hat\theta,\eta-\hat\eta) :=&\mathcal{D}_{num}^{n+1}(\rho-\hat\rho,\theta-\hat\theta,\eta-\hat\eta) \\
 &+ \frac{\lambda}{2}\norm{d^{n+1}(\rho-\hat\rho)}_0^2 + \frac{\lambda}{2}\norm{d^{n+1}(\eta-\hat\eta)}_0^2 .
\end{align*}

The final results follow from the discrete Gronwall Lemma setting with $\tau$ sufficiently small.

\bibliographystyle{abbrv}	
\bibliography{lit.bib}

\end{document}